\def\Z{{\mathbb Z}}
\def\Q{{\mathbb Q}}
\def\C{{\mathbb C}}
\def\P{{\mathbb P}}
\def\H{{\mathbb H}}
\def\L{{\mathbb L}}
\def\V{{\mathbb V}}
\def\A{{\mathcal A}}
\def\cC{{\mathcal C}}
\def\cD{{\mathcal D}}
\def\cG{{\mathcal G}}
\def\cH{{\mathcal H}}
\def\M{{\mathcal M}}
\def\O{{\mathcal O}}
\def\cP{{\mathcal P}}
\def\T{{\mathcal T}}
\def\U{{\mathcal U}}
\def\cV{{\mathcal V}}
\def\X{{\mathcal X}}
\def\G{\Gamma}
\def\d{{\mathfrak d}}
\def\g{{\mathfrak g}}
\def\h{{\mathfrak h}}
\def\n{{\mathfrak n}}
\def\p{{\mathfrak p}}
\def\r{{\mathfrak r}}
\def\s{{\mathfrak s}}
\def\u{{\mathfrak u}}
\def\v{{\mathfrak v}}
\def\fX{{\mathfrak X}}
\def\etabar{{\overline{\eta}}}
\def\Ql{{\Q_\ell}}
\def\Zl{{\Z_\ell}}
\def\Gm{{\mathbb{G}_m}}
\def\Sp{{\mathrm{Sp}}}
\def\GSp{{\mathrm{GSp}}}
\def\arith{\mathrm{arith}}
\def\alg{\mathrm{alg}}
\def\geom{\mathrm{geom}}
\def\cts{\mathrm{cts}}
\def\orb{\mathrm{orb}}
\def\ab{\mathrm{ab}}
\def\et{\mathrm{\acute{e}t}}
\def\hyp{\mathrm{hyp}}
\def\D{\Delta}
\newcommand\id{\operatorname{id}}
\newcommand\ad{\operatorname{ad}}
\newcommand\Hom{\operatorname{Hom}}
\newcommand\Spec{\operatorname{Spec}}
\newcommand\Diff{\operatorname{Diff}}
\newcommand\Aut{\operatorname{Aut}}
\newcommand\Der{\operatorname{Der}}
\newcommand\Gr{\operatorname{Gr}}
\newcommand\Pic{\operatorname{Pic}}
\newtheorem{theorem}{Theorem}[section]
\newtheorem{lemma}[theorem]{Lemma}
\newtheorem{proposition}[theorem]{Proposition}
\newtheorem{corollary}[theorem]{Corollary}
\newtheorem{bigtheorem}{Theorem}
\newtheorem{bigcorollary}[bigtheorem]{Corollary}
\theoremstyle{definition}
\theoremstyle{remark}
\newtheorem{remark}[theorem]{Remark}
\begin{document}
 	
\title{Remarks on the sections of universal hyperelliptic curves }

\author{Tatsunari Watanabe}
\address{Mathematics Department, Embry-Riddle Aeronautical University, Prescott, AZ 86301}
\email{watanabt@erau.edu}

\maketitle
\begin{abstract}
 In this paper, we study the obstruction for the sections of the universal hyperelliptic curves of genus $g\geq 3$. 
The obstruction of our interest comes from the relative completion of the hyperelliptic mapping class groups and the Lie algebra of the unipotent completion of the fundamental group of the configuration space of a compact oriented surface. Using the obstruction, we prove that the Birman exact sequence for the hyperelliptic mapping class groups does not split for $g\geq 3$. 
\end{abstract}

\tableofcontents

\section{Introduction}
In \cite{EaKr}, Earle and Kra proved that the Teichm\"uller  curve over the Teichm\"uller space $\fX_{2,n}$ has exactly $2n+6$ holomorphic sections, consisting of $n$ tautological sections and their hyperelliptic conjugates and six Weierstrass  
sections. The hyperelliptic conjugates are of the form $J\circ s$, where $s$ is a tautological section and $J$ is the fiber-preserving involution whose restriction to each fiber of the Teichm\"uller curve is the hyperelliptic involution.\\
\indent Let $k$ be a field of characteristic zero. Assume that $2g-2+n >0$. The moduli stack of curves of type $(g, n)$ over $k$  is denoted by $\M_{g,n/k}$. Its hyperelliptic locus, denoted by $\cH_{g,n/k}$, is a closed smooth substack of $\M_{g,n/k}$. Let $\cC_{g,n/k}\to \M_{g,n/k}$ be the universal complete curve of type $(g,n)$ and  let $\pi:\cC_{\cH_{g,n/k}}\to \cH_{g,n/k}$ be the pullback of the universal curve to $\cH_{g,n/k}$.  We call it the universal complete hyperelliptic curve of type $(g,n)$, or simply, the universal hyperelliptic curve.   It admits the hyperelliptic involution $J$.    Denote the tautological sections of $\cC_{g,n/k}\to \M_{g,n/k}$ by $x_1, \ldots, x_n$, and the tautological sections of $\pi:\cC_{\cH_{g,n/k}}\to \cH_{g,n/k}$ induced by pulling back the sections $x_j$ are denoted by $s_1, \ldots, s_n$.  The universal hyperelliptic curve also admits the hyperelliptic conjugates $J\circ s_1, \ldots, J\circ s_n$ of the sections $s_j$.  In \cite{wat_sec}, it was shown that when $g\geq 3$ and $k$ is a field of characteristic zero with the image of the $\ell$-adic cyclotomic character $\chi_\ell: G_k\to \Z_\ell^\times$ is infinite, the sections of $\pi$ are exactly the tautological ones and their hyperelliptic conjugates.  Our first main result extends this fact to the universal complete hyperelliptic curve of type $(g,n)$ over $\C$. 
\begin{bigtheorem}\label{sections over C}
If $g\geq 3$ and $n\geq 0$, then the universal hyperelliptic curve $\pi: \cC_{\cH_{g,n/\C}}\to \cH_{g,n/\C}$ admits exactly $2n$ sections, consisting of $s_1, \ldots, s_n$ and $J\circ s_1, \ldots, J\circ s_n$. 
\end{bigtheorem}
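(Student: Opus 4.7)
The plan is to reduce the complex case to the result of \cite{wat_sec} via a spreading-out argument. The two families $s_1,\dots,s_n$ and $J\circ s_1,\dots,J\circ s_n$ are defined over $\Q$ and furnish $2n$ distinct sections of $\pi$, so the only content is ruling out further sections over $\C$.

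First, I would take a hypothetical section $s\colon \cH_{g,n/\C}\to \cC_{\cH_{g,n/\C}}$ and descend it to a finitely generated subfield of $\C$. Writing $\C=\varinjlim_\alpha K_\alpha$ as the filtered colimit of its finitely generated subfields $K_\alpha$, and invoking the fact that both $\cH_{g,n}$ and $\cC_{\cH_{g,n}}$ arise by base change from Deligne--Mumford stacks of finite presentation over $\Spec\Z$, the standard limit formalism for stacks produces a finitely generated subfield $K\subset\C$ over which $s$ is defined. After enlarging $K$ if necessary, the $2$-isomorphism $\pi\circ s\cong\id$ also descends, giving an honest section $s_K\colon \cH_{g,n/K}\to \cC_{\cH_{g,n/K}}$.

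Next, I would verify that any such $K$ meets the hypothesis of \cite{wat_sec}: being finitely generated over $\Q$, the field $K$ contains only finitely many $\ell$-power roots of unity, so $K(\mu_{\ell^\infty})/K$ is an infinite extension, whence the image of $\chi_\ell\colon G_K\to\Z_\ell^\times$ is infinite. The cited theorem then forces $s_K$ to coincide with one of $s_1,\dots,s_n,J\circ s_1,\dots,J\circ s_n$ over $K$, and base changing back to $\C$ identifies $s$ itself with one of these $2n$ sections. In particular, when $n=0$ the theorem is the content of the cited result applied to $K$, namely that $\pi$ admits no section at all.

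The main technical obstacle is the spreading-out step for stacks: one must simultaneously descend the $1$-morphism $s$ and the $2$-isomorphism witnessing that $\pi\circ s\cong\id$, and ensure that the descended $s_K$ is genuinely a section in the stacky sense. This is not entirely automatic because of the distinction between equality and $2$-isomorphism of morphisms, but it follows from the general machinery of limits of algebraic stacks once one passes to smooth atlases. Beyond this, the argument is essentially formal, with the deep content packaged into \cite{wat_sec}.
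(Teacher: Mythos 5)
Your argument is correct, but it is not the paper's route; it is a genuinely different and more economical reduction. You descend the section directly, as a $1$-morphism of Deligne--Mumford stacks together with the $2$-isomorphism $\pi\circ s\cong\id$, to a finitely generated subfield $K\subset\C$, and then quote the theorem of \cite{wat_sec} verbatim over $K$ (where, as you note, $\chi_\ell$ automatically has infinite image). The paper instead sidesteps the limit formalism for stacks: it pulls the section back to the level-$m$ family $\cC_{\cH_{g,n/\C}}[m]\to\cH_{g,n/\C}[m]$ with $m\geq 3$, where source and target are smooth quasi-projective varieties, and spreads out there in the elementary scheme-theoretic sense. The price is that the descended object is only a section of the level-$m$ family, to which the \emph{statement} of \cite[Thm.~1]{wat_sec} does not directly apply, so the paper must re-run the endgame of that proof (the difference $s_1-x$ is torsion in the relative Jacobian, and a nonzero torsion difference would produce an impossible morphism to $\cH_{g,2/k}$); to get that argument started it must first pin down the Abel--Jacobi class $\kappa_x$ as some $\pm\kappa_j^\hyp$, which is precisely what the Hodge-theoretic relative completion input (Theorem \ref{weight -1 action} and Proposition \ref{geo sect induce lie sect}) provides over $\C$ in place of the arithmetic weighted completion of \cite{wat_sec}. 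The two proofs thus trade technical burdens: yours leans on the $2$-categorical limit formalism for finitely presented algebraic stacks, which is standard but must be invoked honestly, while the paper's leans on the relative-completion computation --- machinery it needs anyway for Theorems 2 and 3, which do not follow from \cite{wat_sec}. Two small points to tidy: finite presentation over $\Q$ (not $\Z$) is what you actually use, since $\cH_{g,n/k}$ is only defined here in characteristic zero; and you should record that the $2n$ listed sections are pairwise distinct, e.g.\ $s_j\neq J\circ s_j$ because the tautological section does not land in the Weierstrass locus.
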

A result \cite[Thm.~10.3 (a)]{EaKr} of Earle and Kra states that the Teichm\"uller curve over $\fX_{2, n}$ with $n\geq 1$ has exactly $n-1$ holomorphic sections disjoint from $s_1$. Together with Theorem \ref{sections over C}, we extend this result to the universal complete hyperelliptic curve $\pi: \cC_{\cH_{g,n/\C}}\to \cH_{g,n/\C}$.
\begin{bigtheorem}
If $g\geq 3$ and $n\geq 0$, then the sections of $\pi: \cC_{\cH_{g,n/\C}}\to \cH_{g,n/\C}$ that are disjoint from $s_1$ are exactly the tautological sections $s_2, \ldots, s_n$. 
\end{bigtheorem}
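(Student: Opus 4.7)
The plan is to deduce this theorem directly from Theorem~\ref{sections over C} by a short geometric analysis. By Theorem~\ref{sections over C}, every section of $\pi$ is one of the $2n$ sections $s_1,\dots,s_n,J\circ s_1,\dots,J\circ s_n$, so it suffices to sort which of them are disjoint from $s_1$. The tautological sections $s_2,\dots,s_n$ are disjoint from $s_1$ because the marked points of an $n$-pointed smooth curve are, by definition, pairwise distinct. What remains is to show that each of the hyperelliptic conjugates $J\circ s_j$ \emph{does} meet $s_1$ somewhere on $\cH_{g,n/\C}$.

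For each $1\leq j\leq n$ I would do this by exhibiting a single complex point of the incidence locus
\[
Z_j := \bigl\{\,x\in \cH_{g,n/\C} \;:\; (J\circ s_j)(x) = s_1(x)\,\bigr\}.
\]
When $j=1$, $Z_1$ is the locus of pointed hyperelliptic curves $(C;p_1,\dots,p_n)$ for which $p_1$ is a Weierstrass point; since every hyperelliptic curve of genus $g\geq 2$ carries $2g+2$ Weierstrass points, one can take $p_1$ to be one of them and then pick any distinct $p_2,\dots,p_n\in C$, producing a point of $Z_1$. When $j\neq 1$, $Z_j$ is the locus where $p_1 = J(p_j)$; starting from any hyperelliptic double cover $C\to\P^1$ of genus $g\geq 3$, one picks a non-Weierstrass fiber $\{q,J(q)\}$, sets $p_1 = q$ and $p_j = J(q)$, and then chooses the remaining marked points so that all images in $\P^1$ are pairwise distinct, which is possible because $C$ is infinite. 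Either way $Z_j\neq\emptyset$, so $J\circ s_j$ is not disjoint from $s_1$.

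The main (rather mild) obstacle in this argument is simply producing such pointed hyperelliptic curves for each $j$; once Theorem~\ref{sections over C} is in hand, the rest is an elementary geometric verification. The substantive content of the theorem lies entirely in Theorem~\ref{sections over C}, without which one would have no a priori control over nontautological sections that might happen to be disjoint from $s_1$.
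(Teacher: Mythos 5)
Your proposal is correct, and it replaces the paper's key step with a genuinely different, more elementary argument. Both you and the paper first invoke Theorem~\ref{sections over C} to reduce to deciding which of the $2n$ explicit sections $s_1,\dots,s_n, J\circ s_1,\dots,J\circ s_n$ are disjoint from $s_1$. The paper then identifies sections disjoint from $s_1$ with sections of the one-punctured family $\pi'$ and applies Proposition~\ref{sections for one puncture case}, whose proof is a bracket computation in $\Gr^W_{-2}\h^\geom_{\cC'_{g,n}}$ showing that the class $\Theta_{01}$ generating $\Q(1)_{01}$ obstructs the lifting of $\zeta_1^\pm$ and $\zeta_j^-$ to $\beta'_n$; only $\zeta_2^+,\dots,\zeta_n^+$ survive. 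You instead settle disjointness by exhibiting explicit points of the incidence loci: $J\circ s_1$ meets $s_1$ exactly over the (nonempty) locus where $p_1$ is a Weierstrass point, and $J\circ s_j$ for $j\neq 1$ meets $s_1$ over the (nonempty) locus $p_1=J(p_j)$, while $s_2,\dots,s_n$ are disjoint from $s_1$ by the definition of the moduli space. This is valid --- the marked points of a curve of type $(g,n)$ are required to be pairwise distinct but are otherwise unconstrained, so both loci are indeed nonempty --- and it is considerably shorter than the Lie-theoretic route. What your argument does not deliver is the finer statement of Proposition~\ref{sections for one puncture case} itself (which sections of $\bar\beta_n$ lift to $\beta'_n$), a computation whose technique runs parallel to the one needed for Theorem~\ref{main seq does not split}, where no such elementary shortcut is available. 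You are also right that all the substantive content sits in Theorem~\ref{sections over C}.
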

This result should be considered as a partial extension of Earle and Kra's work, since our method does not work for the universal hyperelliptic curve over the hyperelliptic locus of $\fX_{g,n}$.  While the approach of Earle and Kra is complex analytic by nature, our approach is more algebraic and topological.\\
\indent Our third result concerns the punctured case. The universal punctured curve of type $(g,n)$ is given by $\M_{g,n+1/k}\to \M_{g,n/k}$, which is the restriction of the universal complete curve to the complement of the tautological sections $x_j$. Pulling back  $\M_{g,n+1/k}\to \M_{g,n/k}$ to $\cH_{g,n/k}$, we obtain the universal punctured hyperelliptic curve $\pi^o: \cH_{g,n+1/k}\to \cH_{g,n/k}$ of type $(g,n)$. The fiber of $\pi^o$ over a geometric point $[C; \bar x_1, \ldots, \bar x_n]$ is the $n$-punctured curve, denoted by $C^o$, that is the complement of the $n$ marked points in $C$. 
 Associated to the curve  $\pi^o$, there is the homotopy exact sequence of algebraic fundamental groups, 
\begin{equation}\label{main punct seq for hyp}
1\to\pi_1^\alg(C^o)\to \pi_1^\alg(\cH_{g,n+1/\C})\to\pi_1^\alg(\cH_{g,n/\C})\to 1. 
\end{equation}
\begin{bigtheorem}\label{main seq does not split}
If $g\geq 3$ and $n\geq 0$, then the sequence (\ref{main punct seq for hyp}) does not split. 
\end{bigtheorem}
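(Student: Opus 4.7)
The plan is to derive Theorem \ref{main seq does not split} from Theorem \ref{sections over C} by identifying splittings of the algebraic Birman sequence (\ref{main punct seq for hyp}) with algebraic sections of the punctured universal hyperelliptic curve $\pi^o:\cH_{g,n+1/\C}\to\cH_{g,n/\C}$, and then using Theorem \ref{sections over C} to rule out any such sections.

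First I would show that $\pi^o$ has no algebraic sections. Composing a section $\tau$ of $\pi^o$ with the open immersion $\cH_{g,n+1/\C}\hookrightarrow \cC_{\cH_{g,n/\C}}$ produces a section of the complete universal hyperelliptic curve $\pi$, which by Theorem \ref{sections over C} must equal some $s_i$ or some $J\circ s_i$. For $n=0$ we are done immediately since the theorem gives $2n=0$ sections. For $n\geq 1$ each tautological section $s_i$ is precisely one of the divisors that have been removed and hence does not factor through $\cH_{g,n+1/\C}$, while the hyperelliptic conjugate $J\circ s_i$ meets $s_i$ along the nonempty closed substack of $\cH_{g,n/\C}$ over which $s_i$ passes through a Weierstrass point of the fiber (since the involution $J$ is fiberwise the hyperelliptic involution with Weierstrass fixed locus of codimension one in the total space); thus $J\circ s_i$ also fails to land in $\pi^o$. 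Therefore $\pi^o$ admits no sections.

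Next I would translate a hypothetical splitting $\sigma$ of (\ref{main punct seq for hyp}) into such a section, completing the contradiction. Over $\C$ the stacks $\cH_{g,n/\C}$ and $\cH_{g,n+1/\C}$ are orbifold $K(\pi,1)$'s whose orbifold fundamental groups are the discrete hyperelliptic mapping class groups and whose algebraic fundamental groups are their profinite completions. At the discrete level the correspondence between splittings of Birman-type exact sequences and sections up to conjugation is classical for fibrations of aspherical orbifolds. To descend a profinite splitting to a discrete one, I would appeal to residual finiteness and goodness in Serre's sense of hyperelliptic mapping class groups, together with a conjugacy-class analysis showing that profinite completion does not create new splittings.

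The main obstacle is this profinite-to-discrete descent: a splitting of a completed sequence need not come from a splitting of the original, so a careful obstruction or rigidity argument is needed to bridge the gap. An attractive alternative, closer to the language of the abstract, is to avoid geometric sections entirely and compute the extension class of (\ref{main punct seq for hyp}) directly in a cohomology group attached to the relative completion of the hyperelliptic mapping class group, using the weight-filtered Lie algebra of the unipotent completion of $\pi_1(C^o)$ to witness non-vanishing of this class. This reroutes the proof through the obstruction machinery foreshadowed in the abstract and gives nonsplitting without ever producing a geometric section.
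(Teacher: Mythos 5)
There is a genuine gap, and it sits exactly where you flagged "the main obstacle," but the problem is worse than a profinite-to-discrete descent issue. Your main route reduces Theorem \ref{main seq does not split} to Theorem \ref{sections over C} by converting a splitting of the sequence (\ref{main punct seq for hyp}) into a section of $\pi^o$ and hence of $\pi$. But Theorem \ref{sections over C} classifies \emph{algebraic} sections of $\pi:\cC_{\cH_{g,n/\C}}\to\cH_{g,n/\C}$, whereas a splitting of the homotopy exact sequence of (orbifold or algebraic) fundamental groups only produces, at best, a \emph{homotopy class of continuous sections} of the corresponding fibration of aspherical orbifolds. There is no mechanism in the paper, and none supplied in your proposal, that promotes such a homotopy-theoretic section to an algebraic one; this promotion is a section-conjecture-type statement and is precisely the hard content one would need. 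Residual finiteness and goodness of the hyperelliptic mapping class groups, even if they let you descend a profinite splitting to a discrete one (which is itself not automatic and not what residual finiteness gives), still leave you with only a group-theoretic splitting, which Theorem \ref{sections over C} cannot see. Your geometric observations about $s_i$ and $J\circ s_i$ meeting over the Weierstrass locus are correct but moot, since the section you would need to rule out is never shown to be one of these.

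The "attractive alternative" in your last paragraph is in fact the paper's actual proof, but you do not carry it out, and it is not a routine computation. The paper assumes a section $\gamma$ of $\widehat{\pi^o_\ast}$, applies continuous relative completion to get a Lie algebra section $d\gamma_{\Ql}$ of $\v^\geom_{g,n+1/\Ql}\to\v^\geom_{g,n/\Ql}$, and then must work to show that $d\gamma_{\Ql}$ induces an $\Sp(H_{\Ql})$-equivariant \emph{graded} Lie algebra section of $\Gr^W_\bullet d\pi^o_{\ast\Ql}$; this uses Lemma \ref{weight fil on ab splits} (the canonical splitting $H_1(\v^\geom_{g,n})\cong H_1(\p_{g,n})\oplus H_1(\v^\geom_g)$, resting on the purity of $H_1(\v^\geom_g)$ in weight $-2$ from Proposition \ref{pure weight -2}) together with Proposition \ref{tanaka's computation} to see that $d\gamma^\ab_{\Ql}$ is automatically compatible with the weight grading. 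The contradiction then comes from Proposition \ref{no section for punctured family}, an explicit bracket computation with the relations $\Theta_i+\frac{1}{g}\sum_{j\neq i}\Theta_{ij}=0$ in $\Gr^W_{-2}$ showing that no equivariant graded section of $\beta^o_n$ exists. None of these steps is present in your proposal, so as written it does not constitute a proof.
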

Let $S$ be a compact oriented surface of genus $g$ with $n$ marked points. 
Denote the hyperelliptic mapping class group of $S$ fixing the $n$ marked points pointwise by $\Delta_{g,n}$(defined in \S \ref{def of hyp map grps}). The punctured surface $S_{g,n}$ is the complement of the marked points in $S$ and its topological fundamental group is denoted by $\pi_1(S_{g,n})$. As an immediate consequence of Theorem \ref{main seq does not split}, we obtain the result that an analogue of the Birman exact sequence for the hyperelliptic mapping class groups does not split. 
\begin{bigcorollary}\label{hyp birman seq not split}
If $g\geq 3$ and $n\geq 0$, then the sequence
$$
1\to \pi_1(S_{g,n})\to \Delta_{g,n+1}\to \Delta_{g,n}\to 1
$$
does not split. 
\end{bigcorollary}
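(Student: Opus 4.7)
The plan is to deduce Corollary \ref{hyp birman seq not split} from Theorem \ref{main seq does not split} by passing to profinite completions. First I would identify the algebraic fundamental groups appearing in (\ref{main punct seq for hyp}) with profinite completions of the corresponding topological fundamental groups: since $\cH_{g,n/\C}$ and $\cH_{g,n+1/\C}$ are smooth Deligne--Mumford stacks over $\C$, their algebraic fundamental groups are canonically the profinite completions of their orbifold fundamental groups, and the latter are known to be the hyperelliptic mapping class groups $\Delta_{g,n}$ and $\Delta_{g,n+1}$, respectively. Analogously, the fiber $C^o$ of $\pi^o$ over a complex point is an $n$-punctured genus $g$ Riemann surface, so $\pi_1^\alg(C^o) \cong \widehat{\pi_1(S_{g,n})}$.

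Next I would verify that the topological Birman exact sequence and the algebraic homotopy sequence (\ref{main punct seq for hyp}) are compatible, in the sense that applying profinite completion termwise to the Birman sequence recovers (\ref{main punct seq for hyp}) together with its structure maps. This is the standard comparison between the topological homotopy sequence associated to the analytic fibration $\pi^o$ (viewed as a map of complex orbifolds) and the algebraic one, which follows from the functoriality of the comparison between the topological and algebraic fundamental groups of smooth DM stacks over $\C$.

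Finally, the argument concludes by functoriality of profinite completion. If the Birman sequence admitted a group-theoretic section $s: \Delta_{g,n} \to \Delta_{g,n+1}$, then the induced continuous homomorphism $\hat s: \widehat{\Delta_{g,n}} \to \widehat{\Delta_{g,n+1}}$ would be a continuous section of (\ref{main punct seq for hyp}), contradicting Theorem \ref{main seq does not split}. There is no substantive obstacle here once Theorem \ref{main seq does not split} is in hand; the only points deserving care are the identification of $\pi_1^\alg$ of the hyperelliptic loci with the profinite completions of the hyperelliptic mapping class groups and the verification that the two short exact sequences match.
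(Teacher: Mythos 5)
Your proposal is correct and follows essentially the same route as the paper: identify the profinite completion of the hyperelliptic Birman sequence with the algebraic homotopy sequence (\ref{main punct seq for hyp}), and note that a discrete section would induce a continuous section of the profinite sequence, contradicting Theorem \ref{main seq does not split}. No substantive difference from the paper's argument.
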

\indent This paper is a sequel to \cite{wat_sec}, where the main tool used is the weighted completion of profinite groups developed by Hain and Matsumoto in \cite{HaMa_wcomp}. In order to study the universal hyperelliptic curve over $\C$ or an algebraically closed field $k\subset \C$, we use the relative completion of the hyperelliptic mapping class groups in this paper. The Lie algebra of the relative completion admits a canonical mixed Hodge structure (MHS)  and hence a natural weight filtration with weights $\leq 0$. The key obstruction for the sections of the universal hyperelliptic curves is built in the Lie algebra structure of the 4-step graded Lie algebras constructed from the universal hyperelliptic curve via relative completion (defined in \S \ref{proofs}). 
\section{Fundamental groups}
In this paper, three different types of fundamental groups are considered. For a connected topological space $X$ with a base point $x\in X$, denote the topological fundamental group of $X$ by $\pi_1(X, x)$. More generally, for a connected orbifold $\O$ with a base point $x$ in the underlying topological space $X_\O$,  denote the orbifold fundamental group of $\O$ by $\pi_1(\O, x)$ (see \cite{thurston} for the definition). Let $k$ be a field. For a geometrically connected scheme or more generally a connected DM (Deligne-Mumford) stack $\X_{/k}$ over $k$, denote the algebraic fundamental group of $\X_{/k}$ with a geometric point $\bar x$ by $\pi_1^\alg(\X_{/k}, \bar x)$ (see \cite{noo_alg_stacks} for the definition). Denote the profinite completion of a group $G$ by $\widehat G$. By the Riemann existence theorem extended for stacks \cite[Thm.~20.4, Cor.~20.5]{noo_top_stacks},  if $\X_{/\C}$ is a connected algebraic stack that is locally of finite type over $\C$, there is an isomorphism:
$$
\pi_1^\alg(\X_{/\C}) \cong \widehat{\pi_1(\X)},
$$
where $\X$ is the underlying orbifold of $\X_{/\C}$. For a field $k\subset \C$, let $\bar k$ be the algebraic closure of $k$ in $\C$. Then the base change from $\bar k$ to $\C$ gives an isomorphism 
$$
\pi_1^\alg(\X_{\bar k})\cong \pi_1^\alg(\X_\C).
$$
\subsection{Surface groups and Pure braid groups}
Assume that $g \geq1$. Let $S$ be a compact oriented surface of genus $g$. Fix a base point $x\in S$ and let $\alpha_1, \beta_1, \ldots, \alpha_g, \beta_g$ form the standard generators for  $\pi_1(S, x)$. 
Denote the $n$-punctured surface obtained from $S$ by removing $n$ distinct points of $S$ by $S_{g,n}$. Fix a base point $y\in S_{g,n}$.  The fundamental group $\pi_1(S_{g,n}, y)$ has a presentation
$$
\pi_1(S_{g,n},y) \cong \langle \alpha_1, \beta_1,\ldots, \alpha_g, \beta_g, \gamma_1, \ldots, \gamma_n|
[\alpha_1, \beta_1][\alpha_2, \beta_2]\cdots[\alpha_g, \beta_g]\gamma_1\cdots\gamma_n=1\rangle,
$$
where  the $\gamma_i$ are the loops around the punctures and $[\alpha_j, \beta_j]=\alpha_j\beta_j\alpha_j^{-1}\beta_j^{-1}$. \\
\indent The $n$th ordered configuration space of $S$ is defined to be
$$
F^n(S) = S^n - \{(x_1, x_2, \ldots, x_n)\in S^n| x_i = x_j\,\,\text{ for }i\not= j\}.
$$
Fix a base point $z\in F^n(S)$. The {\it $n$-strand pure braid group} on $S$  is the topological fundamental group
$\pi_1(F^n(S), z)$.

\section{Mapping class groups and hyperelliptic mapping class groups}\label{def of hyp map grps}
Let $S$ be a smooth compact oriented surface of genus $g$ and $P$ a subset of $S$ consisting of $n$ distinct points.  Assume that $2g -2 + n >0$. Define the mapping class group $\G_{g,n}$ to be the group of isotopy classes of  orientation-preserving diffeomorphisms of $S$ that fix $P$ pointwise:
$$
\G_{g,n} = \pi_0\Diff^+(S, P).
$$
When $n =0$, we denote $\G_{g,0}$ by $\G_g$. \\
\indent Fix a hyperelliptic involution $\sigma$ of $S$. It is an orientation-preserving diffeomorphism of $S$ of order $2$ with $2g+2$ fixed points. The hyperelliptic mapping class group $\D_g$ is defined as the group of isotopy classes of orientation-preserving diffeomorphisms that commute with $\sigma$: 
$$
\D_g = \pi_0(\text{centralizer of $\sigma$ in $\Diff^+S$}).
$$
By the result \cite{birman-hilden} of Birman and Hilden,  the natural homomorphism $\D_g\to \G_g$ is injective and its image is the centralizer of the isotopy class of $\sigma$ in $\G_g$. Therefore, $\D_g$ is identified with its image in $\G_g$ in this paper. Forgetting the $n$ marked points of $(S, P)$ yields a natural projection $\G_{g,n}\to\G_g$. Define the hyperelliptic mapping class group $\D_{g,n}$ of type $(g,n)$  to be the fiber product
$$
\D_{g,n} = \D_g\times_{\G_g}\G_{g,n}.
$$
We have the natural inclusion $\D_{g,n}\to \G_{g,n}$. 

\subsection{Symplectic representation} \label{monodromy}
\textcolor{black}{For $A=\Z$ and $\Z/m\Z$ with an integer $m\geq 0$, denote $H_1(S, A)$  by $H_A$.} It is equipped with an algebraic intersection pairing $\langle~, ~\rangle: H_A^{\otimes 2}\to A$, which is a unimodular symplectic form. 
Denote the automorphism group of $H_A$ preserving $\langle~, ~\rangle$ by $\Sp(H_A)$. Fixing a symplectic basis for $H_A$ determines an isomorphism $\Sp(H_A)\cong \Sp_{2g}(A)$, the group of $2g\times 2g$ symplectic matrices with entries in $A$.  \\
\indent The action of $\G_{g,n}$ on $H_1(S, \Z)$ preserves the intersection pairing, and hence there is a natural representation
$$
\rho: \G_{g,n}\to \Sp(H_\Z).
$$
It is well known that $\rho$ is surjective for $g \geq 1$ (see  \cite[Thm.~6.4]{FaMa}). Restricting to $\D_{g,n}$, we obtain the natural representation 
$$
\rho^\hyp: \D_{g,n}\to \Sp(H_\Z).
$$
Note that the image of $\rho^\hyp$ does not depend on $n$. Denote the image of $\rho^\hyp$ by $G_g$. 

\subsection{Level subgroups}
For each integer $m \geq0$, the congruence subgroup $\Sp(H_\Z)[m]$ of $\Sp(H_\Z)$ of level $m$ is defined to be the kernel of the reduction mod $m$ map:
$$
\Sp(H_\Z)[m] = \ker(\Sp(H_\Z) \to \Sp(H_{\Z/m\Z})).
$$
For each integer $m \geq 0$, define the level $m$ subgroup of $\G_{g,n}$ to be the kernel of the reduction of $\rho$ mod $m$:
$$
\G_{g,n}[m] = \ker\left( \G_{g,n}\overset{\rho}\to \Sp(H_\Z)\to \Sp(H_{\Z/m\Z})\right).
$$
The Torelli group $T_{g,n}$ is the level $0$ subgroup $\G_{g,n}[0]$. The level $m$ subgroup $\Delta_{g,n}[m]$ is the intersection of $\Delta_{g,n}$ with $\G_{g,n}[m]$:
$$
\Delta_{g,n}[m]=\Delta_{g,n}\cap \G_{g,n}[m].
$$
In particular, the level $0$ subgroup $\Delta_g[0]$ is the {\it hyperelliptic Torelli group} denoted by $T\Delta_g$. 
By a result of A'Campo \cite{acampo}, the image $G_g$ contains the principal congruence subgroup $\Sp(H_\Z)[2]$. For $m\geq 1$, denote the image of $\Delta_g[m]$ in $\Sp(H_\Z)$ by $G_g[m]$. In particular, $G_g[1]=G_g$.\\
\indent Let $W$ be the set of the $2g+2$ points fixed by $\sigma$. Fixing a labeling $W =\{1, 2, \ldots, 2g+2\}$, we may identify $\Aut W$ with the symmetric group $\mathbb{S}_{2g+2}$. The hyperelliptic mapping class group $\Delta_g$ acts on $W$ and the natural homomorphism $\rho_W:\Delta_g\to \Aut W$ is surjective. There is a faithful representation $\Aut W\hookrightarrow \Sp(H_{\Z/2\Z})$ (Cf. \cite[\S 6]{Mumford}), and there is a commutative diagram:
$$
\xymatrix@R=1em@C=2em{
\Delta_g\ar[r]^{\rho^\hyp}\ar[d]_{\rho_W}&\Sp(H_\Z)\ar[d]\\
\Aut W\ar[r]&\Sp(H_{\Z/2\Z}).
}
$$
It follows that the image of $G_g$ in $\Sp(H_{\Z/2\Z})$ is isomorphic to $\mathbb{S}_{2g+2}$.

\subsection{Birman exact sequences} Recall that the punctured surface $S - P$ is denoted by $S_{g,n}$.   
If $2g -2+n > 0$, then there is the Birman exact sequence (see \cite[Thm.~4.6]{FaMa})
\begin{equation}\label{full birman seq}
1 \to \pi_1(S_{g,n}) \to \G_{g,n+1}\to \G_{g,n}\to 1.
\end{equation}
It is known that the Birman exact sequence does not split for $g=2$ and $n = 0$ (see \cite[Cor. 5.11]{FaMa}).  The  case when $g \geq 2$ and $n\geq 2$ follows from a result \cite[Cor.~4]{GG} of Gonçalves and Guaschi. Applying profinite completion to the sequence (\ref{full birman seq}), we obtain the profinite Birman exact sequence, and we have
\begin{theorem}[{\cite[Thm.~1, Cor.~2]{wat_rk}}]
If $g\geq 4$ and $n\geq 0$, the exact sequence
\begin{equation*}\label{pro full birman seq}
1 \to \widehat{\pi_1(S_{g,n})}\to \widehat {\G_{g,n+1}}\to \widehat{\G_{g,n}}\to 1
\end{equation*}
does not split. Consequently, if $g\geq 4$ and $n\geq 0$, then the sequence (\ref{full birman seq}) does not split. 
\end{theorem}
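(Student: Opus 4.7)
The plan is to convert the profinite splitting question into one about pro-algebraic groups over $\Q_\ell$, where enough extra structure (a weight grading and an $\Sp_{2g}$-action) is available to detect an obstruction. Fix a prime $\ell$ and apply the weighted completion of Hain--Matsumoto to the symplectic representation $\rho:\widehat{\G_{g,n}}\to \Sp(H_{\Z_\ell})$ with the standard weights (weight $-1$ on $H$, and so on). Since weighted completion is functorial and preserves short exact sequences in this setting, any splitting of the profinite Birman sequence would induce an $\Sp_{2g}$-equivariant splitting of the associated exact sequence of $\Q_\ell$-pro-algebraic groups
$$1\to \cP\to \cG_{g,n+1}\to \cG_{g,n}\to 1,$$
where $\cG_{g,\bullet}$ is the weighted completion of $\widehat{\G_{g,\bullet}}$ and $\cP$ is the $\ell$-adic unipotent (Mal'cev) completion of $\widehat{\pi_1(S_{g,n})}$; the latter identification uses that the kernel maps trivially to $\Sp_{2g}$, so weighted completion collapses to unipotent completion.

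Passing to Lie algebras of the prounipotent radicals and then to associated gradeds with respect to the weight filtration, the putative splitting would yield a graded $\mathfrak{sp}_{2g}$-equivariant Lie algebra section of
$$0\to \Gr^W\p\to \Gr^W\u_{g,n+1}\to \Gr^W\u_{g,n}\to 0,$$
where $\u_{g,\bullet}$ and $\p$ are the Lie algebras of the prounipotent radicals of $\cG_{g,\bullet}$ and $\cP$. The low weights are computable via Johnson-type homomorphisms: $\Gr^W_{-1}\p\cong H$, while $\Gr^W_{-1}\u_{g,n}$ and $\Gr^W_{-2}\u_{g,n}$ are described in terms of $H$, $\Lambda^3 H/H$, and the puncture classes $\gamma_1,\dots,\gamma_n$. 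The main obstacle is to exhibit an explicit graded obstruction. I would locate it in the degree $-2$ piece: using the explicit Johnson cocycle, one identifies a bracket of two elements in $\Gr^W_{-1}\u_{g,n+1}$ whose image in $\Gr^W_{-2}\u_{g,n}$ lies in a specific irreducible $\mathfrak{sp}_{2g}$-isotypic component, but whose only equivariant lift to $\Gr^W_{-2}\u_{g,n+1}$ necessarily has a nonzero projection to $\Gr^W_{-2}\p$ involving the new puncture class $\gamma_{n+1}$. Because an equivariant graded section must commute with brackets, this forces a contradiction. The hypothesis $g\ge 4$ is used to guarantee that the relevant irreducible $\mathfrak{sp}_{2g}$-summands of $\Lambda^2 H$, $\Lambda^3 H$, and the Johnson image are pairwise nonisomorphic, so no cancellation between the obstruction and other summands can occur.

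For the second assertion, profinite completion is a functor, so any splitting of the discrete sequence (\ref{full birman seq}) would profinitely complete to a splitting of the profinite Birman sequence, which the previous argument rules out. The hardest step is the explicit Johnson cocycle computation and the $\mathfrak{sp}_{2g}$-representation-theoretic bookkeeping identifying the obstruction class in the relevant $\Ext^1$ group; everything else is formal functoriality of weighted completion and passage to graded Lie algebras.
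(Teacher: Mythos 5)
The paper does not actually prove this statement: it is imported verbatim from \cite{wat_rk}, so the only internal point of comparison is the analogous hyperelliptic argument in \S\ref{proofs} (Proposition \ref{no section for punctured family} and the proof of Theorem \ref{main seq does not split}). Your overall strategy --- completion relative to the symplectic representation, exactness of completion applied to the Birman sequence, and an $\Sp(H)$-equivariant graded Lie algebra obstruction in weight $-2$ --- is indeed the strategy of \cite{wat_rk} and of \S\ref{proofs} here. But as written there are two genuine gaps. First, ``weighted completion of $\rho:\widehat{\G_{g,n}}\to\Sp(H_{\Zl})$ with the standard weights'' does not parse: $\Sp_{2g}$ has no nontrivial central cocharacter, so the Hain--Matsumoto construction relative to $\Sp$ returns $\Sp$ itself. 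You must either pass to $\GSp(H_\Ql)$ and the arithmetic fundamental group, letting the cyclotomic character supply the weights (as in \cite{wat_sec} and \cite{wat_rk}), or take relative completion with respect to $\Sp(H)$ and import the weight filtration from Hodge theory, as this paper does. Relatedly, left-exactness of the completed sequence is not formal; it uses center-freeness of $\p$ and Hain's exactness criterion.

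Second, and more seriously, the decisive step --- exhibiting a nonzero obstruction --- is only announced, and the description you give of it is wrong in the essential case $n=0$: there the kernel is $\widehat{\pi_1(S_g)}$ with $S_g$ closed, so $\Gr^W_{-2}\p\cong\Lambda^2_0H$ and there is no ``new puncture class $\gamma_{n+1}$'' for the obstruction to involve. What actually happens is this: since $\Hom_{\Sp(H)}(\Lambda^3_0H,H)=0$, Schur's lemma forces the weight $-1$ part of any equivariant graded section of $\Gr^W_\bullet\u^\geom_{g,1}\to\Gr^W_\bullet\u^\geom_g$ to be $v\mapsto(v,0)$ in $\Lambda^3_0H\oplus H_0$, and one must then show that the weight $-2$ relation module of $\Gr^W_\bullet\u^\geom_g$ inside $\Lambda^2(\Lambda^3_0H)$ --- specifically its $V_{[1^2]}$-isotypic part --- is sent by the bracket of $\u^\geom_{g,1}$ to a nonzero multiple of the generator of $\Lambda^2_0H=\Gr^W_{-2}\p$. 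That nonvanishing is the nontriviality of the Johnson/$C-C^{-1}$ class and is precisely the nonformal input you have deferred; for $n\geq 1$ one additionally has to rule out sections whose weight $-1$ part mixes several $H_j$, which requires the $\Theta$-relation bookkeeping carried out in \S\ref{proofs}. Finally, your explanation of the hypothesis $g\geq 4$ cannot be the right one: the summands of $\Lambda^2H$, $\Lambda^3H$ and the Johnson image are already pairwise nonisomorphic for $g=3$, so the bound must come from elsewhere, namely from the determination of the weight $-2$ relations of $\u^\geom_g$ used in \cite{wat_rk}. As it stands the proposal is a correct plan with the decisive computation missing.
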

By pulling back the sequence (\ref{full birman seq}) along the natural inclusion $\D_{g,n}\to \G_{g,n}$, we obtain the Birman exact sequence for hyperelliptic mapping class groups:
\begin{equation}\label{hyp birman seq}
1 \to \pi_1(S_{g,n})\to \D_{g,n+1}\to \D_{g,n}\to 1.
\end{equation}
Taking the profinite completion of this sequence, we also have the profinite Birman exact sequence for hyperelliptic mapping class groups
\begin{equation}\label{pro hyp birman seq}
1 \to \widehat{\pi_1(S_{g,n})}\to \widehat {\D_{g,n+1}}\to \widehat{\D_{g,n}}\to 1.
\end{equation}
\section{Moduli stack of hyperelliptic curves}   

\subsection{The moduli stack of curves of type $(g,n)$}
Let $k$ be a field. The moduli stacks $\M_{g,n/k}$ and $\cH_{g,n/k}$ are the quotient of a smooth variety by a finite group. Since this group action is not free, in order to consider the universal curves and local systems, we are lead to consider these moduli spaces as stacks and their underlying spaces as orbifolds. \\
\indent   Let $\X_{/k}$ be a stack over $k$. A curve $f: C \to \X_{/k}$ of type $(g,n)$  over $k$ is a proper 
smooth morphism whose geometric fiber is an irreducible one-dimensional variety of arithmetic genus $g$, with $n$ disjoint sections $s_1, s_2,\ldots, s_n: \X_{/k}\to C$.  A punctured curve $f^o:C^o\to \X_{/k}$ of type $(g,n)$ over $\X_{/k}$ is the restriction of $f$ to the complement of the images of the $n$ sections in $C$. Denote by $\M_{g,n/k}$ the moduli stack of curves of type $(g,n)$ over $k$. The stack $\M_{g,n/k}$ is a smooth DM-stack over $k$. For a field $k' \supset k$, denote the base change  $\M_{g,n/k}\times_{\Spec k} \Spec k'$ by $\M_{g,n/k'}$.  The stack $\M_{g,n/k}$ admits the universal complete curve $\cC_{g,n/k}\to \M_{g,n/k}$ of type $(g,n)$, which is equipped with $n$ tautological sections $x_1, \ldots, x_n$. The restriction of the universal complete curve to the complement of the images of the tautological sections is the universal punctured curve $\M_{g, n+1/k}\to \M_{g,n/k}$ of type $(g,n)$ over $k$. \\
\indent For each integer $m \geq 1$, a Jacobi structure of level $m$ on a curve $f:C\to \X_{/k}$ is a symplectic isomorphism of sheaves over $\X_{/k}$
$$
\phi: R^1f_\ast(\Z/m\Z)\to (\Z/m\Z)^{2g},
$$
where the sheaf $R^1f_\ast(Z/m\Z)$ is provided with a symplectic structure with the cup product and $(\Z/m\Z)^{2g}$ is quipped with the standard symplectic structure. Let $k$ be a field containing all $m$th roots of unity $\mu_m(\bar k)$. Denote the DM stack classifying curves of type $(g,n)$ over $k$ with a Jacobi structure of level $m$ by $\M_{g,n/k}[m]$. It is a finite etale cover of $\M_{g,n/k}$. It follows from \cite[Thm.~1.10]{MFK} that when $m\geq 3$, it is a smooth quasi-projective variety over $k$. When $m=1$, the stack $\M_{g,n/k}[1]$ is denoted by $\M_{g,n/k}$. 
\subsection{The universal hyperelliptic curves} 
Suppose that $g\geq 2$. Let $k$ be a field of characteristic zero. 
Denote the moduli stack of hyperelliptic curves of genus $g$ over $k$ by $\cH_{g/k}$. The stack $\cH_{g/k}$ is a closed substack of $\M_{g/k}$, which associates to every $k$-scheme $T$ the set
$$
\cH_{g/k}(T) =\{\text{curves } C\to T \text{ of hyperelliptic curves of genus } g\}/\cong.
$$
Arsie and Vistori proved in \cite{ArVi} that it is an irreducible smooth DM stack of finite type over $k$ of dimension $2g-1$. Assume that $m$ is a nonnegative integer and $k$ contains all $m$th roots of unity $\mu_m(\bar k)$.  The fiber product 
$$
\cH_{g/k}\times_{\M_{g/k}}\M_{g/k}[m]
$$
is a finite disjoint union of mutually isomorphic connected components.  We discuss this number of components in \S \ref{orb app}. Define $\cH_{g/k}[m]$ as a particular component of the fiber product.   When $m_1$ divides $m_2$, we always assume that $\cH_{g/k}[m_2]$ is a finite etale cover of $\cH_{g/k}[m_1]$. Define $\cH_{g, n/k}[m]$ as the fibre product
$$
\cH_{g/k}[m]\times_{\M_{g/k}}\M_{g,n/k}.
$$
When $m=1$, denote $\cH_{g,n/k}[1]$ by $\cH_{g,n/k}$. When $m\geq 3$, it is a smooth subvariety of $\M_{g,n/k}[m]$.
Pulling back the universal complete curve $\cC_{g,n/k}\to \M_{g,n/k}$ of type $(g,n)$ to $\cH_{g,n/k}[m]$, we have the universal complete hyperelliptic curve of type $(g,n)$ with a level $m$ structure
$$
\pi:\cC_{\cH_{g,n/k}}[m]\to \cH_{g,n/k}[m].
$$
Similarly, pulling back the universal punctured curve $\M_{g,n+1/k}\to \M_{g,n/k}$ of type $(g,n)$ to $\cH_{g,n/k}[m]$, we have the universal punctured hyperelliptic curve of type $(g,n)$ with a level $m$ structure
$$
\pi^o:\cH_{g,n+1/k}[m]\to \cH_{g,n/k}[m].
$$
The $n$ tautological sections $x_1, \ldots, x_n$ of $\cC_{g,n/k}\to \M_{g,n/k}$ pull back to give $n$ disjoint sections of $\pi$, which we denote by $s_1, s_2, \ldots, s_n$. The universal hyperelliptic curve $\pi$ admits a fiber-preserving automorphism of order $2$, denoted by $J$, whose restriction to each fiber is the hyperelliptic involution.   By composing with $J$, we obtain sections $J\circ s_1, \ldots, J\circ s_n$, which we call the hyperelliptic conjugates of the sections $s_j$. The universal punctured hyperelliptic curve $\pi^o$ is the restriction of $\pi$ to the complement of the images of the sections $s_j$ in $\cC_{\cH_{g,n/k}}[m]$. 
\subsection{The orbifold approach}\label{orb app}
Here we mean a compact Riemann surface by a complex curve. Suppose that $2g -2 +n > 0$. Recall that the pair $(S, P)$ is an $n$-pointed smooth compact oriented surface $S$ of genus $g$. Denote the Teichm\"uller space of $(S,P)$ by $\fX_{g,n}$. When $n=0$, denote $\fX_{g,0}$ by $\fX_g$.  It is a set of isotopy classes of orientation-preserving diffeomorphisms  $h: S\to C$ of $S$  to a complex curve $C$. In fact, it is a contractible complex analytic manifold of dimension $3g -3 +n$. The mapping class group $\G_{g,n}$ acts on $\fX_{g,n}$ as a group of biholomorphisms via its action on the markings:
$$
\lambda: h\mapsto h\circ \lambda^{-1},\,\, \lambda \in \G_{g,n},\,\, f \in \fX_{g,n}.
$$
It is well known that this action is properly discontinuous (see \cite[Thm.~12.2]{FaMa}) and virtually free.\\
\indent As an orbifold, for each integer $m\geq 0$, the moduli space of $n$-pointed smooth projective curves of genus $g$ with a level $m$ structure is defined to be the orbifold quotient of $\fX_{g,n}$ by $\G_{g,n}[m]$:
$$
\M_{g,n}[m]= (\G_{g,n}[m]\backslash\fX_{g,n})^\orb.
$$
The orbifold $\M_{g,n}[m]$ is the underlying orbifold of $\M_{g,n/\C}[m]$.\\
\indent  Recall that $\sigma:S\to S$ is a fixed hyperelliptic involution of $S$. Let $\mathfrak{Y}_g$ be the set of points of $\fX_g$ fixed by $\sigma$: 
$$
\mathfrak{Y}_g = \fX_g^\sigma.
$$
It consists of markings $[h:S\to C]$ such that $h\sigma h^{-1} \in \Aut C$. By a result of Earle \cite{Ea}, it is biholomorphic to the Teichm\"uller space $\fX_{0, 2g+2}$, and therefore connected and contractible of dimension $2g-1$.  Observe that the stabilizer of $\mathfrak{Y}_g$ is the hyperelliptic mapping class group $\Delta_g$. Since there is a unique conjugacy class of hyperelliptic involutions in $\G_g$, it follows that the hyperelliptic locus $\fX_g^\hyp$ in $\fX_g$ is
$$
\fX_g^\hyp = \bigcup_{\lambda\in \G_g/\Delta_g}\lambda(\mathfrak{Y}_g) =  \bigcup_{\lambda\in \G_g/\Delta_g}\fX_g^{\lambda\sigma\lambda^{-1}}.
$$
 As an orbifold,  for an integer $m \geq 0$, the moduli space of smooth projective hyperelliptic curves of genus $g$ with a level $m$ structure is the orbifold quotient of $\mathfrak{Y}_g$ by $\Delta_g[m]$:
$$
\cH_g[m] = (\Delta_g[m]\backslash\mathfrak{Y}_g)^\orb.
$$
The orbifold $\cH_g[m]$ is the underlying orbifold of $\cH_{g/\C}[m]$. 
Since each hyperelliptic curve $C$ has a unique hyperelliptic involution, we have 
\begin{proposition}
The hyperelliptic locus $\fX_g^\hyp$ of Teichm\"uller space is the disjoint
union of the translates of $\mathfrak{Y}_g$:
$$
\fX_g^\hyp = \coprod_{\lambda \in \G_g/\D_g} \lambda(\mathfrak{Y}_g). \qed
$$
\end{proposition}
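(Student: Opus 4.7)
The plan is to verify disjointness of the translates $\lambda(\mathfrak{Y}_g)$, since the equality $\fX_g^\hyp=\bigcup_{\lambda\in \G_g/\D_g}\lambda(\mathfrak{Y}_g)$ was already recorded in the preceding display. Fix $\lambda_1,\lambda_2\in \G_g$ and suppose there is a point $[h:S\to C]\in \lambda_1(\mathfrak{Y}_g)\cap \lambda_2(\mathfrak{Y}_g)$. Unwinding the definition of the $\G_g$-action on $\fX_g$, the condition $\lambda_i^{-1}\cdot[h]\in \mathfrak{Y}_g=\fX_g^\sigma$ is equivalent to the conjugate $\lambda_i\sigma\lambda_i^{-1}$ fixing $[h]$, so both $\lambda_1\sigma\lambda_1^{-1}$ and $\lambda_2\sigma\lambda_2^{-1}$ lie in $\operatorname{Stab}_{\G_g}([h])$.

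I would then invoke the classical fact that, for $g\ge 2$, the natural map $\operatorname{Stab}_{\G_g}([h])\hookrightarrow \Aut(C)$, sending a mapping class $\gamma$ to the unique biholomorphism $\phi_\gamma$ of $C$ with $\phi_\gamma\circ h$ isotopic to $h\circ \gamma^{-1}$, is an \emph{injective} homomorphism. Under this embedding each conjugate $\lambda_i\sigma\lambda_i^{-1}$ maps to a holomorphic involution of $C$ fixing $2g+2$ points, i.e., to a hyperelliptic involution of $C$.

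The key input, already flagged in the sentence preceding the statement, is the uniqueness of the hyperelliptic involution on a hyperelliptic curve of genus $g\ge 2$. This uniqueness forces the two images in $\Aut(C)$ to coincide, and then the injectivity above upgrades the identity to $\lambda_1\sigma\lambda_1^{-1}=\lambda_2\sigma\lambda_2^{-1}$ already in $\G_g$. Hence $\lambda_2^{-1}\lambda_1$ centralizes $\sigma$ in $\G_g$, so $\lambda_2^{-1}\lambda_1\in \D_g$ by the definition of the hyperelliptic mapping class group, and therefore $\lambda_1\D_g=\lambda_2\D_g$. The translates are thus indexed without repetition by $\G_g/\D_g$, yielding the claimed disjoint union.

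The only mildly subtle step is the injectivity of the stabilizer map into $\Aut(C)$; this is a standard rigidity fact for $g\ge 2$, reflecting that a nontrivial finite-order self-diffeomorphism of a compact hyperbolic surface is never isotopic to the identity. Everything else is a direct unwinding of the definitions combined with the cited uniqueness of the hyperelliptic involution, so I do not expect any serious obstacle.
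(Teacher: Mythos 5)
Your proof is correct and follows exactly the route the paper intends: the paper dispatches the proposition with the single remark that each hyperelliptic curve has a unique hyperelliptic involution, and your argument is the natural fleshing-out of that remark (two conjugates of $\sigma$ stabilizing the same marked curve both realize the hyperelliptic involution of $C$, hence coincide in $\G_g$ by rigidity, forcing $\lambda_2^{-1}\lambda_1$ into the centralizer $\D_g$). No substantive differences to report.
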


When $m\geq 0$ is even, $\Sp(H_\Z)[m] \subseteq G_g$. In this case there is
an isomorphism
$$
\Sp(H_\Z)/\big(G_g\cdot\Sp(H_\Z)[m]\big) \cong \Sp(H_{\Z/2\Z})/\mathbb{S}_{2g+2}.
$$
Since $\Sp(H_\Z)[m]$ is generated by symplectic transvections, 
the
reduction mod 2 mapping $\Sp(H_\Z)[m] \to \Sp(H_{\Z/2\Z})$ is surjective whenever
$m$ is odd.

\begin{corollary}
The hyperelliptic locus of $\M_g[m]$ is
\begin{align*}
\M_g^\hyp[m] &=
\coprod_{\phi \in \Sp(H_{\Z})/(G_g\cdot\Sp(H_{\Z}[m]))} \phi(\cH_g[m])\cr
&=
\begin{cases}
\cH_g[m] & m \text{ odd},\cr
\coprod_{\phi \in \Sp(H_{\Z/2\Z})/\mathbb{S}_{2g+2}} \phi(\cH_g[m]) & m\ge 0 \text{ even }.
\end{cases}
\end{align*}
Consequently, for each $m\ge 0$, the number of components of the hyperelliptic
locus in $\M_g[2m]$ equals
$$
|\Sp(H_{\Z/2\Z})|/|\mathbb{S}_{2g+2}| =
\frac{2^{g^2}\prod_{j=1}^g (2^{2j}-1)}{(2g+2)!}. \qed
$$
\end{corollary}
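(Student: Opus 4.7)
The plan is to realize $\M_g^\hyp[m]$ as an orbit space and then push the resulting description down to the symplectic group. Recall that $\M_g[m] = (\G_g[m]\backslash\fX_g)^\orb$ and that $\M_g^\hyp[m]$ is by definition the image of $\fX_g^\hyp$ under this orbifold quotient. By the preceding proposition,
$$\fX_g^\hyp = \coprod_{\lambda\in \G_g/\D_g}\lambda(\mathfrak{Y}_g),$$
with each translate connected. Since $\G_g[m]$ is normal in $\G_g$, the stabilizer in $\G_g[m]$ of $\lambda(\mathfrak{Y}_g)$ equals $\lambda\D_g[m]\lambda^{-1}$, so every $\G_g[m]$-orbit on translates contributes one component, and that component is isomorphic to $(\D_g[m]\backslash\mathfrak{Y}_g)^\orb = \cH_g[m]$. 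Thus the components of $\M_g^\hyp[m]$ are indexed by the double coset space $\G_g[m]\backslash\G_g/\D_g$, the component of the class of $\lambda$ being the translate $\phi(\cH_g[m])$ for $\phi = \rho(\lambda)$.

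Next I would pass from $\G_g$ to $\Sp(H_\Z)$ via the surjection $\rho$. Using the three facts $\rho(\G_g[m]) = \Sp(H_\Z)[m]$, $\rho(\D_g) = G_g$, and $\ker\rho = T_g\subseteq \G_g[m]$, a short diagram chase (the last inclusion is what absorbs the ambiguity introduced by the Torelli kernel) produces a bijection
$$\G_g[m]\backslash\G_g/\D_g \xrightarrow{\;\sim\;} \Sp(H_\Z)[m]\backslash\Sp(H_\Z)/G_g = \Sp(H_\Z)/\bigl(G_g\cdot\Sp(H_\Z)[m]\bigr),$$
which establishes the first displayed formula of the corollary.

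Finally I would split into cases using the two observations recorded immediately before the corollary. When $m$ is even, $\Sp(H_\Z)[m]\subseteq\Sp(H_\Z)[2]\subseteq G_g$, hence $G_g\cdot\Sp(H_\Z)[m] = G_g$, and the identifications $G_g/\Sp(H_\Z)[2]\cong\mathbb{S}_{2g+2}$ and $\Sp(H_\Z)/\Sp(H_\Z)[2]\cong\Sp(H_{\Z/2\Z})$ recorded in the discussion of level subgroups give
$$\Sp(H_\Z)/G_g \cong \Sp(H_{\Z/2\Z})/\mathbb{S}_{2g+2}.$$
When $m$ is odd, surjectivity of $\Sp(H_\Z)[m]\to\Sp(H_{\Z/2\Z})$ combined with $\Sp(H_\Z)[2]\subseteq G_g$ (which contributes the kernel of reduction mod $2$) forces $G_g\cdot\Sp(H_\Z)[m] = \Sp(H_\Z)$, so the quotient is trivial and $\M_g^\hyp[m] = \cH_g[m]$. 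Substituting $|\Sp_{2g}(\F_2)| = 2^{g^2}\prod_{j=1}^g(2^{2j}-1)$ and $|\mathbb{S}_{2g+2}| = (2g+2)!$ then gives the component count for $m=2k$. The only step that really requires care is the compatibility between the orbifold structures on each component and the identification $(\D_g[m]\backslash\mathfrak{Y}_g)^\orb\cong\cH_g[m]$, which hinges on the normality of $\G_g[m]$ in $\G_g$; everything else is bookkeeping.
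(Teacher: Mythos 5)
Your argument is correct and is essentially the proof the paper leaves implicit behind its \textup{\qedsymbol}: components of $\M_g^\hyp[m]$ indexed by $\G_g[m]\backslash\G_g/\D_g$ via the preceding proposition, transported to $\Sp(H_\Z)[m]\backslash\Sp(H_\Z)/G_g=\Sp(H_\Z)/\bigl(G_g\cdot\Sp(H_\Z)[m]\bigr)$ using $\ker\rho=T_g\subseteq\G_g[m]$, and then the two parity observations recorded just before the corollary. The points you flag as needing care (normality of $\G_g[m]$, disjointness of the translates, and the identification of each component with $\cH_g[m]$) are handled correctly.
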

 The Torelli space $\T_g$ is the quotient of $\fX_g$ by the Torelli group $T_g$: $\T_g = T_g\backslash \fX_g$. The hyperelliptic locus $\T^\hyp_g$ in $\T_g$ is the image of $\fX^\hyp_g$ in $\T_g$. It is a finite disjoint union of the translates of $\cH_g[0] = T\Delta_g\backslash \mathfrak{Y}_g$.  The action of $\Delta_g[m]$ on $\cH_g[0]$ factors through $G_g[m]$, and the quotient $G_g[m]\backslash \cH_g[0]$ is the orbifold $\cH_g[m]$.

\subsection{Curves of compact type}
Denote by $\overline{\M}_{g,n/\C}$ the Deligne-Mumford compactification (see \cite{DM}) of $\M_{g,n/\C}$. The stack $\overline{\M}_{g,n/\C}$ classifies the isomorphism classes of stable $n$-pointed curves of genus $g$. It is proper and smooth over $\C$ and the boundary $\overline{\M}_{g,n/\C} - \M_{g,n/\C}$ is a divisor with normal crossings. Denote by $\delta_0$ the component of the boundary divisor whose generic point is an irreducible $n$-pointed curve of genus $g$ with one node. The dual graph of the curves of this type is not a tree. A projective nodal complex curve $C$ is said to be of {\it compact type} if $\Pic^0C$ is an abelian variety or equivalently its dual graph is a tree. If $C$ is of compact type, then the first homology of $C$ is the sum of the first homology groups of its components:
$$
H_1(C,\Z) \cong \bigoplus H_1(C_i,\Z).
$$
The sum of the intersection forms of its components yields a unimodular symplectic form on $H_1(C;\Z)$. \\
\indent Denote the complement of $\delta_0$ in $\overline{\M}_{g,n/\C}$ by $\M^c_{g,n/\C}$. The stack $\M^c_{g,n/\C}$ classifies the stable $n$-pointed curves of compact type of genus $g$. Denote the moduli stack of principally polarized abelian varieties of dimension $g$  over $\C$ by $\A_{g/\C}$.   The Torelli map $\M_{g,n/\C}\to \A_{g/\C}$ extends to give a map $\M^c_{g,n/\C}\to \A_{g/\C}$. 
For each $m\geq 1$, 
denote by $\A_{g/\C}[m]$ the moduli stack of principally polarized abelian varieties of dimension $g$ with a level $m$ structure. It is a finite etale cover of $\A_{g/\C}$ and a smooth quasi-projective complex variety for $m \geq 3$. The pullback of $\A_{g/\C}[m]$ to $\M^c_{g,n/\C}$ along the Torelli map is the moduli stack $\M^c_{g,n/\C}[m]$ of curves of compact type of type $(g,n)$ with a level $m$ structure. The closure of $\cH_{g,n/\C}[m]$ in $\M^c_{g,n/\C}[m]$ is denoted by $\cH^c_{g,n/\C}[m]$. \\
\indent  By the deformation theory of stable curves, $\T_g$ can be enlarged to a complex manifold, denoted by $\T^c_g$. It is the Torelli space of curves of compact type of genus $g$ with a homology framing.  The action of $\Sp(H_\Z)[m]$ on $\T_g$ extends to $\T^c_g$, and the quotient $\Sp(H_\Z)[m]\backslash \T^c_g$ is the underlying orbifold $\M^{c}_{g}[m]$ of $\M^c_{g/\C}[m]$.  \\
\indent The closure of the hyperelliptic locus $\T^\hyp_g$ in $\T^c_g$ is denoted by $\T^{hyp,c}_g$. It is a complex manifold consisting of finitely many mutually isomorphic smooth components. 
Brendle, Margalit, and Putman proved in \cite{BMP} that each connected component of $\T^{\hyp,c}_g$ is simply-connected. The component of $\T^{\hyp,c}_g$ corresponding to the closure of $\cH_g[0]$ is denoted by $\cH^{c}_g[0]$. Moreover, for $m\geq 1$, denote the closure of $\cH_g[m]$ in $\M^{c}_g[m]$ by $\cH^{c}_g[m]$. The action of $G_g[m]$ on $\cH_g[0]$ extends to $\cH^{c}_g[0]$, and the quotient $G_g[m]\backslash \cH^{c}_g[0]$ is the underlying orbifold $\cH^{c}_g[m]$ of  the stack $\cH^c_{g/\C}[m]$. 
When $m\geq 3$, $\Sp(H_\Z)[m]$ is torsion-free  and so is $G_g[m]$. Therefore $\M^c_{g}[m]$ and $\cH^c_{g}[m]$ and  are complex manifolds that are smooth quasi-projective complex varieties. 

\subsection{Key homotopy exact sequences}
Assume that $g \geq 2$, $n\geq 0$, and $m\geq 1$.  Here we introduce the homotopy exact sequences of the orbifold fundamental groups associated to the universal hyperelliptic curves 
$\pi:\cC_{\cH_{g,n}}[m]\to \cH_{g,n}[m]$ and $\pi^o:\cH_{g, n+1}[m]\to \cH_{g,n}[m]$. For the algebraic fundamental groups, we apply profinite completion to obtain the corresponding exact sequences. \\
\indent For $n\geq 1$, denote the $n$th power of  $\cC_{\cH_g}[m]$ by $\cC^n_{\cH_{g} }[m]$. The moduli space $\cH_{g,n+1}[m]$ is the complement of the images of the tautological sections $s_1, \ldots, s_n$ in $\cC_{\cH_{g,n}}[m]$, and on the other hand $\cC_{\cH_{g,n}}[m]$ is open in $\cC^{n+1}_{\cH_{g} }[m]$.  For $n\geq 1$, let $\etabar_{n+1}=[C;\bar x_0,\bar x_1,\ldots,\bar x_n]$ be a point of $\cH_{g,n+1}[m]$, $\etabar_n =[C;\bar x_1,\ldots,\bar x_n]$ the image of $\etabar_{n+1}$ in $\cH_{g,n}[m]$ via $\pi^o$, and $\etabar_0=[C]$ the image of $\etabar_n$ via the projection $\cH_{g,n}[m]\to \cH_{g}[m]$. These points are also considered as geometric points in the corresponding algebraic stacks. We consider $\etabar_{n+1}$ as a point of $\cC_{\cH_{g,n}}[m]$ via the open immersion $\cH_{g,n+1}[m]\hookrightarrow \cC_{\cH_{g,n}}[m]$. Let $C$ and $C^o$ be the fibers  over $\etabar_n$ of $\pi$ and $\pi^o$, respectively. Note that the point $\bar x_0$ is a geometric point of $C^o$.  Let $\pi': \cC_{\cH_{g,n}}'[m]\to \cH_{g,n}[m]$ be the restriction of $\pi$ to the complement of the image of the section $s_{1}$ in $\cC_{\cH_{g,n}}[m]$. The fiber of $\pi'$ over $\etabar_n$ is the $1$-punctured curve, denoted by $C'$,  obtained from $C$ by removing $\bar x_1$. \\
\indent Denote the $n$-th ordered configuration space of $C$ by $F^n(C)$. The fibers over $\etabar_0$ of $\cH_{g,n}[m]\to \cH_{g}[m]$ and $\cH_{g, n+1}[m]\to \cH_{g}[m]$ are the configuration spaces $F^n(C)$ and $F^{n+1}(C)$ on $C$, respectively.  The points $\bar {\bf x}_n = (\bar x_1, \ldots, \bar x_n)$ and $\bar {\bf x}_{n+1} =(\bar x_0, \bar x_1, \ldots, \bar x_{n})$ are points of $F^n(C)$ and $F^{n+1}(C)$, respectively. The universal punctured hyperelliptic curve $\pi^o$ induces the projection $F^{n+1}(C)\to F^n(C)$ given by $(u_0,u_1,\ldots, u_n)\mapsto (u_1, \ldots, u_n)$, whose fiber over $\bar {\bf x}_n$ is $C^o$. Then there are exact sequences of fundamental groups:
\begin{equation}\label{punct homo seq}
1\to \pi_1(C^o, \bar x_0)\to \pi_1(\cH_{g, n+1}[m], \etabar_{n+1})\to \pi_1(\cH_{g,n}[m], \etabar_n)\to 1,
\end{equation}
\begin{equation}\label{1 punct homo seq}
1\to \pi_1(C', \bar x_0)\to \pi_1(\cC_{\cH_{g,n}}'[m], \etabar_{n+1})\to \pi_1(\cH_{g,n}[m], \etabar_n)\to1.
\end{equation}
\begin{equation}\label{complete homo seq}
1\to \pi_1(C, \bar x_0)\to \pi_1(\cC_{\cH_{g, n}}[m], \etabar_{n+1})\to \pi_1(\cH_{g, n}[m], \etabar_n)\to 1,
\end{equation}
\begin{equation}\label{n+1th power over nth seq }
1\to \pi_1(C, \bar x_0)\to \pi_1(\cC^{n+1}_{\cH_{g}}[m], \etabar_{n+1})\to \pi_1(\cC^{n}_{\cH_{g}}[m], \etabar_{n})\to 1,
\end{equation}
\begin{equation}\label{univ seq with conf fiber}
1\to \pi_1(F^n(C), \bar {\bf x}_n)\to \pi_1(\cH_{g,n}[m], \etabar_n)\to \pi_1(\cH_{g}[m], \etabar_0)\to 1,
\end{equation}
\begin{equation}\label{nth power seq}
1\to \prod_{j=1}^n\pi_1(C, \bar x_j)\to \pi_1(\cC^n_{\cH_{g}}[m], \etabar_n)\to \pi_1(\cH_{g}[m], \etabar_0)\to 1,
\end{equation}
and
\begin{equation}
1\to \pi_1(C^o, \bar x_0)\to \pi_1(F^{n+1}(C), \bar {\bf x}_{n+1})\to \pi_1(F^n(C), \bar {\bf x}_n)\to 1.
\end{equation}

These exact sequences fit in the following commutative diagrams:
\begin{equation}\label{comm diag fund grp}
\xymatrix@R=1em@C=2em{        
1\ar[r]&\pi_1(C^o, \bar x_0)\ar[r]\ar@{=}[d]&\pi_1(F^{n+1}(C), \bar {\bf x}_{n+1})\ar[r]\ar@{^{(}->}[d]  &\pi_1(F^n(C), \bar {\bf x}_n)\ar[r]\ar@{^{(}->}[d]        &1\\
1\ar[r]&\pi_1(C^o, \bar x_0)\ar[r]  \ar[d]      &\pi_1(\cH_{g, n+1}[m], \etabar_{n+1})\ar[r]\ar[d] & \pi_1(\cH_{g, n}[m], \etabar_n)\ar[r]\ar@{=}[d]  &1\\
1\ar[r]&\pi_1(C', \bar x_0)\ar[r]  \ar[d]      &\pi_1(\cC_{\cH_{g, n+1}}'[m], \etabar_{n+1})\ar[r]\ar[d] & \pi_1(\cH_{g, n}[m], \etabar_n)\ar[r]\ar@{=}[d]  &1\\
1\ar[r]&\pi_1(C, \bar x_0)\ar[r] \ar@{=}[d] &\pi_1(\cC_{\cH_{g,n}}[m], \etabar_{n+1})\ar[r]\ar[d]    & \pi_1(\cH_{g, n}[m], \etabar_n)\ar[r]\ar[d]&1\\
1\ar[r]&\pi_1(C, \bar x_0)\ar[r]&\pi_1(\cC^{n+1}_{\cH_{g}}[m], \etabar_{n+1})\ar[r]    &\pi_1(\cC^{n}_{\cH_{g}}[m], \etabar_{n})\ar[r]&1
}
\end{equation}
\begin{equation}\label{comm diag fund grp with the nth power}
\xymatrix@R=1em@C=2em{        
1\ar[r]&\pi_1(F^n(C), \bar {\bf x}_n)\ar[r]\ar[d]&\pi_1(\cH_{g, n}[m], \etabar_n)\ar[r]\ar[d]  &\pi_1(\cH_{g}[m], \etabar_0)\ar[r]\ar@{=}[d]        &1\\
1\ar[r]&\prod_{j=1}^n\pi_1(C, \bar x_j)\ar[r]  \ar@{=}[d]      & \pi_1(\cC^n_{\cH_{g}}[m], \etabar_n)\ar[r]\ar[d] & \pi_1(\cH_{g}[m], \etabar_0)\ar[r]\ar[d]^{\mathrm{diag} } &1\\
1\ar[r]&\prod_{j=1}^n\pi_1(C, \bar x_j)\ar[r]  &\prod_{j=1}^n\pi_1(\cH_{g, 1}[m], \bar x_j)\ar[r]    & \pi_1(\cH_{g}[m], \etabar_0)^n\ar[r]&1
 }
\end{equation}

\begin{remark}
Similar commutative diagrams of algebraic fundamental groups exist and are obtained by taking the profinite completion of the diagrams (\ref{comm diag fund grp}) and (\ref{comm diag fund grp with the nth power}). 
\end{remark}

\section{Minimal Presentations of Lie algebras $\Gr^W_\bullet \p$ and $\Gr^W_\bullet\p_{g,n}$} 
Let $\cP$,  $\cP^o$, and $\cP_{g,n}$ be the unipotent completions of $\pi_1(C, \bar x_0)$,  $\pi_1(C^o, \bar x_0)$, and $\pi_1(F^n(C), \bar{\bf x}_n)$ over $\Q$. Denote the Lie algebras of $\cP$,  $\cP^o$, and $\cP_{g,n}$ by $\p$,  $\p^o$, and $\p_{g,n}$, respectively. The Lie algebras $\p$ and $\p^o$
admit natural weight filtrations as MHSs constructed by Morgan in \cite{morgan} and  $\p_{g,n}$ by Hain in \cite{hain_deRham}. Denote the weight filtrations on $\p$, $\p^o$, and $\p_{g,n}$ by $W_\bullet\p$, $W_\bullet\p^o$,  and $W_\bullet\p_{g,n}$, respectively. In this section, we will review presentations of the associated graded Lie algebras $\Gr^W_\bullet \p$ and $\Gr^W_\bullet\p_{g,n}$. For a vector space $V$, denote the free Lie algebra generated by $V$ by $\L(V)$. It is a graded Lie algebra:
$$
\L(V) = \bigoplus_{n\geq 1}\L_n(V),
$$
where $\L_n(V)$ is the component with bracket length $n$. 
Set $H = H_1(C,\Q)$. The group $H$ is equipped with the algebraic intersection pairing $\langle~,~\rangle: H\otimes H\to \Q$, which is a unimodular symplectic form. 

\subsection{A presentation of $\Gr^W_\bullet\p$} It follows from the fact that the $H_1(\p) =H_1(C)$ is pure of weight $-1$  that the natural weight filtration on $\p$ agrees with its lower central series (see \cite[Lemma 4.7]{hain_infini}), and hence it is defined over $\Q$. Let $a_1, b_1, \ldots, a_g, b_g$ be a symplectic basis for $H$. Let $\Theta =\sum_{l=1}^ga_l\wedge b_l$ in $\Lambda^2H$. From the work of Labute \cite{labute}, we have the following presentation of $\Gr^W_\bullet\p$. 
\begin{proposition}
If $g \geq 1$, there is a natural $\Sp(H)$-equivariant Lie algebra isomorphism
$$
\Gr^W_\bullet \p \cong \L(H)/ (\Theta).
$$
\end{proposition}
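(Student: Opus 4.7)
The plan is to reduce the statement to a classical computation for surface groups and then invoke Labute's theorem on one-relator Lie algebras.

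Since $H_1(\p) = H$ is pure of weight $-1$, the paragraph preceding the proposition records (via \cite[Lemma~4.7]{hain_infini}) that $W_\bullet\p$ coincides with the lower central series filtration on $\p$, so $\Gr^W_\bullet\p = \Gr^{\lcs}_\bullet \p$. A standard property of the unipotent completion of a finitely generated group $\pi$ is that $\Gr^{\lcs}_\bullet \p \cong (\Gr^{\lcs}_\bullet \pi)\otimes\Q$, naturally in $\pi$. Applying this to $\pi = \pi_1(C,\bar x_0)$, the problem reduces to exhibiting an $\Sp(H)$-equivariant isomorphism $\Gr^{\lcs}_\bullet \pi_1(C)\otimes\Q \cong \L(H)/(\Theta)$.

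Next, use the standard one-relator presentation $\pi_1(C,\bar x_0) = \langle \alpha_1,\beta_1,\ldots,\alpha_g,\beta_g \mid [\alpha_1,\beta_1]\cdots[\alpha_g,\beta_g]\rangle$. The relator $r$ lies in the commutator subgroup, and the Magnus expansion computes its image in $\Gr^{\lcs}_2 \pi_1(C)\otimes\Q \cong \L_2(H)$ to be $\sum_j [a_j,b_j]$; under the $\Sp(H)$-equivariant isomorphism $\L_2(H)\cong \Lambda^2 H$ this is exactly $\Theta$. The natural surjection $\L(H)\twoheadrightarrow \Gr^{\lcs}_\bullet \pi_1(C)\otimes\Q$ sending each $a_j, b_j$ to the class of the corresponding generator therefore factors through $\L(H)/(\Theta)$, yielding a natural $\Sp(H)$-equivariant surjection
$$
\L(H)/(\Theta) \twoheadrightarrow \Gr^{\lcs}_\bullet \pi_1(C)\otimes\Q.
$$

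The main obstacle is verifying that this surjection is an isomorphism, equivalently that no further relations appear in higher degrees. This is precisely the content of Labute's theorem \cite{labute}, which applies because $\Theta$ is a non-degenerate element of $\L_2(H)\cong \Lambda^2 H$ (ensured by unimodularity of the intersection pairing on $H$). Labute's argument compares Hilbert series: one computes the Hilbert series of $\L(H)/(\Theta)$ via Koszul duality for quadratic Lie algebras, and matches it term-by-term with the Hilbert series of $\Gr^{\lcs}_\bullet \pi_1(C)\otimes\Q$ obtained from the Magnus expansion of the free group. Surjectivity together with equality of dimensions in each degree then forces an isomorphism.
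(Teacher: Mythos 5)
Your proposal is correct and follows essentially the same route as the paper, which simply notes that purity of $H_1(\p)$ in weight $-1$ identifies $W_\bullet\p$ with the lower central series (citing \cite[Lemma~4.7]{hain_infini}) and then quotes Labute's theorem \cite{labute} for the presentation of $\Gr^{\lcs}_\bullet\pi_1(C)\otimes\Q$. The extra details you supply (Quillen's comparison $\Gr^{\lcs}_\bullet\p\cong\Gr^{\lcs}_\bullet\pi_1(C)\otimes\Q$, the Magnus-expansion computation of the relator's leading term, and the Hilbert-series argument inside Labute's proof) are accurate fillings-in of what the paper leaves implicit.
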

Denote the $\Sp(H)$-module $\Lambda^2H/(\Theta)$ by $\Lambda^2_0 H$. It is isomorphic to the irreducible $\Sp(H)$-representation $V_{[1^2]}$ corresponding to the partition $2 = 1+1$. 
\begin{corollary}\label{surface weight -1 -2}
If $g\geq 2$, then there are $\Sp(H)$-equivariant isomorphisms
$$
\Gr^W_r\p \cong \begin{cases}
			H & r =-1\\
			\Lambda^2_0H & r =-2	
			\end{cases}
$$
\end{corollary}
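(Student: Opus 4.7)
The plan is to read off both isomorphisms directly from the presentation $\Gr^W_\bullet\p \cong \L(H)/(\Theta)$ given by the preceding proposition, using that the weight filtration on $\p$ coincides with its lower central series (as remarked just before the proposition, since $H_1(\p) = H$ is pure of weight $-1$). Under this convention, $H$ sits in weight $-1$, and the free Lie algebra $\L(H)$ is bigraded so that $\L_n(H)$ has weight $-n$; the relation $\Theta = \sum a_l\wedge b_l$ is a degree--$2$ element of $\L(H)$, hence homogeneous of weight $-2$, and the ideal $(\Theta)$ it generates is a graded ideal.

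At weight $-1$, the ideal $(\Theta)$ contributes nothing (since its generator lies in weight $-2$ and the ideal is graded with all pieces of weight $\le -2$), so
\[
\Gr^W_{-1}\p \cong \L_1(H) = H.
\]
At weight $-2$, I use the standard identification $\L_2(H)\cong \Lambda^2 H$ given by $[x,y]\mapsto x\wedge y$ (available because $\L_2(H)$ is spanned by Lie brackets of elements of $H$, which are antisymmetric and satisfy no further relations in a free Lie algebra). Under this identification the weight $-2$ part of the ideal $(\Theta)$ is exactly the $\Q$-line spanned by $\Theta$, so
\[
\Gr^W_{-2}\p \cong \Lambda^2 H/(\Theta) = \Lambda^2_0 H.
\]
Both isomorphisms are $\Sp(H)$-equivariant because the isomorphism in the proposition is, and because both the identification $\L_2(H)\cong \Lambda^2 H$ and the submodule $\Q\cdot\Theta$ are $\Sp(H)$-invariant (indeed $\Theta$ is the symplectic form viewed as an element of $\Lambda^2 H$, which is $\Sp(H)$-fixed).

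There is really no obstacle here; the only point requiring a little care is checking that the weight $-2$ component of the ideal $(\Theta)$ is just $\Q\cdot\Theta$ rather than something larger. This is immediate from the grading: $(\Theta)$ in a free Lie algebra is spanned by iterated brackets of $\Theta$ with elements of $\L(H)$, and the only such bracket of total weight $-2$ is $\Theta$ itself. The hypothesis $g\ge 2$ is used only to guarantee that $\Lambda^2_0 H = \Lambda^2 H/\Q\cdot\Theta$ is nonzero and is the irreducible $\Sp(H)$-representation $V_{[1^2]}$, matching the statement.
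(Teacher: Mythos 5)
Your proof is correct and is exactly the argument the paper leaves implicit (the corollary is stated without proof as an immediate consequence of Labute's presentation): the ideal $(\Theta)$ is generated in bracket degree $2$, so it contributes nothing in weight $-1$ and only the line $\Q\cdot\Theta$ in weight $-2$ under the standard $\Sp(H)$-equivariant identification $\L_2(H)\cong\Lambda^2H$. No issues.
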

Recall that $C^o$ is the $n$-punctured curve obtained from $C$ by removing $n$ distinct points $\bar x_1, \ldots, \bar x_n$. For $n\geq 1$, the natural projection $\pi_1(C^o, \bar x_0)\to \pi_1(C, \bar x_0)$ induced by the open immersion $C^o\hookrightarrow C$ yields the surjection $\p^o\to \p$. This Lie algebra surjection is a morphism of MHS, and hence, there is the $\Sp(H)$-equivariant graded Lie algebra surjection $\Gr^W_\bullet \p^o\to \Gr^W_\bullet\p$ whose kernel is generated by $\Q(1)^{\oplus n}$ in weight $-2$. 
\begin{corollary}\label{open surface weight -1 -2}
If $g \geq 2$ and $n\geq 1$, then there are $\Sp(H)$-equivariant isomorphisms
$$
\Gr^W_r\p^o \cong \begin{cases}
			H & r =-1\\
			\Lambda^2_0H \oplus \Q(1)^{\oplus n} & r =-2	
			\end{cases}
$$

\end{corollary}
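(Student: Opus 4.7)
The plan is to read off both isomorphisms from the surjection $\Gr^W_\bullet \p^o \twoheadrightarrow \Gr^W_\bullet \p$ described in the paragraph immediately preceding the statement. By Corollary~\ref{surface weight -1 -2}, the target satisfies $\Gr^W_{-1}\p \cong H$ and $\Gr^W_{-2}\p \cong \Lambda^2_0 H$. The kernel of the surjection is the ideal of $\Gr^W_\bullet\p^o$ generated by a copy of $\Q(1)^{\oplus n}$ concentrated in weight $-2$; hence this kernel vanishes in weights $\geq -1$ and, in weight $-2$, coincides with the generating subspace itself, since bracketing a weight $-2$ element against any other element lands in weight $\leq -3$.

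With this in hand, the weight $-1$ claim is immediate: the surjection restricts to an $\Sp(H)$-equivariant isomorphism $\Gr^W_{-1}\p^o \xrightarrow{\sim} H$.

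For weight $-2$ I would invoke the short exact sequence of finite-dimensional rational $\Sp(H)$-modules
\begin{equation*}
0 \longrightarrow \Q(1)^{\oplus n} \longrightarrow \Gr^W_{-2}\p^o \longrightarrow \Lambda^2_0 H \longrightarrow 0.
\end{equation*}
Since $\Sp(H)$ is a reductive algebraic group, every such extension of rational representations splits; equivalently, the two end terms share no common irreducible constituent (the left is a sum of copies of the trivial representation with a Tate twist, while the right is the irreducible $V_{[1^2]}$), so $\Ext^1_{\Sp(H)}(\Lambda^2_0 H, \Q(1)^{\oplus n})$ vanishes. A chosen $\Sp(H)$-equivariant splitting yields the desired isomorphism $\Gr^W_{-2}\p^o \cong \Lambda^2_0 H \oplus \Q(1)^{\oplus n}$.

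The entire argument is essentially bookkeeping on top of the structural description of the kernel given just before the statement, so there is no genuine obstacle; the only step worth stating explicitly is that the weight-$(-2)$ piece of an ideal generated in weight $-2$ equals its generating subspace, which follows from the fact that $\Gr^W_\bullet\p^o$ is concentrated in weights $\leq -1$.
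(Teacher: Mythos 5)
Your argument is correct and is essentially the argument the paper intends: the corollary is read off from the surjection $\Gr^W_\bullet\p^o\to\Gr^W_\bullet\p$ with kernel generated in weight $-2$ by $\Q(1)^{\oplus n}$, together with Corollary \ref{surface weight -1 -2} and the semisimplicity of finite-dimensional $\Sp(H)$-modules in characteristic zero. Your explicit observation that the weight $-2$ piece of an ideal generated in weight $-2$ equals its generating subspace (because $\Gr^W_\bullet\p^o$ is concentrated in weights $\leq -1$) is exactly the bookkeeping the paper leaves implicit.
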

\subsection{A presentation of $\Gr^W_\bullet\p_{g,n}$} The Lie algebra structure of $\Gr^W_\bullet\p_{g,n}$ plays a key role in the study of the sections of the universal punctured hyperelliptic curve $\pi^o: \cH_{g, n+1}\to \cH_{g,n}$. Together with Morgan's Theorem in \cite{morgan}, the following result by Hain shows that $\Gr^W_{\bullet}\p_{g,n}$ has a quadratic presentation. 
\begin{proposition}[{\cite[Prop.~2.1\& Prop.~12.5]{hain_infini}}]\label{ab of pgn}
For each $[C]$ in $\M_g$ and for all $g\geq 1$ and $n\geq 1$, there are isomorphisms of MHSs
$$
H_1(\p_{g,n})\cong H_1(\pi_1(F^n(C))\cong H_1(C^n,\Q)\cong \bigoplus_{j=1}^nH_1(C_j,\Q),
$$
where $C_j$ is the $j$th copy of $C$. 
Furthermore, the natural MHS on $H^2(F^n(C))$ is pure of weight $2$. 
\end{proposition}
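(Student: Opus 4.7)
The plan is to trace the chain of isomorphisms on $H_1$ and then argue purity of $H^2(F^n(C))$. First, $\cP_{g,n}$ is the Malcev (unipotent) completion of $\pi_1(F^n(C),\bar{\mathbf{x}}_n)$, so a standard property of Malcev completions of finitely generated groups yields $H_1(\p_{g,n})\cong H_1(\pi_1(F^n(C)),\Q)$, and this identification is a morphism of MHS because both sides receive their Hodge structure from the geometry of $F^n(C)$ (cf.~\cite{hain_deRham}). The Hurewicz isomorphism identifies the right-hand side with $H_1(F^n(C),\Q)$, and K\"unneth identifies $H_1(C^n,\Q)$ with $\bigoplus_{j=1}^n H_1(C_j,\Q)$. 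Thus the entire four-fold chain reduces to producing an isomorphism of MHS $H_1(F^n(C),\Q)\cong H_1(C^n,\Q)$.

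To establish this, together with the purity of $H^2(F^n(C))$, I would apply the Gysin/Deligne weight spectral sequence to the open complement $F^n(C)=C^n\setminus\Delta$, where $\Delta=\bigcup_{i<j}\Delta_{ij}$ and $\Delta_{ij}=\{x_i=x_j\}\cong C\times C^{n-2}$. The relevant low-degree exact sequences of MHS read
$$0\to H^1(C^n)\to H^1(F^n(C))\to\bigoplus_{i<j}H^0(\Delta_{ij})(-1)\xrightarrow{\gamma}H^2(C^n)$$
and
$$H^2(C^n)\to H^2(F^n(C))\to\bigoplus_{i<j}H^1(\Delta_{ij})(-1)\xrightarrow{\delta}H^3(C^n).$$
The map $\gamma$ sends the generator of the $(i,j)$-summand to the divisor class $[\Delta_{ij}]\in H^2(C^n)$, and these classes are linearly independent in $H^2(C^n)$: one checks via K\"unneth that $[\Delta_{ij}]$ pairs nontrivially with a 2-cycle $\alpha\otimes\beta$ placed in the $(i,j)$-factors and trivially with analogous cycles at other index pairs. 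Hence $\gamma$ is injective, so $H^1(F^n(C))\cong H^1(C^n)$ as MHS, and by dualization $H_1(F^n(C),\Q)\cong H_1(C^n,\Q)$. Similarly, $\delta$ is injective on each summand: by Poincar\'e duality on the smooth projective pair $(C^n,\Delta_{ij})$, it is transpose to the restriction $H^1(C^n)\to H^1(\Delta_{ij})$, which is surjective by K\"unneth. This kills the weight-3 part of $H^2(F^n(C))$, leaving only the weight-2 image of $H^2(C^n)/\im\gamma$, which gives the purity.

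The main technical obstacle is justifying these clean Gysin-Deligne exact sequences, since $\Delta$ is not a simple normal crossings divisor in $C^n$: the $\Delta_{ij}$'s intersect in higher codimension along triple and higher diagonals. I would either replace $(C^n,\Delta)$ with a simple normal crossings resolution (for instance the Fulton-MacPherson compactification $C^{[n]}$) and run the full Deligne weight spectral sequence whose $E_1$-terms involve iterated diagonal intersections, or proceed inductively via the Fadell-Neuwirth fibration $F^n(C)\to F^{n-1}(C)$, whose fiber is a punctured curve with mixed $H^1$. In either approach one must verify that the extra boundary strata (or the fiber's residue classes) do not introduce surviving weight $\geq 3$ classes into $H^2(F^n(C))$, which reduces to further divisor-class independence arguments of essentially the same flavor as the injectivity of $\gamma$ above.
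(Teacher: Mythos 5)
The paper offers no proof of this proposition — it is quoted directly from Hain \cite{hain_infini} — so there is no in-paper argument to measure yours against; I will assess the proposal on its own terms. The first half is correct: the identification $H_1(\p_{g,n})\cong H_1(\pi_1(F^n(C)),\Q)$ is standard for unipotent completion, and your low-degree localization sequence for $H^1$ is legitimate even though $\Delta$ is not SNC, because only the generic points of $\Delta$ contribute to $H^2_\Delta(C^n)$ (the singular locus has codimension $\geq 2$). Your K\"unneth-pairing argument for the injectivity of $\gamma$ does work for $g\geq 1$, using the $H^1\otimes H^1$ component of $[\Delta_{ij}]$, so $H^1(F^n(C))\cong H^1(C^n)$ as MHS and the $H_1$ chain follows.

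The purity of $H^2(F^n(C))$ is where there is a genuine gap, and it is larger than the caveat you flag. First, your second displayed sequence is not available from the naive localization argument: $H^3_\Delta(C^n)$ injects into $H^1(\Delta^{\mathrm{sm}})(-1)$ where $\Delta^{\mathrm{sm}}$ is the union of the $\Delta_{ij}$ \emph{minus} the higher diagonals, whose $H^1$ is strictly larger than $\bigoplus_{i<j}H^1(\Delta_{ij})$; both the exactness at the third term and the identification of $\delta$ require the SNC machinery you defer. Second, even granting the sequence, injectivity of $\delta$ \emph{on each summand} does not give injectivity of $\delta$ on the direct sum, which is what is needed to conclude that $H^2(C^n)\to H^2(F^n(C))$ is surjective; the images of the various $H^1(\Delta_{ij})(-1)$ in $H^3(C^n)$ must be shown to be independent, a further K\"unneth-component computation rather than a formal consequence. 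Third, and most seriously, your sequences contain no term that could detect $\Gr^W_4H^2(F^n(C))$. In any SNC model (e.g.\ Fulton--MacPherson), this graded piece is $\ker\bigl(H^0(\widetilde D^{(2)})(-2)\to H^2(\widetilde D^{(1)})(-1)\bigr)$, coming from the double-intersection strata of the boundary; for $n\geq 3$ the diagonals really do meet in codimension $2$ (e.g.\ $\Delta_{12}\cap\Delta_{13}$ is the small diagonal), so this term is nonzero and its vanishing is an additional injectivity statement your proposal never formulates. The overall strategy (resolve to SNC, or induct via Fadell--Neuwirth) is viable, but the content of the purity claim lives precisely in the steps you have deferred, so as written the second assertion of the proposition is not proved.
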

Since $H_1(\p_{g,n})$ is pure of weight $-1$, the natural weight filtration on $\p_{g,n}$ also agrees with its lower central sereis. Denote $H_1(\p_{g,n})=\bigoplus_{j=1}^nH_j$ by $H^{\oplus n}$, where $H_j = H_1(C_j,\Q)$. For $v$ in $H$, denote the corresponding vector of $H_j$ by $v^{(j)}$. Let $\Theta_i = \sum_{l=1}^g[a_l^{(i)}, b_l^{(i)}]$ and $\Theta_{ij} =\sum_{l=1}^g[a_l^{(i)}, b_l^{(j)}]$ for $i\not=j$ in $\L_2(H^{\oplus n})\subset \L(H^{\oplus n})$. 
\begin{theorem}[{\cite[Thm.~12.6]{hain_infini}}]
For $g\geq 1$ and $n\geq 1$, there is an $\Sp(H)$-equivariant isomorphism of graded Lie algebras
$$
\Gr^W_\bullet \p_{g,n}\cong \L(H^{\oplus n})/R,
$$
where $R$ is the Lie ideal generated by the relations
\begin{align*}
[u^{(i)}, v^{(j)}] = [u^{(j)},v^{(i)}] & \text{ for all $i$ and $j$};\\
[u^{(i)}, v^{(j)}] = \frac{\langle u, v\rangle}{g}\Theta_{ij}& \text{ when } i\not =j;\\
\Theta_i + \frac{1}{g}\sum_{j\not =i}\Theta_{ij}=0 & \text{ for } 1\leq i \leq n,
\end{align*}
where $u$ and $v$ are arbitrary vectors in $H$. 
\end{theorem}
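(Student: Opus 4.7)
The plan is to reduce the statement, via formality, to a cohomological computation of the cup product on $F^n(C)$. The essential input is Proposition \ref{ab of pgn}: $H_1(\p_{g,n}) \cong H^{\oplus n}$ is pure of weight $-1$ and $H^2(F^n(C),\Q)$ is pure of weight $2$. The first assertion gives, as already noted in the paper, an $\Sp(H)$-equivariant surjection $\L(H^{\oplus n}) \twoheadrightarrow \Gr^W_\bullet \p_{g,n}$ and identifies the weight filtration on $\p_{g,n}$ with its lower central series. The second is the key to a \emph{quadratic} presentation: it is a standard consequence of the theory of Morgan and Hain that, for a smooth complex variety whose $H^1$ and $H^2$ are Hodge-theoretically pure of the expected weights, the space is $1$-formal, the Malcev Lie algebra is quadratically presented, and the degree-$2$ relation space is the annihilator of the cup product $\Lambda^2 H^1 \to H^2$ under the natural pairing between $\Lambda^2 H_1$ and $H^2$. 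So the problem reduces to computing this cup product.

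Next I would compute $H^*(F^n(C),\Q)$ using the Fulton-MacPherson/Arnol'd-Bezrukavnikov presentation: $F^n(C)$ is the complement of the big diagonal in $C^n$, so the Gysin sequence (equivalently, the Leray sequence of the Fadell-Neuwirth tower $F^n(C) \to F^{n-1}(C) \to \cdots \to C$) presents $H^*(F^n(C),\Q)$ by the pullbacks $\alpha_l^{(i)}, \beta_l^{(i)}$ of the $1$-forms on the factors of $C^n$ together with Poincar\'e-dual classes $G_{ij}\in H^2$ of the diagonals $\{x_i = x_j\}$. Three quadratic identities then hold: the symmetry $G_{ij} = G_{ji}$; the off-diagonal relation that, restricted through the diagonal, $v^{(i)} \cup w^{(j)}$ is a constant multiple of $\langle v,w\rangle\, G_{ij}$; and the one-factor surface relation $\sum_l \alpha_l^{(i)} \cup \beta_l^{(i)} = \omega_i$, which becomes a combination of diagonals after inserting the K\"unneth decomposition of $[\Delta_C] \in H^2(C\times C)$.

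Finally, dualizing these three cohomological identities under the cup-product pairing yields exactly the three families of Lie relations in the statement; the factor $\frac{1}{g}$ in (ii) and (iii) arises from the normalization $\langle \Theta, \omega\rangle = g$ of the symplectic class against the fundamental class of $C$, which enters when one rewrites $\omega_i$ through its diagonal expression. The main obstacle is the careful bookkeeping in the cohomology computation: one must track signs, the Leray differentials, and the precise K\"unneth decomposition of $[\Delta_C]$ in order to pin down the exact coefficient $\tfrac{1}{g}$ on $\Theta_{ij}$ and in the last relation. A secondary but essential ingredient is the $1$-formality statement invoked in the first step; one can either extract it from Morgan's general results on open smooth complex varieties, or argue directly by verifying that the Malcev Lie algebra MHS on $\p_{g,n}$ has $\mathrm{Gr}^W_{-2} H_2(\p_{g,n})$ as its entire $H_2$. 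Once the cup-product coefficient is nailed down, the $\Sp(H)$-equivariance is automatic and a dimension-comparison of the weight-graded pieces (using, for example, the Poincar\'e series of a quadratic Koszul algebra) completes the proof.
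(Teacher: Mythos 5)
The paper does not actually prove this statement: it is quoted, with citation, from Hain \cite{hain_infini}, Theorem~12.6, so there is no in-paper argument to compare against. Your proposal is in outline a faithful reconstruction of Hain's own proof: the purity of $H^1$ and $H^2$ of $F^n(C)$ (Proposition \ref{ab of pgn}) yields $1$-formality and hence a quadratic presentation of $\Gr^W_\bullet\p_{g,n}$ whose degree $-2$ relation space is the image of the dual of the cup product $\Lambda^2H^1\to H^2$, and that cup product is computed from the Gysin sequences of the diagonal inclusions in $C^n$. One clarification you should make explicit, since it is what makes your dualization step work: because $H^2(F^n(C))$ is pure of weight $2$ while the Gysin terms $\bigoplus_{i<j}H^1(\Delta_{ij})(-1)$ have weight $3$, the Gysin sequence forces $H^2(F^n(C))\cong H^2(C^n)/\langle[\Delta_{ij}]\rangle$; in particular, for a \emph{closed} surface your diagonal classes $G_{ij}$ are not new generators but are the images of the K\"unneth expansions of the $[\Delta_{ij}]$, so the cup product is surjective, the relation space is exactly the annihilator of its kernel as you claim, and the coefficient $1/g$ drops out of the normalization $\Theta\mapsto g\,\omega$ under $\Lambda^2H^1(C)\to H^2(C)$.
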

As an immediate consequence, we have the following description of $\Gr^W_r\p_{g,n}$ for $r =-1, -2$.
\begin{corollary}\label{pure braid weight -1 and -2}
If $g \geq 2$ and $n\geq 1$, we have
$$
\Gr^W_r\p_{g,n} \cong \begin{cases}
			\bigoplus_{j=1}^n H_j & r =-1\\
			\bigoplus_{j=1}^n\Lambda^2_0H_j \oplus \bigoplus_{1\leq i < j\leq n}\Q(1)_{ij} & r =-2	.
			\end{cases}
$$
\end{corollary}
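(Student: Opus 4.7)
The plan is to read off the $r=-1$ and $r=-2$ components directly from the quadratic presentation $\Gr^W_\bullet\p_{g,n}\cong \L(H^{\oplus n})/R$ established in the preceding theorem. For $r=-1$, the degree-$1$ piece of the free Lie algebra is $\L_1(H^{\oplus n}) = H^{\oplus n}=\bigoplus_j H_j$, and every relation generating $R$ has bracket length at least $2$. Hence $\Gr^W_{-1}\p_{g,n}\cong \bigoplus_{j=1}^n H_j$ is immediate.

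For $r=-2$, I would begin from the $\Sp(H)$-equivariant splitting
\[
\L_2(H^{\oplus n}) \;=\; \Lambda^2(H^{\oplus n}) \;=\; \bigoplus_{j=1}^n \Lambda^2 H_j \;\oplus\; \bigoplus_{1\leq i<j\leq n} H_i\otimes H_j,
\]
where for $i<j$ I identify $[u^{(i)},v^{(j)}]\leftrightarrow u\otimes v$. I would then cut each summand against the appropriate degree-$2$ relations. Relation~2, $[u^{(i)},v^{(j)}]=\tfrac{\langle u,v\rangle}{g}\Theta_{ij}$ for $i\neq j$, collapses each mixed summand $H_i\otimes H_j$ onto the one-dimensional line spanned by $\Theta_{ij}$; since this line has weight $-2$, it is a copy of $\Q(1)_{ij}$. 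Relation~1 is then automatic on this summand by antisymmetry of $\langle\cdot,\cdot\rangle$. On the diagonal summand $\Lambda^2 H_i$, relation~3 reads $\Theta_i = -\tfrac{1}{g}\sum_{j\neq i}\Theta_{ij}$; since the right-hand side already lives in the mixed part (already treated), the net effect is simply to quotient the line $\Q\cdot\Theta_i \subset \Lambda^2 H_i$, producing $\Lambda^2_0 H_i$. Assembling the contributions yields the asserted formula.

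The only substantive check is that the three relation families do not couple across the summands in an unexpected way. Relations~1 and~2 involve only mixed indices $i\neq j$ and live entirely in the $H_i\otimes H_j$ summands; relation~3 is the only one that crosses between diagonal and mixed components, and after using relation~2 to normalize $\Theta_{ij}$ it merely identifies $\Theta_i$ with a prescribed element of the already-quotiented mixed part. So the decomposition of $\Lambda^2(H^{\oplus n})$ is respected by the quotient. The \emph{main obstacle}, such as it is, is just this bookkeeping of relation interactions across the $\Sp(H)$-isotypic pieces, which is straightforward because the relations are indexed so cleanly.
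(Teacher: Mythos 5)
Your proposal is correct and is exactly the route the paper intends: the paper states this corollary as an immediate consequence of Hain's quadratic presentation $\Gr^W_\bullet\p_{g,n}\cong\L(H^{\oplus n})/R$, and your computation of $\L_1$ and of $\L_2(H^{\oplus n})=\Lambda^2(H^{\oplus n})$ modulo the degree-$2$ relations is the fleshed-out version of that. The only point worth making explicit is that the $n$ relations of type $\Theta_i+\tfrac1g\sum_{j\neq i}\Theta_{ij}=0$ each involve a distinct $\Theta_i$, so they eliminate the lines $\Q\Theta_i$ while leaving the classes $\Theta_{ij}$ ($i<j$) linearly independent, which is what justifies the final identification of the quotient with $\bigoplus_j\Lambda^2_0H_j\oplus\bigoplus_{i<j}\Q(1)_{ij}$.
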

 From the above description of $\Gr^W_r\p_{g,n}$, we may express the exact sequence 
 $$
 0\to\Gr^W_r \p^o\to \Gr^W_r\p_{g, n+1}\to \Gr^W_r\p_{g,n}\to 0
 $$
 for $r =-1, -2$ as
$$
0\to H_0\to \bigoplus_{j=0}^nH_j \to \bigoplus_{j=1}^nH_j\to 0,
$$
  $$
0\to \Lambda^2_0H_0\oplus \bigoplus_{j=1}^n\Q(1)_{0j}\to \bigoplus_{j=0}^n\Lambda^2_0H_j \oplus \bigoplus_{0\leq i < j\leq n}\Q(1)_{ij}\to  \bigoplus_{j=1}^n\Lambda^2_0H_j \oplus \bigoplus_{1\leq i < j\leq n}\Q(1)_{ij}\to 0,
$$
respectively.

\section{Relative completions of hyperelliptic mapping class groups}
In this section, we briefly review the relative completion of a discrete group and the continuous relative completion of a profinite group. Then we introduce the relative completion of hyperelliptic mapping class groups. The Lie algebras of the prounipotent radicals of the completions are the key objects needed in this paper.\\

\subsection{Proalgebraic groups}
By an algebraic group, we mean a linear algebraic group. Suppose that $F$ is a field of characteristic zero. A proalgebraic group $\cG$ over $F$ is an inverse limit of algebraic $F$-groups $G_i$:
$$
\cG = \varprojlim_iG_i.
$$
The Lie algebra $\g$ of $\cG$ is the inverse limit of the Lie algebras $\g_i$ of $G_i$:
$$
\g =\varprojlim_i\g_i.
$$
It is a topological Lie algebra, where  a base of neighborhoods of $0$ consists of the kernels of the natural projections $\g \to \g_i$. 
Similarly, a prounipotent group $\U$ over $F$ is the inverse limit of unipotent $F$-groups $U_i$:
$$
\U =\varprojlim_iU_i.
$$
A pronilpotent Lie algebra $\n$ is the inverse limit of finite dimensional nilpotent Lie algebras $\n_i$:
$$
\n =\varprojlim \n_i.
$$
It is also viewed as a topological Lie algebra. The Lie algebra $\u$ of $\U$ is a pronilpotent Lie algebra, which is the inverse limit of the Lie algebras $\u_i$ of $U_i$. The exponential and log maps are isomorphisms of proalgebraic varieties:
$$
\xymatrix@R=1em@C=2em{        
\U\ar@/^/[r]^{\log}&\ar@/^/[l]^{\exp}\u
}
$$

\subsection{Review of continuous (co)homology}
Suppose that $\n=\varprojlim_i\n_i$ is a pronilpotent Lie algebra over $F$. Define the homology of $\n$ by
$$
H_\bullet(\n):= \varprojlim_iH_\bullet(\n_i).
$$
It is a topological vector space, where the neighborhoods of $0$ are the kernels of the natural projections $H_\bullet(\n)\to H_\bullet(\n_i)$. 
Define the continuous cohomology $H^\bullet(\n)$ by 
$$
H^\bullet(\n):=\varinjlim_iH^\bullet(\n_i).
$$
There are isomorphisms
$$
H_\bullet(\n)\cong \Hom(H^\bullet(\n), F) \text{ and } H^\bullet(\n) \cong \Hom^\cts_F(H_\bullet(\n), F).
$$

\subsection{Relative completion of a discrete group}
Let $F$ be a field of characteristic zero. Suppose that $\G$ is a discrete group, $R$ a reductive linear algebraic group over $F$, and there is a Zariski-dense representation $\rho: \G\to R(F)$. The relative completion of $\G$ over $F$ with respect to $\rho$ consists of a proalgebraic $F$-group $\cG$ that is an extension 
\begin{equation}\label{rel comp seq}
1 \to \U\to \cG\to R\to 1
\end{equation}
of $R$ by a prounipotent $F$-group $\U$ and a Zariski-dense homomorphism $\tilde{\rho}: \G\to \cG$ that makes the diagram
$$
\xymatrix@R=1em@C=2em{
\G\ar[d]_{\tilde\rho}\ar[dr]^{\rho}&\\
\cG\ar[r]&R
}
$$
commute. It satisfies a universal property: If $G$ is an algebraic (resp. a proalgebraic) $F$-group that is an extension
$$
1\to U\to G\to R\to 1
$$ of $R$ by a unipotent (resp. a proalgebraic) $F$-group $U$, and if there is a homomorphism $\rho_G:\G\to G$ lifting $\rho$, then there exists a unique homomorphism $\phi_G: G\to \cG$ such that the diagram
$$
\xymatrix@R=2em@C=2em{
\G\ar[r]^{\rho_G}\ar[d]_{\tilde\rho}&G\ar[d]\ar@{->}[dl]|-{\phi_G}\\
\cG\ar[r]&R
}
$$
commutes.
When $R$ is trivial, we have $\U=\cG$ and $\tilde\rho: \G\to \U$ is the unipotent completion of $\G$ over $F$. Furthermore, by a generalization of Levi's Theorem, the extension (\ref{rel comp seq}) splits, and any two splittings are conjugate by an element of $\U$. Hence the Lie algebra $\u$ of $\U$ is an inverse limit of finite dimensional $R$-modules\footnote{Since $R$ is reductive, this is a direct product of finite dimensional irreducible $R$-modules.} and there is an isomorphism
$$
\cG \cong \exp \u \rtimes R.
$$ 
\indent Relative completions are to some extent computable, since they are controlled by cohomolgy. The following property of relative completion allows us to study the Lie algebra $\u$ of $\U$.
\begin{theorem}[{\cite[Thm.~3.8]{hain_relati}}]\label{rel compl cohom iso}
For all finite dimensional $R$-modules $V$, there is a natural homomorphism
$$
\Hom^\cts_R(H_l(\u), V)\cong(H^l(\u)\otimes V)^R\to H^l(\G, V).
$$
When $l=0, 1$, it is an isomorphism and when $l=2$, it is an injection.  
\end{theorem}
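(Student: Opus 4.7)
The plan is to combine the Hochschild--Serre spectral sequence for the extension $1 \to \U \to \cG \to R \to 1$ with the universal property defining $\cG$. Since $R$ is reductive and we work in characteristic zero, rational cohomology $H^p(R, M)$ vanishes for $p \geq 1$ on any rational $R$-module $M$, so the spectral sequence
$$
E_2^{p,q} = H^p(R, H^q(\u) \otimes V) \Rightarrow H^{p+q}(\cG, V)
$$
collapses onto its $p=0$ column, yielding $H^l(\cG, V) \cong (H^l(\u) \otimes V)^R$. The duality $H^l(\u) \cong \Hom^\cts(H_l(\u), F)$ identifies this with the left-hand side of the stated map, so it suffices to analyze the comparison $H^l(\cG, V) \to H^l(\G, V)$ induced by $\tilde{\rho}$.

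For $l = 0$, both sides equal $V^{\rho(\G)} = V^R$, the equality coming from Zariski density of $\rho(\G)$ in $R$ and rationality of $V$. For $l = 1$, a class in $H^1(\G, V)$ is represented by a crossed homomorphism $c : \G \to V$, equivalently a homomorphism $\G \to V \rtimes R$ lifting $\rho$. Since $V \rtimes R$ is a proalgebraic extension of $R$ by the unipotent group $V$, the universal property of $\cG$ produces a unique lift $\cG \to V \rtimes R$, whose $V$-component is a crossed homomorphism on $\cG$ restricting to $c$. Injectivity runs the same way: two crossed homomorphisms on $\cG$ that differ by a coboundary after restriction to $\G$ correspond to two splittings of $V \rtimes R \to R$ that agree on $\tilde{\rho}(\G)$, and the uniqueness clause of the universal property forces equality on all of $\cG$.

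For $l = 2$, the delicate case, I would interpret a class in $H^2(\cG, V)$ as a proalgebraic central extension $1 \to V \to \cG' \to \cG \to 1$, where $\cG'$ is itself a proalgebraic extension of $R$ by the prounipotent group $\ker(\cG' \to R)$ (an extension of $\U$ by $V$). Suppose the pullback in $H^2(\G, V)$ is trivial, so there exists a homomorphism $s: \G \to \cG'$ splitting the pullback; then $s$ lifts $\rho$. Applying the universal property of $\cG$ to $\cG' \to R$ produces a proalgebraic homomorphism $\tilde{s}: \cG \to \cG'$ extending $s$, and the composition $\cG \xrightarrow{\tilde{s}} \cG' \to \cG$ must equal the identity by uniqueness of the lift of $\tilde{\rho}$ along $\cG \to R$. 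Thus $\tilde{s}$ splits the extension and the $H^2$ class vanishes. The main obstacle is checking that a set-theoretic splitting $s$ obtained from the vanishing of the pulled-back $2$-cocycle really is a group homomorphism into the proalgebraic group $\cG'$; this requires a careful comparison of abstract and proalgebraic extensions, and is the point where reductivity of $R$ and the prounipotent nature of $\ker(\cG' \to R)$ are essential in fitting $\cG'$ into the hypotheses of the universal property.
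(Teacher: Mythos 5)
The paper does not prove this statement; it is quoted verbatim from Hain \cite{hain_relati}, so there is no in-paper proof to compare against. That said, your argument is essentially the standard proof from the cited source: collapse of the Hochschild--Serre spectral sequence of $1\to\U\to\cG\to R\to 1$ using the vanishing of positive-degree rational cohomology of the reductive group $R$ in characteristic zero, the identification of degree-one classes with lifts $\G\to V\rtimes R$ of $\rho$ handled by the universal property, and the extension-theoretic argument for injectivity on $H^2$. The one point I would adjust is your closing caveat: a splitting of the pulled-back extension over $\G$ is a group homomorphism by definition, so the genuine technical step in degree $2$ is rather that a class in $(H^2(\u)\otimes V)^R$ is realized by an extension of \emph{proalgebraic} groups $1\to V\to \cG'\to\cG\to 1$ with $\ker(\cG'\to R)$ prounipotent (this holds because continuous $H^2(\u)$ classifies such extensions at the Lie algebra level and the $R$-invariance lets one form the semidirect product with $R$); granting that, your use of the universal property in the direction $\cG\to\cG'$ --- which is the correct direction, despite the reversed arrow in the paper's displayed diagram --- together with Zariski density to force $\cG\to\cG'\to\cG$ to be the identity, is exactly right.
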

Suppose that $R =\Sp_{2g}(\Q)$. The finite dimensional irreducible representation of $\Sp_{2g}(\Q)$ can be indexed by partitions $\alpha$ of a nonnegative integer $n$ into $\leq g$ parts:
$$
n = \alpha_1+\alpha_2 +\cdots + \alpha_g
$$
with $\alpha_1\geq \alpha_2\geq \cdots \geq \alpha_g\geq 0$. Denote $\alpha$ by $[\alpha_1, \ldots, \alpha_g]$.  Every finite dimensional irreducible representation $V$ of $\Sp_{2g}(\Q)$ is absolutely irreducible, which means that $V\otimes \bar \Q$ is irreducible as an $\Sp_{2g}(\bar \Q)$-representation. 
In this case, we have
\begin{theorem}[{\cite[Thm.~3.8]{hain_relati}}]\label{rel comp iso for sp}
Let $\{V_\alpha\}$ be a set of representatives of the isomorphism classes of finite dimensional irreducible $\Sp_{2g}(\Q)$-modules indexed by partitions $\alpha$. Then there is a natural $\Sp_{2g}(\Q)$-equivariant isomorphism
$$
H^1(\u)\cong \bigoplus_{\alpha}H^1(\G, V_\alpha)\otimes V^\ast_\alpha,
$$
and
a natural $\Sp_{2g}(\Q)$-equivariant injection
$$
H^2(\u)\hookrightarrow \bigoplus_{\alpha}H^2(\G, V_{\alpha})\otimes V^\ast_{\alpha}
$$
\end{theorem}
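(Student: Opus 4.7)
The plan is to derive the statement from the preceding Theorem \ref{rel compl cohom iso} by invoking the semisimple representation theory of the reductive group $R=\Sp_{2g}(\Q)$. Because $\u$ is pro-nilpotent, $H^l(\u)=\varinjlim_i H^l(\u_i)$ is a directed union of finite-dimensional $R$-modules, and since $R$ is reductive each $H^l(\u_i)$ is completely reducible. Consequently $H^l(\u)$ itself is a completely reducible (discrete) $R$-module, and since $\{V_\alpha^\ast\}_\alpha$ is a full set of isomorphism classes of irreducibles, we obtain an $R$-equivariant isotypic decomposition
$$
H^l(\u)\cong\bigoplus_{\alpha}H^l(\u)_{(\alpha)},
$$
where $H^l(\u)_{(\alpha)}$ denotes the $V_\alpha^\ast$-isotypic component.

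Next I would apply Schur's lemma, which is available here because each $V_\alpha$ is absolutely irreducible so that $\End_R(V_\alpha)=\Q$, to identify
$$
H^l(\u)_{(\alpha)}\cong\Hom_R(V_\alpha^\ast,H^l(\u))\otimes V_\alpha^\ast,
$$
and then invoke the standard duality for finite-dimensional $R$-representations to rewrite
$$
\Hom_R(V_\alpha^\ast,H^l(\u))\cong (H^l(\u)\otimes V_\alpha)^R.
$$
Finally, feeding the irreducible module $V=V_\alpha$ into Theorem \ref{rel compl cohom iso} gives
$$
(H^l(\u)\otimes V_\alpha)^R\cong H^l(\G,V_\alpha),
$$
an isomorphism when $l=1$ and an injection when $l=2$. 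Stringing these three identifications together recognises each isotypic piece as $H^l(\G,V_\alpha)\otimes V_\alpha^\ast$, and summing over $\alpha$ produces the asserted isomorphism in degree one and injection in degree two.

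The one technical point that deserves care, and which I would regard as the main obstacle, is checking that the isotypic decomposition of $H^l(\u)$ is a genuine direct sum rather than merely a direct product over the infinitely many $\alpha$. This is the place where the continuous structure on $H^\bullet(\u)$ must be used: since each $V_\alpha^\ast$ is finite-dimensional, any $R$-equivariant map $V_\alpha^\ast\to H^l(\u)$ factors through some finite level $H^l(\u_i)$, and any class in $H^l(\u)$ lies in a finite-dimensional $R$-stable subspace, both of which follow from the definition $H^l(\u)=\varinjlim_i H^l(\u_i)$ and the finite-dimensionality of each $H^l(\u_i)$. Once this compatibility is in hand the remainder is formal manipulation; the genuine input is the imported Theorem \ref{rel compl cohom iso}, which we use as a black box.
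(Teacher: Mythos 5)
Your proposal is correct and is exactly the intended derivation: the paper states this result as a citation of the same theorem of Hain that underlies Theorem \ref{rel compl cohom iso}, and the passage from one to the other is precisely the isotypic decomposition of the discrete $R$-module $H^l(\u)=\varinjlim_i H^l(\u_i)$ together with Schur's lemma (using absolute irreducibility of the $V_\alpha$) and the identification $\Hom_R(V_\alpha^\ast,H^l(\u))\cong(H^l(\u)\otimes V_\alpha)^R$. Your attention to why the decomposition is a direct sum rather than a product — every class lies in some finite-dimensional $H^l(\u_i)$ — is the right point to flag and is handled correctly.
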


\subsection{Continuous relative completion of a profinite group}
For the profinite case, let $F$ be the field $\Ql$ for some prime number $\ell$. Suppose that $\G$ is a profinite group, $R$ is a reductive $\Ql$-group, and that $\rho:\G\to R(\Ql)$ is a continuous Zariski-dense representation. 
The continuous relative completion of $\G$ with respect to $\rho$ consists of a proalgebraic $\Ql$-group $\cG$ that is an extension of $R$ by a prounipotent $\Ql$-group $\U$  and a continuous Zariski-dense homomorphism $\tilde\rho: \G\to \cG(\Ql)$ that lifts $\rho$. The pair $(\cG, \tilde\rho)$ also satisfies a universal property that is similar to the one that the relative completion of a discrete group satisfies. All the homomorphisms involved are required to be continuous in this case. When $R$ is trivial, $\tilde\rho$ is the continuous unipotent completion of $\G$ over $\Ql$. \\
\indent  
In this paper, we consider the continuous relative completions of algebraic fundamental groups. Recall that the algebraic fundamental groups we considered are isomorphic to the profinite completions of orbifold fundamental groups. A key fact of relative completion is that it behaves well under the profinite completion of a discrete group. 
A discrete group is considered as a topological group equipped with the smallest topology in which the finite index normal subgroups of $\G$ are open. The following result allows us to compute the continuous completion from the relative completion of the discrete group.
\begin{theorem}[{\cite[Thm.~ 6.3]{hain_ration}}]
Suppose that $\G$ is a discrete group, $R$ is a reductive $\Ql$-group, and that $\rho:\G\to R(\Ql)$ is a continuous, Zariski-dense representation. Let $\rho_\ell: \widehat\G\to R(\Ql)$ be the unique continuous homomorphism induced by the profinite completion of $\G$ extending $\rho$. If $\cG$ and $\tilde\rho:\G\to \cG(\Ql)$ is the relative completion of $\G$ with respect to $\rho$, then 
\begin{enumerate}
\item $\tilde\rho$ is continuous and so induces a unique extension $\hat\rho_\ell: \widehat\G\to \cG(\Ql)$, and
\item the pair of $\cG$ and $\hat\rho_\ell$ is the continuous relative completion of $\widehat\G$ with respect to $\rho_\ell$.
\end{enumerate}

\end{theorem}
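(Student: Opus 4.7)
The plan is to handle the two items in sequence, with item (2) following essentially formally from item (1). For item (1), I need to show that $\tilde\rho$ is continuous when $\G$ carries its profinite topology and $\cG(\Ql) = \varprojlim_i \cG_i(\Ql)$ carries its natural $\ell$-adic topology, where $\cG_i$ runs over the finite-dimensional algebraic quotients of $\cG$, each an extension of $R$ by a finite-dimensional unipotent $\Ql$-group $U_i$. Continuity reduces to the claim that for every such $\cG_i$ and every $\ell$-adically open identity-neighborhood $H \subseteq \cG_i(\Ql)$, the preimage of $H$ under the composition $\tilde\rho_i : \G \to \cG(\Ql) \to \cG_i(\Ql)$ has finite index in $\G$. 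Since the projection $\cG_i \to R$ carries $\tilde\rho_i$ to $\rho$, continuity of $\rho$ lets me replace $\G$ by a finite-index subgroup so that its image in $R(\Ql)$ lies in a chosen compact open subgroup, reducing the problem to continuity for a homomorphism into $U_i(\Ql)$ alone.

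For this reduced problem I would proceed inductively up the lower central series of $U_i$. At each stage a central extension by a finite-dimensional $R$-module $V$ appears, and the analysis reduces to showing that the induced map $\G \to V(\Ql) \cong \Ql^{\dim V}$ is continuous for the profinite topology on $\G$. The key input is that the image of $\G$ is controlled by the $R$-equivariant structure and the Zariski-density hypothesis on $\tilde\rho$, so its $\ell$-adic closure in $V(\Ql)$ is a compact $R$-stable subgroup for which the preimages of open neighborhoods are intersections of finite-index subgroups of $\G$. Iterating this inductively and using that open neighborhoods of the identity in $U_i(\Ql)$ are cofinally given by preimages of such abelian quotients, one obtains the required finite-index statement and hence continuity of $\tilde\rho$.

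For item (2), continuity of $\tilde\rho$ together with the universal property of profinite completion gives a unique continuous extension $\hat\rho_\ell: \widehat\G \to \cG(\Ql)$ of $\tilde\rho$. To verify that $(\cG, \hat\rho_\ell)$ is the continuous relative completion of $\widehat\G$ with respect to $\rho_\ell$, I check its universal property: given any continuous homomorphism $\phi: \widehat\G \to G(\Ql)$ into a (pro)algebraic extension of $R$ by a (pro)unipotent group, lifting $\rho_\ell$, precomposition with $\G \to \widehat\G$ gives a discrete homomorphism $\G \to G(\Ql)$ lifting $\rho$. The discrete universal property of $\cG$ yields a unique proalgebraic map $\cG \to G$, whose induced map on $\Ql$-points is automatically continuous, and density of $\G$ in $\widehat\G$ promotes commutativity at the level of $\G$ to commutativity on all of $\widehat\G$, giving the required unique continuous factorization of $\phi$ through $\hat\rho_\ell$. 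The main obstacle is the continuity claim in (1): since $\U(\Ql)$ is pro-unipotent rather than profinite, continuity for the profinite topology on $\G$ is not automatic and relies on a careful analysis of the $R$-equivariant filtration on $\U$ together with Mal'cev-type rigidity to ensure that the abelianized data at each step produce only finite-index preimages in $\G$.
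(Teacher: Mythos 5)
This statement is quoted from Hain's paper \cite{hain_ration} and is not proved in the text under review, so your argument has to stand on its own. Part (2) of your proposal is fine: it is the standard transfer of universal properties along $\G\to\widehat\G$ using density and the fact that morphisms of proalgebraic $\Ql$-groups are continuous on $\Ql$-points. The genuine gap is at the heart of part (1). After reducing to a finite-dimensional quotient $\cG_i$ and inducting up the lower central series of $U_i$, you must show that a homomorphism $\G\to V(\Ql)\cong \Ql^{\dim V}$ is continuous for the profinite topology on $\G$, i.e.\ that the preimage of a compact open subgroup such as $\Zl^{\dim V}$ \emph{contains} a finite-index subgroup (being an ``intersection of finite-index subgroups'' is not enough to be open). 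Your justification --- that Zariski-density and $R$-equivariance force the $\ell$-adic closure of the image to be compact --- does not work: Zariski-dense subgroups of $\Ql^n$ can be unbounded, the image of $\G$ is only visibly stable under $\rho(\G)$ rather than under its compact closure, and $R$-stability gives nothing at all when $V$ is a trivial $R$-module, a case that genuinely occurs (e.g.\ the $\Q(1)$ summands in weight $-2$ in this very paper). Indeed, as literally stated the theorem fails without a further hypothesis: take $R$ trivial and $\G=\Z[1/\ell]$; its unipotent completion over $\Ql$ is $\mathbb{G}_a$ and $\tilde\rho$ is the inclusion $\Z[1/\ell]\hookrightarrow\Ql$, which is not continuous since the preimage of $\Zl$ is $\Z$, of infinite index.

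The missing ingredient is finite generation of $\G$ (satisfied in all the applications here, where $\G$ is a hyperelliptic mapping class group with level structure), and once it is in hand the correct mechanism is boundedness, not density. The image of $\rho$ lies in a compact subgroup $K$ of $R(\Ql)$ by continuity, and a subgroup of $U_i(\Ql)\rtimes K$ generated by finitely many elements is bounded: embed $U_i$ in strictly upper-triangular $N\times N$ matrices, observe that the $K$-orbits of the finitely many unipotent components lie in a compact set with entries in some $\ell^{-k}\Zl$, and that any word in such elements has entries in $\ell^{-(N-1)k}\Zl$ because products of $N$ or more strictly upper-triangular factors vanish. Hence the image of $\G$ in $\cG_i(\Ql)$ lies in a compact, totally disconnected --- hence profinite --- subgroup, and continuity for the profinite topology on $\G$ is then automatic because open subgroups of a profinite group have finite index. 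This single boundedness argument also spares you the delicate bookkeeping in your layer-by-layer induction (where the subgroup of $\G$ landing in a deeper term of the lower central series need not itself be finitely generated) and replaces the appeal to ``Mal'cev-type rigidity,'' which as stated does not identify the actual obstruction.
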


\subsection{Applications to the universal hyperelliptic curves}\label{app to hyp univ curve}
Suppose that $g \geq 2$, $n\geq 0$, and $m\geq 1$.  Consider the universal complete hyperelliptic curve $\pi:\cC_{\cH_{g,n}}[m]\to \cH_{g,n}[m]$. Let $\H_\Z$ be the dual of the local system $R^1\pi_\ast\Z$ and $H_\Z$ the fibre  of $\H_\Z$ over $\etabar_n$.  Set $H:=H_\Q =H_\Z\otimes \Q$. We have an isomorphism $\pi_1(\cH_{g,n}[m], \etabar_n)\cong \Delta_{g,n}[m]$, and there is the monodromy representation 
$$
\rho^\hyp[m]: \pi_1(\cH_{g,n}[m], \etabar_n)\to \Sp(H_\Z), 
$$
which agrees with the restriction to $\Delta_{g,n}[m]$ of the natural representation $\rho^\hyp:\Delta_{g,n}\to \Sp(H_\Z)$ defined in \S \ref{monodromy}. When $m=1$, we denote $\rho^\hyp[m]$ by $\rho^\hyp$. Recall that $G_g$, the image of $\rho^\hyp$, contains the principal congruence subgroup $\Sp(H_\Z)[2]$, and hence it is a finite-index subgroup of $\Sp(H_\Z)$, and  so is the image of $\rho^\hyp[m]$. Therefore, the image of $\rho^\hyp[m]$ is Zariski-dense in $\Sp(H)$.  Denote the representation $\pi_1(\cH_{g,n}[m])\to \Sp(H)$ obtained by extending the coefficient to $\Q$ by $\rho^\hyp[m]$ as well. \\
\indent Denote the relative completion of $\pi_1(\cH_{g,n}[m], \etabar_n)$ with respect to $\rho^\hyp[m]$ by $\cD^\geom_{g,n}[m]$ and its prounipotent radical by $\cV^\geom_{g,n}[m]$. Let $\rho^\hyp_\cC[m]:\pi_1(\cC_{\cH_{g,n}}[m])\to\Sp(H)$ be the representation obtained by composing $\rho^\hyp[m]$ with $\pi_\ast:\pi_1(\cC_{\cH_{g,n}}[m])\to \pi_1(\cH_{g,n}[m])$. Since $\pi_\ast$ is surjective, $\rho^\hyp_\cC[m]$ is a Zariski-dense representation. Denote the relative completion of $\pi_1(\cC_{\cH_{g,n}}[m])$ with respect to $\rho^\hyp_{\cC}[m]$ by $\cD^{\geom}_{\cC_{g,n}}[m]$ and its prounipotent radical by $\cV^\geom_{\cC_{g,n}}[m]$. 
Similarly, let $\rho^\hyp_{\cC'}[m]:\pi_1(\cC_{\cH_{g,n}}'[m])\to \Sp(H)$ be the representation obtained by composing $\rho^\hyp[m]$ with $\pi'_\ast: \pi_1(\cC_{\cH_{g,n}}'[m])\to \pi_1(\cH_{g,n}[m])$. Denote the relative completion of $ \pi_1(\cC_{\cH_{g,n}}'[m])$ with respect to $\rho^\hyp_{\cC'}[m]$ by $\cD^\geom_{\cC'_{g,n}}[m]$ and its prounipotent radical  by $\cV^\geom_{\cC'_{g,n}}[m]$. Finally, let $\widehat\pi:\cC^{n}_{\cH_{g}}[m]\to \cH_g[m]$ be the $n$th power of the universal curve $\cC_{\cH_{g}}[m]\to \cH_g[m]$. Denote by $\widehat\cD^\geom_{g, n}[m]$ the relative completion of $\pi_1(\cC^{n}_{\cH_g}[m])$ with respect to the composition of $\rho^\hyp[m]$ with $\widehat \pi_\ast:\pi_1(\cC^{n}_{\cH_g}[m])\to \pi_1(\cH_g[m])$ and its prounipotent radical by $\widehat\cV^\geom_{g,n}[m]$. 
Denote the Lie algebras of $\cD^\geom_{g,n}[m]$, $\cV^\geom_{g,n}[m]$, $\cD^\geom_{\cC_{g,n}}[m]$, $\cV^\geom_{\cC_{g,n}}[m]$, $\cD^\geom_{\cC'_{g,n}}[m]$, $\cV^\geom_{\cC'_{g,n}}$, $\widehat\cD^\geom_{g, n}$, and $\widehat\cV^\geom_{g,n}$ by $\d^\geom_{g,n}[m]$, $\v^\geom_{g,n}[m]$, $\d^\geom_{\cC_{g,n}}[m]$, $\v^\geom_{\cC_{g,n}}[m]$, $\d^\geom_{\cC'_{g,n}}[m]$, $\v^\geom_{\cC'_{g,n}}$,  $\widehat\d^\geom_{g, n}[m]$, and $\widehat\v^\geom_{g,n}[m]$, respectively. \\

\subsection{Continuous completion}\label{cont hyp rep} Let $\ell$ be a prime number. Let $\H_\Ql$ be the sheaf $R^1\pi_\ast\Ql(1)$ over $\cH_{g,n/\C}[m]$. Note that the fiber of $\H_\Ql$ over $\etabar_n$ is isomorphic to $H_\Ql =H_1(C,\Z)\otimes \Ql$. Then we have the monodromy representation 
$$
\rho^\hyp_\ell[m]:\pi_1^\alg(\cH_{g,n/ \C}[m], \etabar_n)\to \Sp(H_\Ql).
$$
Consider $\rho^\hyp[m]$ in \S \ref{app to hyp univ curve} as a homomorphism into $\Sp(H_\Ql)$ by extending the coefficient to $\Ql$. We observe that the homomorphism $\rho^\hyp_\ell[m]$ agrees with the continuous extension of $\rho^\hyp[m]$ to $\pi_1^\alg(\cH_{g,n/\C}[m])\cong \widehat{\pi_1(\cH_{g,n}[m])}$. The continuous relative completion of  $\pi_1^\alg(\cH_{g,n/\C}[m],\etabar_n)$ with respect to $\rho^\hyp_\ell[m]$ is $\cD^\geom_{g,n}[m]\otimes_\Q \Ql$. Similarly, the continuous relative completions of $\pi_1(\cC_{\cH_{g,n/\C}}[m], \etabar_{n+1})$, $\pi_1(\cC_{\cH_{g,n/\C}}'[m], \etabar_{n+1})$,  and $\pi_1(\cC^n_{\cH_{g/\C}}[m], \etabar_n)$ are obtained from $\d^\geom_{\cC_{g,n}}[m]$, $\d^\geom_{\cC'_{g,n}}[m]$, and $\widehat\d^\geom_{g,n}[m]$ by base change to $\Ql$, respectively.

\subsection{Exact sequences of completions} 
\indent Let $\cP_j$ and $\cP'$  be the unipotent completions of $\pi_1(C, \bar x_j)$ and $\pi_1(C', \bar x_0)$ over $\Q$. Denote the Lie algebras of $\cP_j$ and  $\cP'$ by  $\p_j$ and $\p'$, respectively. The Lie algeras $\p$, $\p_j$, and $\p_{g,n}$ have trivial center \cite{ntu} and the Lie algebras $\p'$ and $\p^o$ are free and therefore have trivial center. 
The center-freeness of these Lie algebras and the right exactness of relative completion \cite[Prop.~3.7]{hain_relati} imply that the exact sequences (\ref{punct homo seq}), (\ref{1 punct homo seq}), (\ref{complete homo seq}), (\ref{n+1th power over nth seq }), (\ref{univ seq with conf fiber}), and (\ref{nth power seq}) induce the exact sequences of Lie algebras of completions: 
\begin{equation*}
0\to \p^o\to \d^\geom_{g, n+1}[m]\to \d^\geom_{g,n}[m]\to 0,
\end{equation*}
\begin{equation*}
0\to \p'\to \d^\geom_{\cC'_{g,n}}[m]\to \d^\geom_{g,n}[m]\to 0, 
\end{equation*}
\begin{equation*}
0\to \p\to \d^\geom_{\cC_{g,n}}[m]\to \d^\geom_{g,n}[m]\to 0, 
\end{equation*}
\begin{equation*}
0\to \p\to \widehat\d^\geom_{g,n+1}[m]\to \widehat\d^\geom_{g,n}[m]\to 0, 
\end{equation*}
\begin{equation*}
0\to\p_{g,n}\to \d^\geom_{g,n}[m]\to \d^\geom_g[m]\to 0,
\end{equation*}
and 
\begin{equation*}
0\to\bigoplus_{j=1}^n\p_j\to\widehat\d^\geom_{g,n}[m]\to \d^\geom[m]\to 0.
\end{equation*}
Therefore, the commutative diagrams (\ref{comm diag fund grp}) and (\ref{comm diag fund grp with the nth power}) produce the following commutative diagrams of Lie algebras of completions:
\begin{proposition}\label{commu diagram for lie alg}
Suppose that $g \geq 2$. Applying relative completion to the diagrams (\ref{comm diag fund grp}) and (\ref{comm diag fund grp with the nth power}), we obtain the commutative diagrams:
$$
\xymatrix@R=1em@C=2em{
0\ar[r]&\p^o\ar[r]\ar@{=}[d]&\p_{g, n+1}\ar[r]\ar@{^{(}->}[d]  &\p_{g,n}\ar[r]\ar@{^{(}->}[d]        &0\\
0\ar[r]&\p^o\ar[r]  \ar[d] &\d^\geom_{g,n+1}[m]\ar[r]\ar[d] & \d^\geom_{g,n}[m]\ar[r]\ar@{=}[d]  &0\\
0\ar[r]&\p'\ar[r]\ar[d] &\d^\geom_{\cC'_{g,n}}[m]\ar[r]\ar[d]    & \d^\geom_{g,n}[m]\ar[r]\ar@{=}[d]&0\\
0\ar[r]&\p\ar[r]\ar@{=}[d] &\d^\geom_{\cC_{g,n}}[m]\ar[r]\ar[d]    & \d^\geom_{g,n}[m]\ar[r]\ar[d]&0\\
0\ar[r]&\p\ar[r]  &\widehat\d^\geom_{g,n+1}[m]\ar[r]    & \widehat\d^\geom_{g,n}[m]\ar[r]&0
}
$$
and 
$$
\xymatrix@R=1em@C=2em{
0\ar[r]&\p_{g,n}\ar[r]\ar[d]& \d^\geom_{g,n}[m]\ar[r]\ar[d]  &\d^\geom_g[m]\ar[r]\ar@{=}[d]        &0\\
0\ar[r]&\bigoplus_{j=1}^n\p_j\ar[r]  \ar@{=}[d]      &\widehat\d^\geom_{g,n}[m]\ar[r]\ar[d] & \d^\geom_g[m]\ar[r]\ar[d] ^{\mathrm{diag}} &0\\
0\ar[r]&\bigoplus_{j=1}^n\p_j\ar[r]                    &\bigoplus_{j=1}^n\d^{\geom(j)}_{g,1}[m]\ar[r]    & (\d^\geom_{g}[m])^n\ar[r]&0
}
$$
where rows are exact and $\d^{\geom(j)}_{g,1}[m]$ is the Lie algebra of the completion of $\pi_1(\cH_{g,1}[m], [C;\bar x_j])$.  The same result holds when replacing $\d^\geom_g[m]$, $\d^\geom_{g,n}[m]$, $\d^\geom_{\cC'g,n}[m]$, $\d^\geom_{\cC_{g,n}}[m]$, and $\widehat\d^\geom_{g,n}[m]$ with $\v^\geom_g[m]$, $\v^\geom_{g,n}[m]$, $\v^\geom_{\cC'g,n}[m]$, $\v^\geom_{\cC_{g,n}}[m]$, and $\widehat\v^\geom_{g,n}[m]$, respectively.  
\qed
\end{proposition}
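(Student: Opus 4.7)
The six short exact sequences of Lie algebras listed immediately before the statement already provide the rows of the first diagram (and the corresponding three of the second), so what remains is commutativity of the squares and the analogous claims for the prounipotent radicals $\v^\geom$. The plan is first to deduce commutativity by functoriality of relative completion, and then to transfer the statements to the prounipotent radicals using the Levi decomposition.

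For commutativity, I would use that any morphism of pairs $(\G_1,\rho_1)\to(\G_2,\rho_2)$, with both $\rho_i$ Zariski-dense representations to $\Sp(H)$ and the obvious triangle commuting, yields by the universal property of relative completion a canonical morphism $\cD_1\to\cD_2$ of proalgebraic $\Q$-groups lifting the identity on $\Sp(H)$; passing to Lie algebras gives the required arrow. Applying this construction to each square of (\ref{comm diag fund grp}) and (\ref{comm diag fund grp with the nth power})—all of which commute by construction, since the projections (universal curve, puncturing, taking powers, configuration inclusion) intertwine the chosen symplectic representations—produces the analogous squares of Lie algebras, and uniqueness in the universal property forces the two compositions around each square to agree.

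For the $\v^\geom$-versions, I would exploit the splitting $\cD^\geom \cong \cV^\geom \rtimes \Sp(H)$ supplied by the generalized Levi theorem cited in the paper, together with the observation that every induced arrow in both diagrams is compatible with the reductive quotient $\Sp(H)$ (or, in the bottom row of the second diagram, with the diagonal $\Sp(H)\hookrightarrow \Sp(H)^n$). Restricting each morphism to the kernel of the projection onto the reductive part yields the claimed diagram of $\v^\geom$'s. Exactness of the resulting rows is automatic: each fiber Lie algebra on the left ($\p$, $\p^o$, $\p'$, $\p_{g,n}$, or $\bigoplus_j\p_j$) is already prounipotent and therefore lands in the prounipotent radical, while the induced map on the reductive quotients is either the identity or the diagonal, so a short diagram chase (or the nine lemma applied to the Levi decomposition of each column) recovers the required short exact sequence.

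The main obstacle is in fact largely behind us: left exactness of each $\d^\geom$-row rests on the center-freeness of $\p$, $\p^o$, $\p'$, and $\p_{g,n}$ combined with right exactness of relative completion, and this has been handled before the proposition. What remains is essentially bookkeeping—verifying that each square of fundamental groups respects the chosen symplectic representations so that functoriality of relative completion applies with a common target $\Sp(H)$—which is routine, and then a single invocation of the Levi decomposition to descend to the $\v^\geom$'s.
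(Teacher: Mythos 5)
Your proposal is correct and follows essentially the same route as the paper: the paper's own justification is the paragraph preceding the proposition (center-freeness of $\p$, $\p'$, $\p^o$, $\p_{g,n}$ plus right exactness of relative completion give the exact rows, and functoriality of relative completion gives the commuting squares), and your treatment of the $\v^\geom$-versions via the Levi splitting is the standard way to descend to the prounipotent radicals, which the paper leaves implicit. No gaps.
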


\subsection{Weight filtrations on the Lie algebras of relative completions}
Suppose that $F =\Q$ or $\mathbb{R}$. Suppose that $X$ is a smooth quasi-projective complex variety and that $\V$ is a polarized variation of $F$-HS over $X$ of geometric origin.  Denote by $R$ the Zariski closure of the image of the monodromy representation  
	$$\rho_x:\pi_1(X,x)\to \Aut(\V_x, \langle~,~\rangle).$$ It is a reductive group over $F$. Denote the relative completion of $\pi_1(X, x)$ with respect to $\rho_x:\pi_1(X,x)\to R(F)$ by $\cG$ 
	and the prounipotent radical of $\cG$ by $\U$. Denote the Lie algebras of $\cG$, $\U$, and $R$ by $\g$, $\u$, and $\r$, respectively. 
\begin{theorem}[{\cite[Thm.~13.1]{hain_hodge_rel}}]\label{hs on relative completion}
The coordinate rings $\O(\cG)$ and $\O(\U)$ of $\cG$ and its prounipotent radical $\U$, respectively, are Hopf algebras that admit natural $F$-MHSs satisfying the property that $W_{-1}\O(\cG) = 0$ and $W_0\O(\cG) = \O(R)$. Consequently, the Lie algebras $\g$ and $\u$ admit natural $F$- MHSs, where brackets are morphisms of MHSs. These Lie algebras satisfy the property that
$$
\g = W_0\g, \hspace{.1in} \u = W_{-1}\g, \hspace{.1in}  \Gr^W_0\g =\r.
$$
\end{theorem}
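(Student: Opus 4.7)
The plan is to construct the MHS on $\O(\cG)$ via the Tannakian interpretation of $\cG$ together with Hain's theory of iterated integrals valued in variations of MHS. By construction, $\cG$ is the Tannakian fundamental group (at the fiber over $x$) of the category $\cC$ of unipotent local systems on $X$ equipped with a compatible action of $R$; every object of $\cC$ is built from iterated extensions of local systems associated to representations of $R$ applied to $\V$. Because $\V$ is a polarized VHS of geometric origin, every such object canonically underlies an admissible variation of MHS on $X$ whose graded pieces are pure variations coming from $R$-representations acting on $\V$. This relies on combining the Hain--Zucker theory of iterated integrals with coefficients in a VHS with Saito's theory of admissible VMHS.

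Second, the matrix coefficients of the universal representation of $\cG$ can be realized as iterated integrals of logarithmic forms with coefficients in the universal object of $\cC$. The weight and Hodge filtrations on these iterated integrals transport to filtrations on $\O(\cG)$, and compatibility with the Hopf-algebra structure (product, coproduct, antipode) follows from the shuffle and composition-of-paths identities for iterated integrals. The vanishing $W_{-1}\O(\cG) = 0$ is a positivity statement reflecting that any extension in $\cC$ arises from admissible extensions of VHS pure of weight $0$ by VHS pure of weight $0$, and the identification $W_0\O(\cG) = \O(R)$ records that pure weight $0$ matrix coefficients of $\cG$ are precisely those factoring through $R$ (whose representations carry pure weight $0$ Hodge structures because $\V$ does).

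Third, dualize. The Lie algebra $\g$ is the continuous dual of the augmentation ideal of $\O(\cG)$ modulo its square, and $\u$ is dual to the further quotient by the image of the augmentation ideal of $\O(R)$. Under the convention that weights are negated on duals, the vanishing $W_{-1}\O(\cG) = 0$ yields $W_0\g = \g$, while $W_0\O(\cG) = \O(R)$ gives both $\Gr^W_0 \g = \r$ and $W_{-1}\g = \u$. The Lie bracket is a morphism of MHS because it is dual to the coproduct, which is so by construction. The main obstacle is the first step: verifying that the candidate VMHS on each object of $\cC$ is admissible in Saito's sense, and that this assignment is functorial and tensor-compatible, so that the resulting filtrations descend coherently to Hopf-algebra MHS filtrations on $\O(\cG)$. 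This synthesis of Morgan's Hodge theory of iterated integrals with Saito's admissibility framework is the technical heart of Hain's construction.
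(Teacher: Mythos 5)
This statement is quoted in the paper directly from Hain's \emph{The Hodge De Rham theory of relative Malcev completion} (the citation \cite[Thm.~13.1]{hain_hodge_rel}); the paper offers no proof of its own, only a remark on descending the $\mathbb{R}$-structure to $\Q$ and extending to orbifolds. So the comparison can only be with Hain's original argument, and your outline does track its architecture: $\O(\cG)$ is realized as $H^0$ of a reduced bar construction on forms with coefficients in the flat bundle attached to $\O$ of the monodromy $R$-torsor (equivalently, iterated integrals with VHS coefficients in the sense of Hain--Zucker), the Hodge and weight filtrations are put on this bar construction as a multiplicative mixed Hodge complex, compatibility with product, coproduct and antipode comes from the shuffle and path-composition identities, and the statements about $\g$ and $\u$ follow by dualizing indecomposables. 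To that extent the approach is the right one.

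Two caveats. First, your justification of $W_{-1}\O(\cG)=0$ is not the operative mechanism. The graded pieces of objects of your category $\cC$ are the pure variations $\V_\alpha$ attached to irreducible $R$-representations, and these are generally \emph{not} of weight $0$; what makes the weights of $\O(\cG)$ nonnegative is that $\Hom$'s between these graded pieces (matrix coefficients, living in $V_\alpha\otimes V_\alpha^\ast$) have weight $0$, while each length-$s$ layer of the bar filtration is built from tensor factors of the form $H^1(X,\V_\alpha)\otimes V_\alpha^\ast$, and for $X$ smooth $H^1(X,\V_\alpha)$ has weights at least one more than the weight of $\V_\alpha$. It is this weight shift by $H^1$ of a smooth variety, propagated through the bar filtration, that forces $W_{-1}\O(\cG)=0$ and identifies $W_0\O(\cG)$ with $\O(R)$; "extensions of weight $0$ by weight $0$" is not an accurate description of the category. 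Second, your sketch explicitly defers the substantive content --- that the candidate filtrations on the bar construction actually define a mixed Hodge structure on $H^0$, that the construction is independent of choices and functorial, and that the coefficient variations are admissible so the machinery applies --- which is the bulk of Hain's paper. As a roadmap to the cited result your proposal is essentially correct; as a proof it is not self-contained, and the one place where you do offer a reason (the vanishing of $W_{-1}$) the reason given is not the right one.
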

\begin{remark}
	The original statement is stated over $\mathbb{R}$. However, if the Zariski-closure of the image of $\rho_x$ is defined over $\Q$, the $\mathbb{R}$-structure canonically lifts to a $\Q$-structure. For a concrete explanation, see  \cite[4.3]{hain_hodge_modular}. Furthermore, the results also extend for orbifolds. 
\end{remark}
\begin{corollary} \label{rel comp mhs iso}
If $\V$ is a polarized variation of Hodge structure (PVHS) over $X$ with fiber $V_x$ over the base point $x$, then the composite
$$
(H^l(\u)\otimes V_x)^R\to H^l_\cts(\pi_1(X, x), V_x)\to H^l(X,\V)
$$
with the canonical homomorphism is a morphism of MHS. It is an isomorphism for $l\leq 1$ and an injection for $l =2$. 
\end{corollary}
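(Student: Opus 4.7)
The plan is to factor the composite and analyze each arrow separately. The first arrow
$$
(H^l(\u)\otimes V_x)^R \to H^l_\cts(\pi_1(X, x), V_x)
$$
is exactly the map produced by Theorem \ref{rel compl cohom iso}, applied to the finite dimensional $R$-module $V_x$. (Since $R$ is defined as the Zariski closure of the monodromy, $V_x$ is indeed an $R$-module.) By that theorem the arrow is an isomorphism for $l = 0, 1$ and an injection for $l = 2$.

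The second arrow $H^l_\cts(\pi_1(X, x), V_x) \to H^l(X, \V)$ is the standard comparison from group cohomology to sheaf cohomology of the associated local system. Using the Leray spectral sequence for the classifying map $X \to B\pi_1(X, x)$ (or equivalently the Cartan--Leray sequence for the universal cover), one sees that this map is an isomorphism in degrees $0$ and $1$ and is injective in degree $2$, the obstruction in degree $2$ being $H^1(\pi_1(X, x), H^1(\widetilde{X}, V_x))$. Composing the two arrows therefore yields the claimed isomorphism range and injectivity in degree $2$.

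It remains to promote this composite to a morphism of mixed Hodge structures. The left hand side carries a MHS as follows: by Theorem \ref{hs on relative completion} the Lie algebra $\u$ admits a natural MHS, hence so does the continuous cohomology $H^l(\u)$; the fiber $V_x$ is a MHS because $\V$ is a PVHS; the tensor product and the $R$-invariants are therefore MHSs. The right hand side carries its Deligne--Saito MHS. The MHS compatibility is not formal and is the main obstacle: one must appeal to Hain's construction of the MHS on $\u$ in \cite{hain_hodge_rel}, which is engineered precisely so that the cohomological comparison of Theorem \ref{rel compl cohom iso} is Hodge-theoretic. Concretely, Hain realises $H^\bullet(\u)$ through a bar-type complex computing the cohomology of $X$ with coefficients in iterated tensor products of local systems, and the map to $H^\bullet(X, \V)$ is induced by evaluation on $\V$; both the weight and Hodge filtrations are defined on this complex so that the evaluation map strictly respects them. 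The extension of this comparison from smooth varieties to the orbifolds relevant to us is noted in the remark following Theorem \ref{hs on relative completion}. Once these Hodge-theoretic identifications are in place, combining them with the iso/injection statements above completes the proof.
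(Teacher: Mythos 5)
The paper states this corollary without proof, as a direct consequence of Hain's results, so there is no in-text argument to compare against; your reconstruction is the standard one and is essentially correct. The factorization into the map of Theorem \ref{rel compl cohom iso} followed by the comparison $H^l_\cts(\pi_1(X,x),V_x)\to H^l(X,\V)$ is the right decomposition, and the appeal to Hain's bar-construction description of $H^\bullet(\u)$ in \cite{hain_hodge_rel} is exactly where the Hodge-theoretic compatibility lives. One small imprecision: in the Cartan--Leray spectral sequence $E_2^{p,q}=H^p\bigl(\pi_1(X,x),H^q(\widetilde{X},V_x)\bigr)$, the obstruction to injectivity of $H^2(\pi_1(X,x),V_x)\to H^2(X,\V)$ is the image of the differential out of $E_2^{0,1}=H^0\bigl(\pi_1(X,x),H^1(\widetilde{X},V_x)\bigr)$, not the term $E_2^{1,1}$ you name (that term only obstructs surjectivity in degree $2$, which is not claimed). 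The clean way to finish both the degree-$1$ isomorphism and the degree-$2$ injectivity of the second arrow is to observe that $H^1(\widetilde{X},V_x)=\Hom(H_1(\widetilde{X}),V_x)=0$ because $\widetilde{X}$ is simply connected, so the entire row $q=1$ of the $E_2$ page vanishes; you use this implicitly for the degree-$1$ statement but should state it. With that fixed, your argument is complete.
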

The adjoint map $\ad: \g\to \Der \g$ is a morphism of MHSs, where $\Der\g=W_0\Der\g$ is the derivation algebra of $\g$. Since the sequence 
$$
0\to\u\to \g\to\r\to0
$$
is exact, it follows that each graded quotient $\Gr^W_l\g$ is a direct product finite dimensional $\r$-modules. 
Denote the Lie algebra of $\Sp(H)$ by $\s\p(H)$. The representation theory of $\Sp(H)$ and $\s\p(H)$ are equivalent.
\begin{corollary} Suppose that $g \geq 2$, $n\geq 0$,  and $m\geq 1$. 
 The Lie algebra $\d^\geom_{g,n}[m]$ is a MHS, and satisfies the property:
$$
\d^\geom_{g,n}[m] =W_0\d^\geom_{g,n}[m], \,\,\v^\geom_{g,n}[m] = W_{-1}\d^\geom_{g,n}[m] \text{ and }\Gr^W_0\d^\geom_{g,n}[m] = \s\p(H).
$$
 Each graded quotient $\Gr^W_m\d^\geom_{g,n}$ is a direct product of finite dimensional $\Sp(H)$-modules. The similar property holds for $\d^\geom_{\cC_{g,n}}[m]$, $\d^\geom_{\cC'_{g,n}}[m]$, and $\widehat\d^\geom_{g,n}[m]$
\end{corollary}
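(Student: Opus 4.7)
The plan is to apply Theorem \ref{hs on relative completion} directly, with an appropriate PVHS as input, to each of the four completions in question.

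For $\d^\geom_{g,n}[m]$, I would take $\V$ to be the PVHS over the orbifold $\cH_{g,n}[m]$ associated to the universal complete hyperelliptic curve $\pi:\cC_{\cH_{g,n}}[m]\to \cH_{g,n}[m]$; concretely, $\V = R^1\pi_\ast\Q$, polarized by the cup product. This is a PVHS of geometric origin whose fiber over $\etabar_n$ is canonically identified (via the polarization) with $H = H_1(C,\Q)$, and whose monodromy representation is $\rho^\hyp[m]$. By A'Campo's theorem (Section \ref{def of hyp map grps}), $G_g$ contains $\Sp(H_\Z)[2]$, so $G_g[m]$ has finite index in $\Sp(H_\Z)$ and its image in $\Sp(H)$ is Zariski-dense. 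Hence the Zariski closure $R$ of the monodromy is the reductive $\Q$-group $\Sp(H)$, and Theorem \ref{hs on relative completion} applies directly, producing the MHS on $\d^\geom_{g,n}[m]$ together with the identifications $\d^\geom_{g,n}[m] = W_0\d^\geom_{g,n}[m]$, $\v^\geom_{g,n}[m] = W_{-1}\d^\geom_{g,n}[m]$, and $\Gr^W_0\d^\geom_{g,n}[m] = \s\p(H)$.

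The statement about graded pieces is then immediate from the discussion preceding the corollary: a splitting (unique up to inner automorphism) of $0\to\v^\geom_{g,n}[m]\to\d^\geom_{g,n}[m]\to\s\p(H)\to 0$ makes each $\Gr^W_l\d^\geom_{g,n}[m]$ into a module over the reductive group $\Sp(H)$, hence a direct product of finite-dimensional irreducible $\Sp(H)$-modules. For the remaining three Lie algebras, I would pull back $\V$ along the natural structure morphisms: $\pi$ for $\d^\geom_{\cC_{g,n}}[m]$, the restriction $\pi'$ to the complement of $s_1$ for $\d^\geom_{\cC'_{g,n}}[m]$, and the $n$th-power projection $\widehat\pi$ for $\widehat\d^\geom_{g,n}[m]$. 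The pullback is again a PVHS of geometric origin, and since each of the induced maps on fundamental groups is surjective (as used throughout Section \ref{app to hyp univ curve}), the pulled-back monodromy remains Zariski-dense in $\Sp(H)$. Applying Theorem \ref{hs on relative completion} to each of these settings yields the analogous MHS and filtration identifications.

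No serious obstacle is anticipated: the corollary is essentially a direct packaging of the cited theorem. The only minor subtlety is that $\cH_{g,n}[m]$ (and its universal curves) is only a smooth quasi-projective variety when $m\geq 3$; for smaller $m$ these spaces are genuine orbifolds. This is explicitly handled by the remark following Theorem \ref{hs on relative completion}, which notes that Hain's construction of the MHS on the relative completion extends to the orbifold setting, so the argument goes through uniformly in $m$.
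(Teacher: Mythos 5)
Your proposal is correct and matches the paper's (implicit) argument: the corollary is stated without proof precisely because it is a direct application of Theorem \ref{hs on relative completion} (and the following remark for the orbifold/$\Q$-structure issues) to the PVHS $R^1\pi_\ast\Q$ and its pullbacks, using the Zariski-density of the monodromy established in \S\ref{app to hyp univ curve}. Your identification of $R=\Sp(H)$ via A'Campo's theorem and your handling of the $m\le 2$ orbifold case are exactly the points the paper relies on.
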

Furthermore, by the naturality properties \cite[Thm.~13.12]{hain_infini} of the MHS of relative completions, the projections $$
\d^\geom_{g, n+1}[m]\to \d^\geom_{g,n}[m],\,\,\,\, \d^\geom_{\cC'_{g,n}}[m]\to \d^\geom_{g,n}[m],\,\,\,\, \d^\geom_{\cC_{g,n}}[m]\to \d^\geom_{g,n}[m],  
$$
 $$
\widehat\d^\geom_{g,n+1}\to\widehat\d^\geom_{g,n},\,\,\,\,\d^\geom_{g,n}\to \d^\geom_g,\text{ and } \widehat\d^\geom_{g,n}[m]\to \d^\geom_g[m]
$$
 are morphisms of MHSs. \\
\indent Recall that the Lie algebras $\p$, $\p'$, $\p^o$, and $\p_{g,n}$ 
admit natural weight filtrations as MHSs constructed by Morgan in \cite{morgan} and Hain in \cite{hain_deRham}.  On the other hand, these Lie algebras also admit weight filtrations being the kernel of the natural surjections given in Proposition \ref{commu diagram for lie alg}. By the naturality properties \cite[Thm.~13.12]{hain_infini} of the MHS of relative completions, these weight filtrations agree. Therefore, the adjoint maps
$$
\d^\geom_{g, n+1}[m]\to \Der\p^o,\,\, \d^\geom_{\cC g, n}[m]\to \Der \p,\,\, \widehat\d^\geom_{g, n+1}[m]\to \Der \p\text{ and } \d^\geom_{g,n}[m]\to \Der\p_{g,n}
$$
are morphisms of MHSs with respect to the MHSs determined by relative completion.

\subsection{The abelianization $H_1(\v^\geom_g[m])$} For each partition $\alpha$, the irreducible representation $V_\alpha$ of $\Sp(H)$ defines a HS of weight $-|\alpha|$, where $|\alpha|:=\sum_{i=1}^g\alpha_i$.  When $g=2$, it follows from \cite[Thm.~5.1]{petersen} that $H_1(\v^\geom_2)$ is pure of weight $-2$, and that it is generated by $V_{2+2}(-1)$. In general, we have
\begin{proposition}\label{pure weight -2}
If $g\geq 2$ and $m\geq 1$, $H_1(\v^\geom_g[m])$ is pure of weight $-2$. 
\end{proposition}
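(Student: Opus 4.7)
The plan is to compute $H_1(\v^\geom_g[m])$ using the cohomological formula of Theorem \ref{rel comp iso for sp} and analyze the weights via a Gysin argument combined with the Brendle--Margalit--Putman theorem and Borel's vanishing. Applying Theorem \ref{rel comp iso for sp} to the monodromy representation $\rho^\hyp[m]: \pi_1(\cH_g[m], \etabar_0) \to \Sp(H)$ yields a canonical $\Sp(H)$-equivariant isomorphism of MHSs
$$
H^1(\v^\geom_g[m]) \cong \bigoplus_{\alpha} H^1(\cH_g[m], \V_\alpha) \otimes V_\alpha^{\ast},
$$
where $\V_\alpha$ is the PVHS on $\cH_g[m]$ of weight $-|\alpha|$ with fiber $V_\alpha$. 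Since $V_\alpha^{\ast}$ has weight $|\alpha|$, dualizing reduces the proposition to showing that $H^1(\cH_g[m], \V_\alpha)$ is pure of weight $2-|\alpha|$ for every partition $\alpha$ that contributes.

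The local system $\V_\alpha$ extends to a PVHS $\V_\alpha^c$ on the compact-type closure $\cH_g^c[m]$, since $R^1\pi_\ast\Q$ extends via the direct sum of the first cohomologies of the components of a stable compact-type hyperelliptic curve. The Gysin exact sequence for the open immersion $\cH_g[m] \hookrightarrow \cH_g^c[m]$ then furnishes an exact sequence of MHSs
$$
0 \to H^1(\cH_g^c[m], \V_\alpha^c) \to H^1(\cH_g[m], \V_\alpha) \to H^0(\partial \cH_g^c[m], \V_\alpha^c|_{\partial})(-1),
$$
whose rightmost term is pure of weight $2-|\alpha|$ (the Tate twist of the pure weight $-|\alpha|$ invariants). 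It therefore suffices to show $H^1(\cH_g^c[m], \V_\alpha^c) = 0$. Here the crucial topological input is Brendle--Margalit--Putman's theorem that each component of $\T_g^{\hyp,c}$ is simply connected, so that $\pi_1(\cH_g^c[m]) = G_g[m]$ and hence $H^1(\cH_g^c[m], \V_\alpha^c) \cong H^1(G_g[m], V_\alpha)$. For $g \geq 2$, $G_g[m]$ is a finite-index subgroup of the arithmetic group $\Sp(H_\Z)$, which has real rank $g \geq 2$, so Borel's vanishing theorem for nontrivial irreducible algebraic $\Sp$-representations, together with finiteness of the abelianization of arithmetic subgroups of $\Sp_{2g}(\Z)$ for $g \geq 2$ (handling $\alpha = 0$), gives $H^1(G_g[m], V_\alpha) = 0$ for every $\alpha$.

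The principal technical obstacle is making the Gysin analysis rigorous in the orbifold/NCD setting: the boundary $\partial\cH_g^c[m]$ is typically a normal-crossing divisor rather than smooth, and $\cH_g^c[m]$ is an orbifold rather than a smooth projective variety. This is handled by passing to a sufficiently deep level cover $\cH_g^c[m']$ on which $G_g[m']$ acts freely, taking a log resolution of the boundary to make the Gysin/weight spectral sequence apply cleanly, and then descending via transfer for the finite quotient by $G_g[m]/G_g[m']$. Conceptually the picture is clean: the compact-type closure $\cH_g^c[m]$ is a $K(G_g[m],1)$ up to issues in higher degrees, its cohomology with algebraic $\Sp$-coefficients vanishes by Borel, and the remaining classes in $H^1(\cH_g[m], \V_\alpha)$ come from boundary divisors of weight $2-|\alpha|$, which is precisely what generates $H_1(\v^\geom_g[m])$ in weight $-2$.
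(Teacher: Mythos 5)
Your proposal is correct and follows essentially the same route as the paper: the Gysin sequence for $\cH_g[m]\hookrightarrow\cH^c_g[m]$, the Brendle--Margalit--Putman simple-connectivity result to identify $H^1(\cH^c_g[m],\V_\alpha)$ with $H^1(G_g[m],V_\alpha)$, a vanishing theorem for arithmetic groups (the paper cites Raghunathan where you cite Borel), purity of the boundary term $H^0(D^{\mathrm{sm}},\V_\alpha(-1))$, and descent to $m\leq 2$ by a finite-quotient/transfer argument. The only cosmetic difference is that the paper sidesteps your log-resolution step by simply deleting the singular locus of the boundary divisor, which has codimension $\geq 2$ and so does not affect the relevant low-degree cohomology.
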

\begin{proof}
Suppose that $m\geq 3$. In this case, $\cH^{c}_g[m]$ and $\cH_g[m]$ are smooth complex varieties. Let $V_\alpha$ be a finite dimensional irreducible representation of $\Sp(H)$ corresponding to a partition $\alpha$. Let $\V_\alpha$ be the VHS over $\cH^{c}_g[m]$ corresponding to $V_\alpha$. The complement of $\cH_g[m]$ in $\cH^{c}_g[m]$ is a divisor, which we denote by $D :=\cH^{c}_g[m] - \cH_g[m]$. Let $D^\mathrm{sing}$ be the singular locus of $D$. It has codimension at least $2$ in $\cH^{c}_g[m]$.  Let $D^\mathrm{sm}=D-D^\mathrm{sing}=\cup_iD_i$, where $D_i$ is an irreducible component of $D^\mathrm{sm}$. Then associated to the inclusion $\cH_g[m]\to \cH^{c}_g[m]$, there is a Gysin sequence 
$$
0\to H^1(\cH^{c}_g[m],\V_\alpha)\to H^1(\cH_g[m],\V_\alpha)\to H^0(D^\mathrm{sm},\V_\alpha(-1))=\bigoplus_{i}H^0(D_i,\V_\alpha(-1)).
$$
Since $\cH^{c}_g[0]$ is simply-connected by a result in \cite{BMP}, it follows that there is an isomorphism
$$
H^1(\cH^{c}_g[m],\V_\alpha)\cong H^1(G_g[m],V_\alpha).
$$
Raghunathan's vanishing theorem \cite{raghu} implies that $H^1(G_g[m], V_\alpha) =0$, and hence the cohomology $H^1(\cH_g[m],\V_\alpha)$ is a HS of weight $2 -|\alpha|$. By Corollary \ref{rel comp mhs iso}, $H_1(\v^\geom_g[m])$ is pure of weight $-2$. \\
\indent For the case $m\leq 2$, suppose that  $m_1, m_2$ are  positive integers such that $m_1\leq 2$, $m_2\geq 3$,  and $m_1|m_2$.   Let $Q[m_2/m_1]$ be the finite quotient of $\Delta_g[m_1]$ by $\Delta_g[m_2]$. From the spectral sequence associated to the exact sequence 
$$
1 \to \Delta_g[m_2]\to \Delta_g[m_1] \to Q[m_2/m_1]\to 1,
$$ 
it follows that the restriction map induces a natural isomorphism
$$
H^1(\Delta_g[m_1],V_\alpha)\to H^1(\Delta_g[m_2], V_\alpha)^{Q[m_2/m_1]}.
$$
By Theorem \ref{rel comp iso for sp}, there is a commutative diagram
$$
\xymatrix@R=2em@C=2em{
H^1(\v^\geom_g[m_1])\ar[d]_\cong\ar[r]&H^1(\v^\geom_g[m_2])\ar[d]^\cong\\
\bigoplus_{\alpha}H^1(\Delta_g[m_1], V_\alpha)\otimes V_\alpha^\ast\ar[r]&\bigoplus_{\alpha}H^1(\Delta_g[m_2], V_\alpha)\otimes V_\alpha^\ast,
}
$$
where the top map is induced by the relative completion applied to $\Delta_g[m_2]\to \Delta_g[m_1]$. 
The bottom map is injective, and so is the top map. Therefore, by taking the full dual, the natural map $H_1(\v^\geom_g[m_2])\to H_1(\v^\geom_g[m_1])$ is surjective. This surjection is a morphism of MHS, and hence $H_1(\v^\geom_g[m_1])$ is also pure of weight $-2$. 
\end{proof}
As a consequence, we obtain the following description of $\Gr^W_{\bullet}\v^\geom_g[m]$.
\begin{corollary}\label{abelian weight -2}
If $g \geq 2$ and $m\geq 1$, then $\Gr^W_{-2l-1}\v^\geom_g[m]=0$ for $l\geq 0$ and
$$
\Gr^W_{-2}\v^\geom_g[m] =H_1(\v^\geom_g[m])
$$
\end{corollary}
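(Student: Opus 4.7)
The corollary is a direct consequence of Proposition \ref{pure weight -2} together with the general structure of the weight filtration on a pronilpotent MHS Lie algebra, so my plan is to deduce it in three short steps from Proposition \ref{pure weight -2} and Theorem \ref{hs on relative completion}.

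First I would upgrade the statement $\v^\geom_g[m] = W_{-1}\v^\geom_g[m]$ from Theorem \ref{hs on relative completion} to $\v^\geom_g[m] = W_{-2}\v^\geom_g[m]$. The projection $\v^\geom_g[m] \twoheadrightarrow H_1(\v^\geom_g[m])$ is a morphism of MHS, and the target is pure of weight $-2$ by Proposition \ref{pure weight -2}. Strictness of morphisms of MHS with respect to $W_\bullet$ gives that $W_{-2}\v^\geom_g[m]$ surjects onto $H_1(\v^\geom_g[m])$, so $\v^\geom_g[m] = W_{-2}\v^\geom_g[m] + [\v^\geom_g[m], \v^\geom_g[m]]$. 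Because the bracket is a morphism of MHS, $[W_{-1}, W_{-1}] \subseteq W_{-2}$, so $[\v^\geom_g[m], \v^\geom_g[m]] \subseteq W_{-2}\v^\geom_g[m]$, whence $\v^\geom_g[m] = W_{-2}\v^\geom_g[m]$. In particular $\Gr^W_{-1}\v^\geom_g[m] = 0$.

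Next, I would pass to the associated graded. Since morphisms of MHS are strict, applying $\Gr^W_\bullet$ to the exact sequence
\[
0 \to [\v^\geom_g[m], \v^\geom_g[m]] \to \v^\geom_g[m] \to H_1(\v^\geom_g[m]) \to 0
\]
yields an exact sequence of graded pieces. Combined with $[\v^\geom_g[m], \v^\geom_g[m]] \subseteq W_{-4}\v^\geom_g[m]$ (which follows from $\v^\geom_g[m] = W_{-2}\v^\geom_g[m]$ and the bracket inclusion $[W_{-2}, W_{-2}] \subseteq W_{-4}$), this identifies $\Gr^W_{-2}\v^\geom_g[m]$ with $H_1(\v^\geom_g[m])$ and shows that the graded pronilpotent Lie algebra $\Gr^W_\bullet \v^\geom_g[m]$ has abelianization concentrated in weight $-2$.

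Finally, the vanishing in odd weights follows because $\Gr^W_\bullet \v^\geom_g[m]$ is topologically generated by its abelianization, which is the classical fact that a finite-dimensional nilpotent Lie algebra is generated by any lift of its abelianization, applied to each finite-dimensional nilpotent quotient of $\Gr^W_\bullet \v^\geom_g[m]$. The resulting surjection from the free Lie algebra $\L(H_1(\v^\geom_g[m]))$, with generators placed in weight $-2$, sends the bracket-length $k$ piece into weight $-2k$. Hence $\Gr^W_\bullet \v^\geom_g[m]$ is concentrated in even negative weights, giving $\Gr^W_{-2l-1}\v^\geom_g[m] = 0$ for all $l\geq 0$. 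No serious obstacle is anticipated: the whole argument is formal once Proposition \ref{pure weight -2} and the MHS from Theorem \ref{hs on relative completion} are in hand, the only minor care being to pass to each finite-dimensional nilpotent quotient when invoking generation by the abelianization.
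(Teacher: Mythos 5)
Your proof is correct and follows essentially the same route as the paper: both arguments combine the purity of $H_1(\v^\geom_g[m])$ in weight $-2$ (Proposition \ref{pure weight -2}), the exactness of $\Gr^W_\bullet$, the generation of the associated graded Lie algebra by its abelianization (which the paper cites from Hain rather than deriving from the nilpotent-quotient argument), and the observation that $[\v^\geom_g[m],\v^\geom_g[m]] \subseteq W_{-4}$ to isolate $\Gr^W_{-2}$. The only difference is cosmetic ordering — you establish $\v^\geom_g[m]=W_{-2}\v^\geom_g[m]$ first via strictness, while the paper deduces it after the odd-weight vanishing.
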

\begin{proof} The exactness of the functor $\Gr^W_\bullet$ implies that homology commutes with $\Gr^W_\bullet$. 
Therefore,  we have canonical $\Sp(H)$-module isomorphisms
$$
H_1(\Gr^W_\bullet\v^\geom_g[m]) = \Gr^W_\bullet H_1(\v^\geom_g[m]) = H_1(\v^\geom_g[m])
$$ 
by Proposition \ref{pure weight -2}. Since $\Gr^W_\bullet\v^\geom_g[m]$ is a graded Lie algebra with negative weights, by \cite[Prop.~5.6 \& Cor.~5.7]{hain_infini} there is a graded Lie algebra surjection from the free Lie algebra generated by $H_1(\Gr^W_\bullet\v^\geom_g[m])$:
$$
\L(H_1(\Gr^W_\bullet\v^\geom_g[m]))=\L(H_1(\v^\geom_g[m]))\to \Gr^W_\bullet\v^\geom_g[m].
$$
Therefore,  $\Gr^W_\bullet\v^\geom_g[m]$ is generated by $H_1(\v^\geom_g[m])$, which is pure of weight $-2$, and so  we have $\Gr^W_{-2l-1}\v^\geom_g[m]=0$ for $l\geq 0$. In particular, it follows that $\v^\geom_g[m] = W_{-2}\v^\geom_g[m]$. 
Now, consider the exact sequence
$$
0\to \Gr^W_{-2}\v^\geom_g[m]'\to \Gr^W_{-2}\v^\geom_g[m]\to \Gr^W_{-2}H_1(\v^\geom_g[m])\to 0,
$$
where $\v^\geom_g[m]'=[\v^\geom_g[m], \v^\geom_g[m]]$. Since $\Lambda^2\v^\geom_g[m] \to \v^\geom_g[m]'$ is a surjection of MHSs, we have $\v^\geom_g[m]' = W_{-4}\v^\geom_g[m]'$. Thus we have $\Gr^W_{-2}\v^\geom_g[m]'=0$, and hence 
$\Gr^W_{-2}\v^\geom_g[m]= \Gr^W_{-2}H_1(\v^\geom_g[m])=H_1(\v^\geom_g[m])$. 

\end{proof}
When $m=1$, Tanaka's computations on the cohomology of $\Delta_g$ with twisted coefficients give us the following result. 
\begin{proposition}\label{tanaka's computation}
If $g \geq 2$, then $\Hom^\cts_{\Sp(H)}(H_1(\v^\geom_g), V_{\alpha}) =0$ for $\alpha = [0]$, $[1]$, or $[1^2]$. 
\end{proposition}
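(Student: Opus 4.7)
The plan is to convert the $\Sp(H)$-equivariant Hom statement into a vanishing assertion about the group cohomology of $\D_g$ with twisted coefficients, which is exactly the setting of Tanaka's computations. Since $\mathfrak{Y}_g$ is contractible and $\cH_g = \D_g\backslash \mathfrak{Y}_g$ as an orbifold, the orbifold fundamental group of $\cH_g$ is $\D_g$, so $\v^\geom_g$ is the Lie algebra of the prounipotent radical of the relative completion of $\D_g$ with respect to $\rho^\hyp\colon \D_g\to \Sp(H)$.

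With these identifications, the $l=1$ case of Theorem \ref{rel compl cohom iso}, applied with $\G = \D_g$, $R = \Sp(H)$, and $V = V_\alpha$, yields a natural isomorphism
$$
\Hom^\cts_{\Sp(H)}(H_1(\v^\geom_g), V_\alpha) \cong H^1(\D_g, V_\alpha).
$$
Thus the proposition is equivalent to the vanishing of $H^1(\D_g, V_\alpha)$ for $\alpha = [0], [1], [1^2]$. The case $V_{[0]} = \Q$ reduces to $H^1(\D_g, \Q) = \Hom(\D_g, \Q) = 0$, which follows from the finiteness of $\D_g^\ab$ for $g\geq 2$. The cases $V_{[1]} = H$ and $V_{[1^2]} = \Lambda^2_0 H$ are precisely Tanaka's vanishing results for the twisted first cohomology of the hyperelliptic mapping class group with coefficients in these low-weight symplectic representations.

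The only bookkeeping to carry out is to confirm that the $\D_g$-module structures on $V_\alpha$ used by Tanaka agree with the ones used here, i.e.\ the actions that factor through $\rho^\hyp$; this is automatic, since in both settings $V_\alpha$ is being treated as an $\Sp(H)$-module and $\D_g$ acts through its symplectic image. No further input from the Hodge theory of relative completion or from the weight filtration on $\v^\geom_g$ is required for this particular proposition; the main challenge is simply citing Tanaka's computations with the correct translation of notation between his conventions and the partition indexing used here.
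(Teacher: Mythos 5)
Your proposal is correct and follows essentially the same route as the paper: both reduce the statement via the relative-completion cohomology isomorphism to the vanishing of $H^1(\Delta_g, V_\alpha)$, handle $\alpha=[0]$ by the finiteness of $H_1(\Delta_g,\Z)$ (the paper cites Birman--Hilden for this), and quote Tanaka's computations for $H$ and $\Lambda^2_0H$.
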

\begin{proof}
By Theorem \ref{rel comp iso for sp}, there is an isomorphism $\Hom^\cts_{\Sp(H)}(H_1(\v^\geom_g), V_{\alpha})\cong H^1(\Delta_g, V_{\alpha})$. Since $H_1(\Delta_g, \Z)$ is torsion by \cite[Thm.~8]{birman-hilden}, we have $H^1(\Delta_g,\Q)=0$. On the other hand, it follows from Tanaka's computations \cite[Thm.~1.1 \& Thm.~1.3]{tanaka} that $H^1(\Delta_g, H)=0$ and $H^1(\Delta_g,  \Lambda^2_0H)=0$. Therefore, our claim follows. 
\end{proof}
\subsection{Applications to the universal curve $\cC_{g,n}[m]\to \M_{g,n}[m]$} We review the relative completion of the mapping class groups here. For a detailed introduction, see \cite{hain_compl} or \cite{hain_infini}. 
The natural monodromy representation $\pi_1(\M_{g,n}[m])\to \Sp(H)$ can be identified with the homomorphism $\rho[m]: \G_{g,n}[m]\to \Sp(H)$. 
Denote the relative completion of $\G_{g,n}[m]$ with respect to $\rho[m]$ by $\cG^\geom_{g,n}[m]$ and its prounipotent radical by $\U^\geom_{g,n}[m]$. Similarly, denote the relative completion of $\pi_1(\cC_{g,n}[m])$ with respect to the monodromy representation to  $\Sp(H)$ by $\cG^\geom_{\cC_{g,n}}[m]$ and its prounipotent radical by $\U^\geom_{\cC_{g,n}}$[m]. In the case of the mapping class groups, the relative completion does not depend on the level $m$ when $g\geq 3$ (see \cite[Prop.~3.3]{hain_infini}). Hence we drop the level structure $m$ from the completions. Let $\u^\geom_{g,n}$ and $\u^\geom_{\cC_{g,n}}$ be the Lie algebras of $\U^\geom_{g,n}$ and $\U^\geom_{\cC_{g,n}}$, respectively. \\
\indent The polarization $\Lambda^2H\to\Q(1)$ is viewed as an element $\check{\theta}$ of $\Lambda^2H(-1)$.   Denote  the quotient $\Lambda^3H(-1)/H\wedge \check{\theta}$ by $\Lambda^3_0H$. The $\Sp(H)$-module $\Lambda^3_0H$ is isomorphic to the irreducible representation $V_{[1^3]}$ of $\Sp(H)$. 
 In \cite{hain_ration}, the weight filtrations on $\u^\geom_{g,n}$ was constructed by weighted completion. The Lie algebra $\u^\geom_{g,n}$ also admits a natural weight filtration from the Hodge theory by \cite[Thm.~13.1]{hain_hodge_rel}. Since  $H_1(\u^\geom_{g,n})$ is pure of weight $-1$ in both cases, these weight filtrations agree with the lower central series of $\u^\geom_{g,n}$. We have the following description for $H_1(\u^\geom_{g,n})$.
 \begin{theorem}[{\cite[Thm.~9.11]{hain_ration}}]\label{ugn ab}
 If $g\geq 3$ and and $n\geq 0$, then  there are $\Sp(H)$-equivariant isomorphisms
$$
H_1(\u^\geom_{g,n})\cong \Gr^W_{-1}\u^\geom_{g,n}\cong \Lambda^3_0H\oplus \bigoplus_{j=1}^nH_j.
$$
 \end{theorem}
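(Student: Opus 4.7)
The plan is to compute $H_1(\u^\geom_{g,n})$ directly via the cohomological description of relative completion, and then to deduce the identification with $\Gr^W_{-1}\u^\geom_{g,n}$ from a purity argument analogous to that of Proposition~\ref{pure weight -2}.

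The first step is to invoke Theorem~\ref{rel comp iso for sp}, which yields the $\Sp(H)$-equivariant isomorphism
$$H^1(\u^\geom_{g,n}) \cong \bigoplus_\alpha H^1(\G_{g,n}, V_\alpha)\otimes V_\alpha^\ast,$$
summed over partitions $\alpha$ with at most $g$ parts. Since each irreducible $\Sp(H)$-representation is self-dual, dualizing reduces the theorem to the claim that $H^1(\G_{g,n}, V_\alpha)$ vanishes for every $\alpha$ except $\alpha=[1^3]$, where it is one-dimensional, and $\alpha=[1]$, where it is $n$-dimensional. Granted this, purity of $H_1(\u^\geom_{g,n})$ in weight $-1$ follows by extending each $\V_\alpha$ to the Deligne--Mumford compactification $\Mbar_{g,n}[m]$ (for $m\geq 3$) and running a Gysin-sequence weight computation as in Proposition~\ref{pure weight -2}; then $H_1(\u^\geom_{g,n}) = \Gr^W_{-1}\u^\geom_{g,n}$ is immediate.

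For the base case $n=0$, I would cite the classical Johnson--Morita computation: for $g\geq 3$, $H^1(\G_g, V_\alpha)$ vanishes except at $\alpha=[1^3]$, in which case it is one-dimensional, with generator provided by the Morita lift of the Johnson homomorphism on the Torelli group. For the inductive step $n\rightsquigarrow n+1$, I would apply the Lyndon--Hochschild--Serre spectral sequence associated to the Birman exact sequence (\ref{full birman seq}),
$$E_2^{p,q} = H^p(\G_{g,n}, H^q(\pi_1(S_{g,n}), V_\alpha))\Longrightarrow H^{p+q}(\G_{g,n+1}, V_\alpha).$$
Since $\pi_1(S_{g,n})$ acts trivially on $V_\alpha$, the invariant term $H^1(\pi_1(S_{g,n}), V_\alpha)^{\G_{g,n}}$ decomposes via Schur--Weyl as $(H\otimes V_\alpha)^{\Sp(H)}$ plus trivial summands from loops around punctures; the first piece is one-dimensional precisely when $\alpha=[1]$ and vanishes otherwise. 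Combined with the inductive hypothesis, this contributes exactly one new copy of $V_{[1]}\cong H$ at each new puncture, yielding the expected $H^1(\G_{g,n+1}, V_{[1]})\cong \Q^{n+1}$.

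The main obstacle is controlling the transgression $d_2 : E_2^{0,1}\to E_2^{2,0}$: one must show it kills the spurious invariants coming from the puncture loops (for $\alpha=0$) while vanishing on the new $(H\otimes H)^{\Sp(H)}$ class (for $\alpha=[1]$). The former is forced by Harer's vanishing $H^1(\G_{g,n+1}, \Q)=0$ for $g\geq 3$, which pins down the image of $d_2$ inside the Mumford--Morita--Miller classes in $H^2(\G_{g,n}, \Q)$. For the latter, the tautological sections $s_j$ of the universal curve $\cC_{g,n}\to \M_{g,n}$ supply explicit geometric lifts producing cocycles in $H^1(\G_{g,n+1}, H)$, and Hain's naturality of the MHS on relative completion rules out Hodge-theoretic cancellations. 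Together these inputs close the induction and produce the stated decomposition.
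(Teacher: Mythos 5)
First, note that the paper gives no proof of this statement at all: it is quoted verbatim from Hain \cite[Thm.~9.11]{hain_ration}. So your proposal is being measured against Hain's argument rather than anything in this paper. The first half of your plan --- reduce via Theorem~\ref{rel comp iso for sp} to computing $H^1(\G_{g,n},V_\alpha)$ for all $\alpha$, settle $n=0$ by Johnson's computation of $H_1(T_g)\otimes\Q$ together with the Morita extension of the Johnson homomorphism and Raghunathan/Powell vanishing, and then induct on $n$ through the Lyndon--Hochschild--Serre sequence of the Birman exact sequence --- is essentially Hain's strategy and is sound in outline. Your handling of the two $d_2$ issues is also basically right: the $\alpha=[0]$ invariants are disposed of simply by quoting $H^1(\G_{g,n+1},\Q)=0$, and the surviving $(H\otimes H)^{\Sp}$ class for $\alpha=[1]$ is witnessed by the restriction of the Abel--Jacobi class $\kappa_{n+1}$ of $(2g-2)x_{n+1}-\omega$ to the point-pushing subgroup. (One small point to be careful about: the $\G_{g,n}$-action on $H^1(S_{g,n},\Q)$ does not factor through $\Sp(H_\Z)$, only its action on the associated graded of the extension $0\to H^1(S,\Q)\to H^1(S_{g,n},\Q)\to\Q(-1)^{n-1}\to 0$ does; this is harmless for $\alpha=[1]$ since the invariant lives in the sub, but it should be said.)

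The genuine gap is the purity step, i.e.\ the identification $H_1(\u^\geom_{g,n})\cong\Gr^W_{-1}\u^\geom_{g,n}$. Your proposal to ``extend each $\V_\alpha$ to $\Mbar_{g,n}[m]$ and run a Gysin-sequence weight computation as in Proposition~\ref{pure weight -2}'' fails for two reasons. First, $\V_\alpha$ does not extend to $\Mbar_{g,n}[m]$: the monodromy around $\delta_0$ is a transvection (Dehn twist about a nonseparating curve), which acts nontrivially on $H$; the local system only extends over the compact-type locus $\M^c_{g,n}[m]$, which is exactly why Proposition~\ref{pure weight -2} works with $\cH^c_g[m]$. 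Second, and more seriously, the Proposition~\ref{pure weight -2} argument proves purity in the \emph{top} of Deligne's allowed weight range: it shows $H^1$ of the partial compactification vanishes, so that $H^1(X,\V_\alpha)$ injects into $H^0(D^{\mathrm{sm}},\V_\alpha(-1))$ and hence is pure of weight $2-|\alpha|$, which translates into $H_1$ of the prounipotent radical being pure of weight $-2$. What you need here is purity in the \emph{bottom} weight $1-|\alpha|$ (equivalently $H_1(\u^\geom_{g,n})$ pure of weight $-1$), which requires the opposite conclusion: that the residue map to $H^0(D^{\mathrm{sm}},\V_\alpha(-1))$ vanishes, i.e.\ that restriction from the compactification is surjective. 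If your mimicked argument succeeded it would prove $H_1(\u^\geom_{g,n})$ is pure of weight $-2$, contradicting the theorem. The repair is different in character: since the weight filtration on $H_1(\u^\geom_{g,n})$ is by $\Sp(H)$-submodules, each of the finitely many isotypic summands is automatically pure of \emph{some} weight, and one must pin down the Tate twist on each generator directly --- the classes $\kappa_j$ and the Johnson--Morita class are normal-function/Abel--Jacobi classes that extend over $\M^c_{g,n}[m]$, which is how Hain identifies their weight as $1-|\alpha|$. As written, your purity step is not a gap that closes by routine effort; it is aimed in the wrong direction.
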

 Applying relative completion to the homotopy exact sequence
 $$
 1\to \pi_1(C, \bar x_0)\to \pi_1(\cC_{g,n}[m], \etabar_{n+1})\to \pi_1(\M_{g,n}[m], \etabar_n)\to 1
 $$ produces the exact sequence of pronilpotent Lie algebras
 $$
 0\to\p\to \u^\geom_{\cC_{g,n}}\to \u^\geom_{g,n}\to 0. 
 $$
 From this sequence, it follows that  there are $\Sp(H)$-equivariant isomorphisms
$$
H_1(\u^\geom_{\cC_{g,n}})\cong \Gr^W_{-1}\u^\geom_{\cC_{g,n}}\cong \Lambda^3_0H\oplus \bigoplus_{j=0}^nH_j.
$$
\section{Weighted completion of $\pi^\alg_1(\cH_{g,n/k})$} The weighted completion $\cG$ of a profinite group $\G$ is the key tool used in \cite{wat_sec}. Weighted completion is a variant of relative completion and introduced by Hain and Matsumoto in \cite{HaMa_wcomp}.  It can be used to define weight filtrations with strong exactness properties on $\cG$-modules and it is also computable, since it is controlled by cohomology. The proof of Theorem \ref{sections over C} is reduced to the case considered in \cite[Thm.~1]{wat_sec}, and for this reason, we will briefly review the definition of weighted completion and define the weighted completion of $\pi_1^\alg(\cH_{g,n/k})$ in this section. \\
\indent  Suppose that the following data: $\G$ is a profinite group, $R$ is a reductive group over $\Ql$, $\omega: \Gm\to R$ is a central cocharacter, and $\rho: \G\to R(\Ql)$ is a continuous homomorphism with Zariski-dense image. An extension $G$ 
$$
1 \to U\to G\to R\to 1
$$
 of $R$ by a prounipotent group $U$ is said to be a negatively weighted extension if $H_1(U)$ admits only negative weights as a $\Gm$-module via the central cocharacter $\omega$. The weighted completion of $\G$ with respect to $\rho$ and $\omega$ consists of a proalgebraic group $\cG$ over $\Ql$ that is a negatively weighted extension of $R$ by a prounipotent group $\U$ and a Zariski-dense homomorphism $\tilde\rho: \G\to \cG(\Ql)$ that satisfies a universal property: If there is a negatively weighted extension $G$ of $R$ and there is a homomorphism $\rho_G: \G\to G$ lifting $\rho$, then there exists a unique homomorphism $\psi_G: G\to \cG$ such that $\tilde\rho =\psi_G\circ \rho_G$. By a generalization of Levi's theorem, the extension
 $$
 1\to \U\to \cG\to R\to 1
 $$
splits and any two splittings are conjugate by an element of $\U(\Ql)$. Denote the Lie algebras of $\cG$, $\U$, and $R$ by $\g$, $\u$, and $\r$, respectively. By fixing a splitting $s: R\to \cG$, the adjoint action of $\cG$ on $\g$ produces a natural weight filtration $\mathcal{W}_\bullet \g$ satisfying that $\g =\mathcal{W}_0\g$, $\u = \mathcal{W}_{-1}\u = \mathcal{W}_{-1}\g$, and $\Gr^\mathcal{W}_0\g =\r$.  The adjoint action of $\cG$ on $\Gr^\mathcal{W}_r\g$ factors through $R$.  Furthermore, the functor $\Gr^\mathcal{W}_\bullet$ is exact in the category of finite dimensional (and hence pro-and ind-)$\cG$-modules. \\
\indent Suppose that $k$ is a field of characteristic zero such that the image of the $\ell$-adic cyclotomic character $\chi_\ell: G_k\to \Z^\times$ of the absolute Galois group of $k$ is infinite. Let $R =\GSp(H_\Ql)$.  Define a central cocharacter $\omega:\Gm\to R$ by sending $\lambda\mapsto \lambda^{-1}\id$. Consider  the monodromy representation $\rho^\arith_\ell[m]: \pi_1^\alg(\cH_{g,n/k}[m], \etabar_n)\to \GSp(H_\Ql)$. The Zariski-density of the monodromy $\rho^\hyp_\ell[m]:\pi_1^\alg(\cH_{g,n/\bar k}[m], \etabar_n)\to \Sp(H_\Ql)$ implies that $\rho^\arith_\ell[m]$ has Zariski-dense image. 
Denote the weighted completion of $\pi_1^\alg(\cH_{g,n/k}, \etabar_n)$ with respect to $\rho^\arith_\ell[m]$ and $\omega$ by $\cD_{g,n}[m]$.  It is a proalgebraic group over $\Ql$ and its prounipotent radical is denoted by $\cV_{g,n}[m]$. Denote the Lie algebras of $\cD_{g,n}[m]$ and $\cV_{g,n}[m]$ by $\d_{g,n}[m]$ and $\v_{g,n}[m]$. 
\section{The classes of sections of $\pi: \cC_{\cH_{g,n}}[m]\to \cH_{g,n}[m]$}
Let $k\subset\C$ be a field. For a geometrically connected stack $\X_{/k}$ over $k$, let $f:C\to \X_{/k}$ be a curve of genus $g$ with $g\geq 2$.
 Denote the canonical divisor class of $f$ by $\omega_f$. For each section $x$ of $f$, the cycle $(2g-2)x - \omega_f$ is a relative cycle of codimension zero and homologous to zero on each fiber of $f$. Hence, the topological Abel-Jacobi map (see \cite[\S 4]{hain_matsu}) associates to the cycle a characteristic class $\kappa_x$ in $H^1(\X, \H_\Z)$. \\
 \indent  For $j=1,\ldots, n$, denote by $\kappa_j$ the characteristic class in $H^1(\M_{g,n}, \H_\Z)$ induced by the $j$th tautological section $x_j$ of $\cC_{g,n/k}\to \M_{g,n/k}$. Recall that the sections $s_j$ of the universal hyperelliptic curve $\pi:\cC_{\cH_{g,n/k}}[m]\to \cH_{g,n/k}[m]$ are the pullbacks of the sections $x_j$. The class in $H^1(\cH_{g,n}[m], \H_\Z)$ corresponding to the section $s_j$ of $\pi$ is denoted by $\kappa^\hyp_{j}$. 
\subsection{The class $\kappa_j$}Each tautological class $\kappa_j$ can be identified with an $\Sp(H)$-equivariant homomorphism $H_1(\u^\geom_{g,n})\to H$. By Theorem \ref{ugn ab}, we have an $\Sp(H)$-equivariant isomorphism
$$
H_1(\u^\geom_{g,n})\cong \Lambda^3_0H\oplus \bigoplus_{j=1}^nH_j.
$$
Denote the projection 
$$
\Lambda^3_0H\oplus \bigoplus_{j=1}^nH_j\to H_j
$$
onto the $j$th copy of $H$ by $p_j$. Let $\H_\Q = \H_\Z\otimes \Q$. 
\begin{theorem}[{\cite[Prop.~12.1, Cor.~12.6]{hain_ration}}]
When $g\geq 3$, $n\geq 0$, and $m\geq1$,  there is an isomorphism 
$$
H^1(\M_{g,n}[m], \H_\Q)\cong \Q\kappa_1\oplus \Q\kappa_2\oplus \cdots \oplus \Q\kappa_n
$$ and the class $\kappa_j/(2g-2)$ in $H^1(\M_{g,n}[m], \H_\Q)$ corresponds to 
the projection $p_j$ under the isomorphism
$$
H^1(\M_{g,n}[m], \H_\Q)\cong \Hom^\cts_{\Sp(H)}(H_1(\u^\geom_{g,n}), H).
$$
\end{theorem}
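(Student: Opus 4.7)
The overall plan is to reduce the computation to a purely representation-theoretic question about $\Sp(H)$-equivariant maps out of the abelianization $H_1(\u^\geom_{g,n})$, and then to identify the distinguished classes $\kappa_j$ explicitly under that identification. The starting point is the isomorphism supplied by Corollary \ref{rel comp mhs iso}, applied to the PVHS $\H_\Q$ over $\M_{g,n}[m]$ whose monodromy is the defining representation of $\Sp(H)$. This yields
$$
H^1(\M_{g,n}[m], \H_\Q) \cong \Hom^\cts_{\Sp(H)}(H_1(\u^\geom_{g,n}), H),
$$
since the $l=1$ case is an honest isomorphism (no $H^0$ boundary term contributes because $H$ has no $\Sp(H)$-invariants).

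Next I would invoke Theorem \ref{ugn ab} to write $H_1(\u^\geom_{g,n}) \cong \Lambda^3_0 H \oplus \bigoplus_{j=1}^n H_j$ as $\Sp(H)$-modules. Since $H \cong V_{[1]}$, $\Lambda^3_0 H \cong V_{[1^3]}$, and the $H_j$ are each copies of $V_{[1]}$, Schur's lemma gives $\Hom_{\Sp(H)}(\Lambda^3_0 H, H) = 0$ and $\Hom_{\Sp(H)}(H_j, H) \cong \Q$, generated by the canonical identification $H_j \cong H$. Consequently
$$
\Hom^\cts_{\Sp(H)}(H_1(\u^\geom_{g,n}), H) \cong \bigoplus_{j=1}^n \Q \cdot p_j,
$$
which establishes the asserted dimension $n$ together with the distinguished basis given by the projections $p_j$.

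It remains to match each tautological class $\kappa_j$ to a specific multiple of $p_j$. For this I would unwind the definition of the Abel--Jacobi class: the section $x_j$ of $\cC_{g,n/k}\to \M_{g,n/k}$ induces a splitting (at the abelianization level) of the exact sequence
$$
0 \to \p \to \u^\geom_{\cC_{g,n}} \to \u^\geom_{g,n} \to 0
$$
coming from the homotopy exact sequence of the universal curve, and this splitting determines the $H$-valued $1$-cocycle on $\pi_1(\M_{g,n}[m])$ whose class is $\kappa_j$. Comparing the $x_j$-splitting with any other (e.g.\ the splitting coming from the canonical class $\omega_f$) one sees that the difference lands in the $H_j$ summand, and the normalization $(2g-2)x_j - \omega_f$ in the definition of $\kappa_j$ produces exactly the factor $(2g-2)$. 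This shows $\kappa_j/(2g-2) \leftrightarrow p_j$ under the isomorphism above.

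The main obstacle is the last step: verifying the precise proportionality constant $(2g-2)$ between $\kappa_j$ and $p_j$. The vector-space dimension count is automatic once Theorem \ref{ugn ab} is in hand, but pinning down the scaling requires tracking the topological Abel--Jacobi map through the relative-completion formalism and checking that the contribution of $\omega_f$ projects trivially to each $H_j$ summand (so that only the $(2g-2)x_j$ term survives). This is where the geometric content of Hain's theorem really lies; the rest is Schur's lemma plus the cohomological comparison theorem for relative completion.
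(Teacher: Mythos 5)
This theorem is not proved in the paper at all: it is quoted verbatim from Hain \cite{hain_ration} (Prop.~12.1 and Cor.~12.6), so the only meaningful comparison is with Hain's argument, which your outline does in fact follow. The first half of your proposal is complete and correct: Corollary \ref{rel comp mhs iso} gives $H^1(\M_{g,n}[m],\H_\Q)\cong\Hom^\cts_{\Sp(H)}(H_1(\u^\geom_{g,n}),H)$, Theorem \ref{ugn ab} gives $H_1(\u^\geom_{g,n})\cong\Lambda^3_0H\oplus\bigoplus_j H_j$, and Schur's lemma (using $g\geq 3$ so that $V_{[1^3]}\not\cong V_{[1]}$, and $H\cong H^\ast$ via the polarization) kills the $\Lambda^3_0H$ summand and leaves $\bigoplus_j\Q\, p_j$. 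Everything here is finite-dimensional, so the continuity condition is vacuous.

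The gap is in the last step, and it is a genuine one rather than a bookkeeping issue. First, the phrase ``the contribution of $\omega_f$ projects trivially to each $H_j$ summand'' does not parse as stated: neither $(2g-2)x_j$ nor $\omega_f$ is a degree-zero relative cycle, so neither has an Abel--Jacobi class on its own; only the combination $(2g-2)x_j-\omega_f$ does, and you cannot split its class into two contributions without first extending the Abel--Jacobi formalism to degree-$d$ cycles (e.g.\ rationally, via $\frac{1}{d}\cdot$). Second, the assertion that the splitting of $0\to\p\to\u^\geom_{\cC_{g,n}}\to\u^\geom_{g,n}\to 0$ induced by $x_j$ has $H_0$-component \emph{exactly} the projection $p_j$ (coefficient $1$, landing in the $j$-th summand and not some other $\Sp(H)$-equivariant combination $\sum_i a_i p_i$) is precisely the content of \cite[Prop.~10.8]{hain_ration} --- the same input this paper invokes in Proposition \ref{geo sect induce lie sect} --- and it requires an actual computation, not just equivariance. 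A clean way to close the gap: by naturality under the map $\M_{g,n}[m]\to\M_{g,1}$ forgetting all points but $x_j$, the class $\kappa_j$ is pulled back from $H^1(\M_{g,1},\H_\Q)\cong\Q\,p_1$, so $\kappa_j=c\cdot p_j$ for a constant $c$ independent of $j$ and $n$; then restrict to a single fiber $C$ of $\M_{g,1}\to\M_g$, where the normal function of $(2g-2)x-\omega_C$ becomes the map $C\to\Jac C$, $x\mapsto (2g-2)x-\omega_C$, which induces multiplication by $2g-2$ on $H_1$, giving $c=2g-2$. Without some such computation the proportionality constant --- which is the whole point of the second assertion of the theorem --- is not established.
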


\subsection{The class $\kappa^\hyp_j$}  A similar result also holds for the universal hyperelliptic curves. By Theorem \ref{rel compl cohom iso}, we have a natural  isomorphism
$$
H^1(\cH_{g,n}[m], \H_\Q)\cong \Hom^\cts_{\Sp(H)}(H_1(\v^\geom_{g,n}[m]), H).
$$
The following result together with Proposition \ref{tanaka's computation} allows us to determine $H^1(\cH_{g,n}, \H_\Q)$.
\begin{proposition}\label{ab of unip radical}
If $g\geq 2$, $n\geq 0$, and $m\geq 1$, then there is an  $\Sp(H)$-equivariant isomorphism
$$
H_1(\v^\geom_{g,n}[m])\cong \bigoplus_{j=1}^nH_j \oplus H_1(\v^\geom_g[m]).
$$
\end{proposition}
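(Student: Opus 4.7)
The plan is to apply the Hochschild--Serre five term exact sequence to the short exact sequence
\[
0\to\p_{g,n}\to\v^\geom_{g,n}[m]\to\v^\geom_g[m]\to 0
\]
of pronilpotent Lie algebras provided by Proposition~\ref{commu diagram for lie alg}. This produces an $\Sp(H)$-equivariant sequence
\[
H_2(\v^\geom_g[m])\xrightarrow{d} H_1(\p_{g,n})_{\v^\geom_g[m]}\to H_1(\v^\geom_{g,n}[m])\to H_1(\v^\geom_g[m])\to 0,
\]
which is moreover a sequence of MHSs since the original extension is an extension of Lie algebras in MHS coming from relative completion.

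I would then dispose of the two leftmost terms by weight considerations. First, the adjoint action of $\v^\geom_g[m]$ on $H_1(\p_{g,n})\cong\bigoplus_{j=1}^n H_j$ (Proposition~\ref{ab of pgn}) is a morphism of MHS. Since $\v^\geom_g[m]=W_{-2}\v^\geom_g[m]$ by Corollary~\ref{abelian weight -2} and $H_1(\p_{g,n})$ is pure of weight $-1$, any nontrivial value of the action would sit in $W_{-3}$ of a pure weight $-1$ module and so must vanish; consequently the coinvariants equal all of $\bigoplus_{j=1}^n H_j$. Second, again by Corollary~\ref{abelian weight -2}, $\Lambda^2\v^\geom_g[m]$ has weights $\leq -4$, so $H_2(\v^\geom_g[m])$ (a subquotient thereof in the Chevalley--Eilenberg complex) also has weights $\leq -4$. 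The transgression $d$, being a morphism of MHS into a pure weight $-1$ module, must therefore be zero.

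This leaves a short exact sequence of $\Sp(H)$-modules
\[
0\to\bigoplus_{j=1}^n H_j\to H_1(\v^\geom_{g,n}[m])\to H_1(\v^\geom_g[m])\to 0,
\]
which splits $\Sp(H)$-equivariantly because $\Sp(H)$ is reductive and every finite dimensional $\Sp(H)$-module is completely reducible; the splitting then passes through the inverse limit defining the pro-objects to yield the desired isomorphism.

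The main technical point to verify is that the Hochschild--Serre transgression is indeed a morphism of MHSs in the pronilpotent setting with the weight filtrations coming from Theorem~\ref{hs on relative completion}; granted that, the weight arguments are essentially forced. A secondary, minor point is ensuring that the weights assigned to $\p_{g,n}$ by its inclusion into $\v^\geom_{g,n}[m]$ agree with its intrinsic MHS weight filtration, but this is exactly the agreement of weight filtrations asserted after Proposition~\ref{commu diagram for lie alg}.
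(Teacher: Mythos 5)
Your argument is correct, but it takes a genuinely different route from the paper's. The paper abelianizes the second commutative diagram of Proposition~\ref{commu diagram for lie alg}: left-exactness is reduced to injectivity of $H\to H_1(\v^{\geom(j)}_{g,1}[m])$, which is deduced from the center-freeness of $\p$ (so that the adjoint map $\p\to W_{-1}\Der\p$ is injective, hence injective on $\Gr^W_{-1}$), and then bootstrapped to general $n$ through the comparison of $\widehat\v^\geom_{g,n}[m]$ with $\bigoplus_j\v^{\geom(j)}_{g,1}[m]$. You instead run the five-term homology sequence of the single extension $0\to\p_{g,n}\to\v^\geom_{g,n}[m]\to\v^\geom_g[m]\to 0$ and kill both obstruction terms by weight purity: $\v^\geom_g[m]=W_{-2}\v^\geom_g[m]$ by Corollary~\ref{abelian weight -2}, so the action on the weight $-1$ pure module $H_1(\p_{g,n})$ vanishes and the transgression out of $H_2(\v^\geom_g[m])\subseteq W_{-4}$ vanishes. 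The technical point you flag is fine: the Chevalley--Eilenberg and Hochschild--Serre differentials are built from the bracket, which is a morphism of MHS by Theorem~\ref{hs on relative completion}, and since each term is an inverse limit of finite-dimensional MHSs one can equally apply the exact functor $\Gr^W_\bullet$ (homology commutes with it, as the paper notes in the proof of Corollary~\ref{abelian weight -2}) and argue degree by degree; the secondary point about the two weight filtrations on $\p_{g,n}$ agreeing is exactly the naturality statement recorded after Proposition~\ref{commu diagram for lie alg}. What your approach buys is that it bypasses the center-freeness of $\p_{g,n}$ (the input from \cite{ntu}) and the auxiliary spaces $\cC^n_{\cH_g}[m]$ and $\prod_j\cH_{g,1}[m]$ entirely, replacing them with the purity statement of Proposition~\ref{pure weight -2}; the paper's argument is more elementary on the homological-algebra side but leans on the geometry of the diagram, and its center-freeness mechanism is the one reused later in Lemma~\ref{weight fil on ab splits}.
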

\begin{proof}
Taking the abelianization of the second diagram in Proposition \ref{commu diagram for lie alg}, we have
$$
\xymatrix@R=1em@C=2em{
\bigoplus_{j=1}^nH_j\ar[r]  \ar@{=}[d]      & H_1(\widehat\v^\geom_{g,n}[m])\ar[r]\ar[d] & H_1(\v^\geom_g[m])\ar[r]\ar[d] ^{\mathrm{diag}} &0\\
\bigoplus_{j=1}^nH_j\ar[r]                    &\bigoplus_{j=1}^nH_1(\v^{\geom(j)}_{g,1}[m])\ar[r]    & (H_1(\v^\geom_{g}[m]))^n\ar[r]&0.
}
$$
Since the adjoint map $\p\to \v^\geom_{g,1}[m]\to W_{-1}\Der \p$ is injective and a morphism of MHS, it follows that the 
$H=\Gr^W_{-1}\p\to \Gr^W_{-1}\v^\geom_{g,n}[m]\to \Gr^W_{-1}\Der\p$ is injective. Note that this adjoint action on $\Gr^W_{-1}$ agrees with the composition $H\to H_1(\v^\geom_{g,1}[m])\to H_1(W_{-1}\Der\p)\to \Gr^W_{-1}\Der \p $. Therefore, the map $H\to H_1(\v^\geom_{g,1}[m])$ is injective. Hence the bottom row of the diagram above is exact at left and so is the top row. Since $H_1(\widehat\v^\geom_{g,n}[m])$ is a direct product of finite dimensional irreducible $\Sp(H)$-modules, we have a desired $\Sp(H)$-splitting. 
\end{proof}
Therefore, for our case, we have the following result. 
\begin{proposition}
If $g\geq 2$ and  $n\geq 0$,  then there is an isomorphism
$$
H^1(\cH_{g,n}, \H_\Q) \cong\Q\kappa^\hyp_1\oplus \Q\kappa^\hyp_2\oplus\cdots\oplus\kappa^\hyp_n.
$$
\end{proposition}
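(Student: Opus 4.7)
The plan is to combine the cohomology-to-relative-completion isomorphism with the splitting of $H_1(\v^\geom_{g,n})$ from Proposition \ref{ab of unip radical} and Tanaka's vanishing (Proposition \ref{tanaka's computation}). Starting from Theorem \ref{rel compl cohom iso} applied to the PVHS $\H_\Q$ over $\cH_{g,n}$, one has a natural isomorphism
$$
H^1(\cH_{g,n}, \H_\Q) \cong \Hom^\cts_{\Sp(H)}(H_1(\v^\geom_{g,n}), H).
$$
By Proposition \ref{ab of unip radical} with $m=1$,
$$
H_1(\v^\geom_{g,n}) \cong \bigoplus_{j=1}^n H_j \oplus H_1(\v^\geom_g),
$$
so Hom into $H$ decomposes as
$$
\bigoplus_{j=1}^n \Hom_{\Sp(H)}(H_j, H) \oplus \Hom^\cts_{\Sp(H)}(H_1(\v^\geom_g), H).
$$

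Next I would compute each summand. Since $H$ is the irreducible $\Sp(H)$-module $V_{[1]}$, Schur's lemma gives $\Hom_{\Sp(H)}(H_j, H) \cong \Q$, contributing $\Q^{\oplus n}$ in total. Proposition \ref{tanaka's computation}, applied with $\alpha = [1]$, gives $\Hom^\cts_{\Sp(H)}(H_1(\v^\geom_g), H) = 0$. Combined, this yields an isomorphism $H^1(\cH_{g,n}, \H_\Q) \cong \Q^{\oplus n}$, giving the correct dimension (and in particular vanishing when $n=0$).

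The remaining step is to identify the specific classes $\kappa^\hyp_1, \ldots, \kappa^\hyp_n$ with a basis of this space. Here I would use naturality of the topological Abel-Jacobi construction under the map $\cH_{g,n} \to \M_{g,n}$: since $s_j$ is the pullback of the tautological section $x_j$, the class $\kappa^\hyp_j$ is the pullback of $\kappa_j$ along the induced map $H^1(\M_{g,n}, \H_\Q) \to H^1(\cH_{g,n}, \H_\Q)$. Under the relative-completion identification, this pullback is dual to the natural map $H_1(\v^\geom_{g,n}) \to H_1(\u^\geom_{g,n})$ coming from relative completion applied to $\pi_1(\cH_{g,n}) \to \pi_1(\M_{g,n})$. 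By the hyperelliptic analogue of Theorem \ref{ugn ab} (as in the proof of Proposition \ref{ab of unip radical}, where the $H_j$-summands are obtained from the adjoint action of $\p$), this map sends the $j$th copy $H_j \subset H_1(\v^\geom_{g,n})$ isomorphically to the $j$th copy $H_j \subset H_1(\u^\geom_{g,n})$. Since $\kappa_j/(2g-2)$ is the projection $p_j$, the class $\kappa^\hyp_j/(2g-2)$ corresponds to the projection onto the $j$th summand $H_j$ of $H_1(\v^\geom_{g,n})$. These $n$ projections form a basis of $\bigoplus_{j=1}^n \Hom_{\Sp(H)}(H_j, H)$, yielding the claimed isomorphism.

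The main obstacle is the last identification step: one must verify compatibility between the naturality of the Abel-Jacobi class under $\cH_{g,n} \to \M_{g,n}$ and the Schur-Weyl splitting of $H_1(\v^\geom_{g,n})$ built in Proposition \ref{ab of unip radical}. Once that compatibility is established, the dimension count via Tanaka's vanishing closes the argument with essentially no further work.
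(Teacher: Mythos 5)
Your proposal is correct and follows essentially the same route as the paper: identify $H^1(\cH_{g,n},\H_\Q)$ with $\Hom^\cts_{\Sp(H)}(H_1(\v^\geom_{g,n}),H)$, split off $H_1(\v^\geom_g)$ via Proposition \ref{ab of unip radical}, kill it with Tanaka's vanishing, and identify the $\kappa^\hyp_j$ with the pulled-back projections $p_j$ via naturality under $\cH_{g,n}\to\M_{g,n}$. The one compatibility you flag as the "main obstacle" is settled in the paper by a short weight argument: the map $H_1(\v^\geom_{g,n})\to H_1(\u^\geom_{g,n})$ is a morphism of MHS and $H_1(\v^\geom_g)$ is pure of weight $-2$ while $H_1(\u^\geom_{g,n})$ is pure of weight $-1$, so the image is exactly $\bigoplus_{j=1}^n H_j$ and $p_j$ pulls back to the $j$th projection.
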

\begin{proof}
The morphism $\cH_{g,n}\to \M_{g,n}$ yields the commutative diagram
$$
\xymatrix@R=1em@C=2em{
H^1(\M_{g,n},\H_\Q)\ar[r]^-\cong\ar[d]& \Hom^\cts_{\Sp(H)}(H_1(\u^\geom_{g,n}), H)\ar[d]\\
H^1(\cH_{g,n},\H_\Q)\ar[r]^-\cong & \Hom^\cts_{\Sp(H)}(H_1(\v^\geom_{g,n}), H),
}
$$
 where the horizontal maps are isomorphisms by Theorem \ref{rel compl cohom iso}.  Since the map $H_1(\v^\geom_{g,n})\to H_1(\u^\geom_{g,n})$ induced by relative completion is a morphism of MHS and $H_1(\v^\geom_g)$ is pure of weight $-2$ by Proposition \ref{pure weight -2}, the image of $H_1(\v^\geom_{g,n})$ is $\bigoplus_{j=1}^nH_j$. Hence the homorphism $p_j$ pulls back to the projection onto the $j$th copy of $H$
$$
H_1(\v^\geom_{g,n})\cong \bigoplus_{j=1}^nH_j \oplus H_1(\v^\geom_g)\to H_j. 
$$
By Propostion \ref{tanaka's computation} and Proposition \ref{ab of unip radical}, there is an isomorphism
$$
\Hom^\cts_{\Sp(H)}(H_1(\v^\geom_{g,n}), H)\cong \Q^n,
$$ 
where the pullbacks of the projections $p_j$ form a basis. Therefore, we have an isomorphism 
$$
H^1(\cH_{g,n}, \H_\Q) \cong \bigoplus_{j=1}^n\Q\kappa^\hyp_j.
$$
\end{proof}

\subsection{The $\ell$-adic case} The $\ell$-adic and topological Abel-Jacobi maps are compatible via standard comparison theorems (see \cite[\S 4]{hain_matsu}). This means that for a section $x$ of  $f:C\to \X_{/k}$ the image of the cycle $(2g-2)x-\omega_f$ under the $\ell$-adic Abel-Jacobi map corresponds to $\kappa_x$ under the comparison isomorphism
$$
H^1_\et(\X_{/k}\otimes \bar k, \H_\Zl)\cong H^1_\et(\X_{/k}\otimes \C, \H_\Zl)\cong H^1(\X, \H_\Z)\otimes\Zl.
$$
Therefore, we denote the image of $(2g-2)x-\omega_f$ under the $\ell$-adic Abel-Jacobi map by $\kappa_x$ as well. 
\section{The proofs of main results}\label{proofs}
In \cite[\S 8]{wat_sec}, associated to the $n$th power $\cC^n_{\cH_{g/k}}$ of $\cC_{\cH_{g/k}}$, a 4-step pronilpotent Lie aglebra, denoted by  $\h_{g,n}$,  was introduced. It was constructed from the Lie algebra $\widehat\v_{g,n}$ of the prounipotent radical of the weighted completion of $\pi_1(\cC^n_{\cH_{g/k}})$, where $k$ is a field of characteristic zero such that the image of $\ell$-adic cyclotomic character $\chi_\ell:G_k\to \Zl^\times$ is infinite.  The weight filtration on $\widehat\v_{g,n}$ used in \cite{wat_sec} was produced by weighted completion. For our purpose, the Lie algebra $\widehat\v_{g,n}$ is replaced with $\widehat\v^\geom_{g,n}$ equipped with the natural weight filtration defined by the Hodge theory in this paper. Together with Proposition \ref{commu diagram for lie alg}, the facts that $H_1(\v^\geom_g)$ is pure of weight $-2$ and the functor $\Gr^W_\bullet$ is exact on the category of MHSs allow us to use the analogous results in \cite[\S 8]{wat_sec} for our case replacing the weighted completions with the relative completions.  \\
\indent
Since the weight filtrations on the relative completions in this paper are defined over $\Q$, the results in this section holds for the continuous relative completions over $\Ql$, where $\ell$ is a prime number. 

\subsection{4-step nilpotent Lie algebras} 
For $g \geq 2$ and $n\geq 0$, define $\h^\geom_{g,n}$, $\h^\geom_{\cC'_{g,n}}$, $\h^\geom_{\cC_{g,n}}$, and $\widehat\h^\geom_{g,n}$ as the 4-step graded Lie algebras
$$
\h^\geom_{g,n}= \Gr^W_\bullet(\v^\geom_{g,n}/W_{-5}),\,\,\,\,\h^\geom_{\cC'_{g,n}}= \Gr^W_\bullet(\v^\geom_{\cC'_{g,n}}/W_{-5}),
$$
$$
\h^\geom_{\cC_{g,n}}= \Gr^W_\bullet(\v^\geom_{\cC_{g,n}}/W_{-5}),\,\,\text{ and }\,\,\widehat\h^\geom_{g,n}= \Gr^W_\bullet(\widehat\v^\geom_{g,n}/W_{-5}),
$$
respectively. In \cite{wat_sec}, the analogue of  $\widehat\h^\geom_{g,n}$ constructed using the weighted completion of $\pi_1(\cC^n_{\cH_{g/k}})$ was denoted by $\h_{g,n}$. 
By Proposition \ref{commu diagram for lie alg}, there are the exact sequences of MHSs:
$$
0\to\p^o\to\v^\geom_{g, n+1} \to \v^\geom_{g,n}\to0,
$$
$$
0\to\p'\to\v^\geom_{\cC'_{g,n}}\to\v^\geom_{g,n}\to 0,
$$
$$
0\to\p\to\v^\geom_{\cC_{g,n}}\to\v^\geom_{g,n}\to 0,
$$
and 
$$
0\to \p\to \widehat\v^\geom_{g,n+1}\to \widehat\v^\geom_{g,n}\to0.
$$
Since the functor $\Gr^W_\bullet$ is exact, we obtain the exact sequences of graded Lie algebras
$$
0\to\Gr^W_\bullet(\p^o/W_{-5})\to\h^\geom_{g, n+1}\overset{\beta_n^o}\to\h^\geom_{g,n}\to0,
$$
$$
0\to\Gr^W_\bullet(\p'/W_{-5})\to\h^\geom_{\cC'_{g,n}}\overset{\beta'_n}\to\h^\geom_{g,n}\to 0,
$$
$$
0\to\Gr^W_\bullet(\p/W_{-5})\to\h^\geom_{\cC_{g,n}}\overset{\bar\beta_n}\to\h^\geom_{g,n}\to 0,
$$
and
$$
0\to \Gr^W_\bullet (\p/W_{-5})\to \widehat\h^\geom_{g, n+1}\overset{\beta_n}\to \widehat\h^\geom_{g,n}\to 0.
$$
Denote the projections $\h^\geom_{g, n+1}\to\h^\geom_{g,n}$, $\h^\geom_{\cC'_{g,n}}\to\h^\geom_{g,n}$, $\h^\geom_{\cC_{g,n}}\to\h^\geom_{g,n}$, and  $\widehat\h^\geom_{g, n+1}\to \widehat\h^\geom_{g,n}$, by $\beta^o_n$, $\beta'_n$, $\bar\beta_n$, and $\beta_n$, respectively.  In this paper, the $\Gr^W_{-1}$ and $\Gr^W_{-2}$ parts  play an essential role in our main results. Their descriptions immediately follow from  Corollary \ref{surface weight -1 -2}, Corollary \ref{open surface weight -1 -2}, Corollary \ref{pure braid weight -1 and -2}, and Corollary \ref{abelian weight -2}.
\begin{proposition}\label{weight -1-2 parts}
Suppose that $g \geq 2$ and $n\geq 0$. There are $\Sp(H)$-equivariant isomorphisms
$$
\Gr^W_r\h^\geom_{g,n}=\begin{cases}
					\bigoplus_{j=1}^nH_j &\text{ for } r =-1,\\
					\bigoplus_{j=1}^n\Lambda^2_0H_j\oplus \bigoplus_{1\leq i<j\leq n} \Q(1)_{ij} \oplus 									H_1(\v^\geom_g) &\text{ for } r=-2,
					\end{cases}
$$
$$
\Gr^W_r\h^\geom_{\cC'_{g,n}}=\begin{cases}
					\bigoplus_{j=0}^nH_j &\text{ for } r =-1,\\
					\bigoplus_{j=0}^n\Lambda^2_0H_j\oplus\Q(1)_{01}\oplus\bigoplus_{1\leq i<j\leq n} \Q(1)_{ij} \oplus 									H_1(\v^\geom_g) &\text{ for } r=-2,
					\end{cases}
$$
$$
\Gr^W_r\h^\geom_{\cC_{g,n}}=\begin{cases}
					\bigoplus_{j=0}^nH_j &\text{ for } r =-1,\\
					\bigoplus_{j=0}^n\Lambda^2_0H_j\oplus\bigoplus_{1\leq i<j\leq n} \Q(1)_{ij} \oplus 									H_1(\v^\geom_g) &\text{ for } r=-2,
					\end{cases}
$$
and
$$
\Gr^W_r\widehat\h^\geom_{g,n}=\begin{cases}
					\bigoplus_{j=1}^nH_j &\text{ for } r =-1,\\
					\bigoplus_{j=1}^n\Lambda^2_0H_j \oplus H_1(\v^\geom_g) &\text{ for } r=-2.
					\end{cases}
$$
\end{proposition}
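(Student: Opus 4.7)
The plan is to read off the description of each $\Gr^W_{-r}$ for $r=1,2$ by applying the exact functor $\Gr^W_\bullet$ to the short exact sequences of mixed Hodge structures provided by Proposition \ref{commu diagram for lie alg}, and then invoke the four cited corollaries to identify the graded pieces of the kernel and cokernel. Since $\h^\geom_{g,n}=\Gr^W_\bullet(\v^\geom_{g,n}/W_{-5})$ etc., the statement for $\h$ is the same as the statement for the corresponding $\v$ in weights $-1$ and $-2$.

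First I would handle $\h^\geom_{g,n}$. Applying $\Gr^W_\bullet$ to the sequence $0\to\p_{g,n}\to\v^\geom_{g,n}\to\v^\geom_g\to 0$ from Proposition \ref{commu diagram for lie alg} yields short exact sequences of $\Sp(H)$-modules in each weight. By Corollary \ref{pure braid weight -1 and -2}, $\Gr^W_{-1}\p_{g,n}\cong\bigoplus_{j=1}^n H_j$ and $\Gr^W_{-2}\p_{g,n}\cong\bigoplus_{j=1}^n\Lambda^2_0 H_j\oplus\bigoplus_{1\leq i<j\leq n}\Q(1)_{ij}$, while by Corollary \ref{abelian weight -2}, $\Gr^W_{-1}\v^\geom_g=0$ and $\Gr^W_{-2}\v^\geom_g=H_1(\v^\geom_g)$. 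The weight $-1$ sequence then gives $\Gr^W_{-1}\v^\geom_{g,n}\cong\bigoplus_{j=1}^n H_j$ directly. In weight $-2$, each graded piece is a (pro-)reductive $\Sp(H)$-module, so Weyl's complete reducibility theorem forces the short exact sequence of $\Sp(H)$-modules to split, giving the claimed direct-sum decomposition. The case of $\widehat\h^\geom_{g,n}$ proceeds identically, using instead the sequence $0\to\bigoplus_{j=1}^n\p_j\to\widehat\v^\geom_{g,n}\to\v^\geom_g\to 0$ from Proposition \ref{commu diagram for lie alg} combined with Corollary \ref{surface weight -1 -2} ($\Gr^W_{-1}\p\cong H$, $\Gr^W_{-2}\p\cong\Lambda^2_0 H$), which yields no cross terms $\Q(1)_{ij}$ since the $\p_j$ are independent copies.

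For the two curve cases $\h^\geom_{\cC_{g,n}}$ and $\h^\geom_{\cC'_{g,n}}$, I would apply $\Gr^W_\bullet$ to the sequences $0\to\p\to\v^\geom_{\cC_{g,n}}\to\v^\geom_{g,n}\to 0$ and $0\to\p'\to\v^\geom_{\cC'_{g,n}}\to\v^\geom_{g,n}\to 0$ from Proposition \ref{commu diagram for lie alg}, using the already established description of $\Gr^W_{-1,-2}\v^\geom_{g,n}$ from the previous step. Corollary \ref{surface weight -1 -2} supplies $\Gr^W_\bullet\p$ (contributing $H_0$ and $\Lambda^2_0 H_0$), while Corollary \ref{open surface weight -1 -2} applied with a single puncture supplies $\Gr^W_\bullet\p'$ (contributing, in addition, the extra class $\Q(1)_{01}$ corresponding to the loop around the removed section $s_1$). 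Splitting the resulting short exact sequences of $\Sp(H)$-modules in each weight, as before, produces the claimed decompositions.

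The only nontrivial point is the splitting of the short exact sequences at $\Gr^W_{-2}$, and this is automatic from reductivity of $\Sp(H)$: each graded quotient of a Lie algebra of a relative completion is an inverse limit of finite-dimensional $\Sp(H)$-modules, each of which is completely reducible. No computation beyond bookkeeping of summands from the cited corollaries is required, which is precisely why the proposition is stated as an immediate consequence of those four corollaries.
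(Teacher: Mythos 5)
Your proposal is correct and follows essentially the same route the paper intends: apply the exact functor $\Gr^W_\bullet$ to the short exact sequences of Proposition \ref{commu diagram for lie alg}, read off the graded pieces of the kernels from Corollaries \ref{surface weight -1 -2}, \ref{open surface weight -1 -2} (with one puncture for $\p'$) and \ref{pure braid weight -1 and -2}, use Corollary \ref{abelian weight -2} for $\v^\geom_g$, and split the resulting extensions of $\Sp(H)$-modules by complete reducibility. The only cosmetic difference is that you obtain $\Gr^W_r\h^\geom_{g,n}$ directly from the configuration-space sequence $0\to\p_{g,n}\to\v^\geom_{g,n}\to\v^\geom_g\to 0$ rather than via the truncated sequences over $\h^\geom_{g,n}$, which changes nothing of substance.
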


\subsection{Sections of $\bar\beta_n$ and $\beta_n$} Here, we restate the results from \cite[\S 8]{wat_sec} needed for our case with relative completions.  
\begin{proposition}[{\cite[Prop. 8.2]{wat_sec}}]\label{section from pi to nth power}
Suppose that $g\geq 2$ and $n\geq 0$. Every section of $\pi:\cC_{\cH_{g,n/\C}}\to \cH_{g,n/\C}$ induces an $\Sp(H)$-equivariant graded Lie algebra section of $\bar\beta_n$, which descends to a section of $\beta_n$.
\end{proposition}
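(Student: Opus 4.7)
The plan is to transport a section $s$ of $\pi$ through the functorialities of relative completion and of its mixed Hodge structure, and then to use the explicit $\Sp(H)$-isotypic description of the weights $-1,-2$ parts in Proposition \ref{weight -1-2 parts} to verify the descent to $\beta_n$.

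First, any section $s$ of $\pi$ yields an orbifold fundamental group section $s_\ast:\pi_1(\cH_{g,n/\C})\to\pi_1(\cC_{\cH_{g,n/\C}})$ of $\pi_\ast$. Because $\rho^\hyp_\cC$ is defined as $\rho^\hyp\circ\pi_\ast$, the pair $(s_\ast,\rho^\hyp)$ is compatible with the reductive quotients, so the universal property of relative completion furnishes a homomorphism $\cD^\geom_{g,n}\to\cD^\geom_{\cC_{g,n}}$ sectioning the projection. The induced Lie algebra map $\tilde\sigma:\v^\geom_{g,n}\to\v^\geom_{\cC_{g,n}}$ is a section of the third exact sequence of Proposition \ref{commu diagram for lie alg}, and by the naturality of the MHS on relative completions (Theorem \ref{hs on relative completion}) it is a morphism of MHSs. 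Since $\Gr^W_\bullet$ is exact on MHSs, applying it and truncating modulo $W_{-5}$ produces an $\Sp(H)$-equivariant graded Lie algebra section $\sigma:\h^\geom_{g,n}\to\h^\geom_{\cC_{g,n}}$ of $\bar\beta_n$, proving the first assertion.

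For the descent to $\beta_n$, place $\sigma$ inside the commutative diagram with exact rows
$$
\xymatrix@R=1em@C=2em{
0\ar[r]&\Gr^W_\bullet(\p/W_{-5})\ar[r]\ar@{=}[d]&\h^\geom_{\cC_{g,n}}\ar[r]^{\bar\beta_n}\ar[d]^{\alpha_\cC}&\h^\geom_{g,n}\ar[r]\ar[d]^{\alpha}&0\\
0\ar[r]&\Gr^W_\bullet(\p/W_{-5})\ar[r]&\widehat\h^\geom_{g,n+1}\ar[r]^{\beta_n}&\widehat\h^\geom_{g,n}\ar[r]&0
}
$$
supplied by Proposition \ref{commu diagram for lie alg}, whose vertical maps $\alpha$ and $\alpha_\cC$ come from the open immersions $\cH_{g,n}\hookrightarrow\cC^n_{\cH_g}$ and $\cC_{\cH_{g,n}}\hookrightarrow\cC^{n+1}_{\cH_g}$ and are therefore surjective. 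A direct diagram chase using surjectivity of $\alpha$ shows that $\sigma$ descends to a section $\widehat\sigma$ of $\beta_n$ as soon as $\alpha_\cC\circ\sigma$ vanishes on $\ker\alpha$. By Proposition \ref{weight -1-2 parts}, $\ker\alpha$ vanishes in weight $-1$ and equals $\bigoplus_{1\le i<j\le n}\Q(1)_{ij}$ in weight $-2$, and as a Lie ideal in the 4-step quotient it is generated by this weight $-2$ piece.

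It therefore suffices to prove that $\sigma(\Q(1)_{ij})\subset\ker\alpha_\cC$ for each $1\le i<j\le n$. This is the crux, and both Propositions \ref{weight -1-2 parts} and \ref{tanaka's computation} are needed: Proposition \ref{tanaka's computation} ensures that $H_1(\v^\geom_g)$ contains no trivial $\Sp(H)$-isotypic summand, so the trivial isotypic component of $\Gr^W_{-2}\h^\geom_{\cC_{g,n}}$ is exactly $\bigoplus_{1\le i<j\le n}\Q(1)_{ij}$. Since $\sigma$ is $\Sp(H)$-equivariant and a section of $\bar\beta_n$, it must restrict to the identity on these trivial classes; and precisely these classes are killed by $\alpha_\cC$, since in $\widehat\h^\geom_{g,n+1}$ the $\Q(1)_{ij}$ for $1\le i<j\le n$ do not appear. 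Because $\sigma$ preserves brackets, the vanishing of $\alpha_\cC\circ\sigma$ on the generators of $\ker\alpha$ propagates automatically to weights $-3,-4$, and the descent is established. The main obstacle is this last step: without the $\Sp(H)$-equivariance forced by the Hodge theoretic MHS together with the absence of trivial summands in $H_1(\v^\geom_g)$ guaranteed by Proposition \ref{tanaka's computation}, the section $\sigma$ would have too much freedom to force the required vanishing on $\ker\alpha$.
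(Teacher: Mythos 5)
Your argument is correct and follows the route the paper intends (the paper itself defers the proof to \cite[Prop.~8.2]{wat_sec}): functoriality of relative completion and of its mixed Hodge structure gives the graded section of $\bar\beta_n$, and the descent to $\beta_n$ reduces to showing that $\alpha_\cC\circ\sigma$ kills $\ker\alpha$, which you correctly deduce from $\Sp(H)$-equivariance, the absence of trivial $\Sp(H)$-summands in $\Gr^W_{-2}\widehat\h^\geom_{g,n+1}$ (Propositions \ref{weight -1-2 parts} and \ref{tanaka's computation}), and the fact that $\ker\alpha$ is generated as an ideal in weight $-2$. The only step you assert rather than prove is that last generation statement, but it does follow from Hain's presentation of $\Gr^W_\bullet\p_{g,n}$: the relation $[u^{(i)},v^{(j)}]=\tfrac{\langle u,v\rangle}{g}\Theta_{ij}$ for $i\neq j$ shows that the kernel of $\Gr^W_\bullet\p_{g,n}\to\bigoplus_j\Gr^W_\bullet\p_j$ is the ideal generated by the classes $\Theta_{ij}$, which span $\bigoplus_{1\leq i<j\leq n}\Q(1)_{ij}$ in weight $-2$.
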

\begin{proposition}With the same notation as above, there is a bijection between the set of the $\Sp(H)$-equivariant graded Lie algebra sections of $\bar\beta_n$ and that of $\beta_n$. 
\end{proposition}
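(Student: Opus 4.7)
The plan is to identify $\Sp(H)$-equivariant graded Lie algebra sections on both sides through their $\Gr^W_{-1}$-data and to match them using the commutative diagram produced from Proposition~\ref{commu diagram for lie alg}. The forward direction (descent) is essentially the content of Proposition~\ref{section from pi to nth power}, while the inverse will be constructed from the rigidity imposed by $\Sp(H)$-equivariance together with Proposition~\ref{tanaka's computation}.

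Applying the exact functor $\Gr^W_\bullet(-/W_{-5})$ to the last two rows of the first commutative diagram in Proposition~\ref{commu diagram for lie alg} yields the commutative diagram of graded Lie algebra extensions
$$
\xymatrix@R=1em@C=2em{
0 \ar[r] & \Gr^W_\bullet(\p/W_{-5}) \ar@{=}[d] \ar[r] & \h^\geom_{\cC_{g,n}} \ar@{->>}[d]^{\alpha} \ar[r]^-{\bar\beta_n} & \h^\geom_{g,n} \ar@{->>}[d]^{\bar\alpha} \ar[r] & 0 \\
0 \ar[r] & \Gr^W_\bullet(\p/W_{-5}) \ar[r] & \widehat\h^\geom_{g,n+1} \ar[r]^-{\beta_n} & \widehat\h^\geom_{g,n} \ar[r] & 0,
}
$$
in which $\alpha$ and $\bar\alpha$ are $\Sp(H)$-equivariant surjections induced by the open immersions $\cC_{\cH_{g,n}}\hookrightarrow \cC^{n+1}_{\cH_g}$ and $\cH_{g,n}\hookrightarrow \cC^n_{\cH_g}$. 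Given an $\Sp(H)$-equivariant section $\bar\sigma$ of $\bar\beta_n$, Proposition~\ref{section from pi to nth power} shows that $\alpha\circ\bar\sigma$ factors through $\bar\alpha$ to yield an $\Sp(H)$-equivariant section $\hat\sigma$ of $\beta_n$; this defines a descent map $D$.

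For the inverse I use Proposition~\ref{weight -1-2 parts}. On $\Gr^W_{-1}$, both $\alpha$ and $\bar\alpha$ are $\Sp(H)$-equivariant isomorphisms after the identification $H_0 \leftrightarrow H_{n+1}$. By Schur's lemma, any $\Sp(H)$-equivariant section of $\bar\beta_n$ must send each generator $H_j \subset \Gr^W_{-1}\h^\geom_{g,n}$ to $H_j + a_j H_0$ for a unique scalar $a_j \in \Q$; the analogous statement holds for $\beta_n$ (with $H_{n+1}$ in place of $H_0$). On $\Gr^W_{-2}$, the kernels $\ker\alpha$ and $\ker\bar\alpha$ both equal the $\Sp(H)$-trivial summand $\bigoplus_{1\le i<j\le n}\Q(1)_{ij}$, which any $\Sp(H)$-equivariant section must send identically to itself, while Proposition~\ref{tanaka's computation} ensures the $H_1(\v^\geom_g)$-summand is also sent identically. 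Hence $\Sp(H)$-equivariant sections of both $\bar\beta_n$ and $\beta_n$ are uniquely parametrized by $n$-tuples $(a_1,\ldots,a_n) \in \Q^n$, and $D$ preserves this parameter. The inverse to $D$ sends a section $\hat\sigma$ of $\beta_n$ with parameters $(a_1,\ldots,a_n)$ to the unique section of $\bar\beta_n$ with the same parameters.

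The main obstacle is to verify that the prescribed weight $-1$ data extends consistently to a graded Lie algebra section in all four weights of $\h^\geom_{\cC_{g,n}}$. Since $\h^\geom_{g,n}$ is generated by its weight $-1$ and $-2$ parts, and the weight $-2$ behavior of any $\Sp(H)$-equivariant section is rigid as shown above, the section is determined on a set of generators. One must then check that the bracket relations in $\h^\geom_{\cC_{g,n}}$ are respected; the relations not already visible in $\widehat\h^\geom_{g,n+1}$ all lie in $\ker\alpha$, which is $\Sp(H)$-trivial in weight $-2$ and hence automatically compatible with the prescribed $\Sp(H)$-equivariant parametrization, while the remaining relations follow from the fact that $\hat\sigma$ is itself a section of $\beta_n$. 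A direct computation on $\Gr^W_{-1}$ then confirms that the construction is inverse to $D$.
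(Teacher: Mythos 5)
Your uniqueness analysis is essentially right: since $\h^\geom_{g,n}$ is generated in weights $-1$ and $-2$, Schur's lemma together with Proposition \ref{tanaka's computation} shows that an $\Sp(H)$-equivariant graded Lie algebra section of $\bar\beta_n$ (or of $\beta_n$) is determined by the tuple $(a_1,\dots,a_n)$ recording its effect on $\Gr^W_{-1}$. But your argument then slides from this injectivity into the claim that sections \emph{are} parametrized by $\Q^n$, and defines the inverse of $D$ as ``the unique section of $\bar\beta_n$ with the same parameters.'' The existence of that section is precisely what has to be proved. The realizable tuples form a proper subset of $\Q^n$ (by Theorem \ref{weight -1 action} they are exactly $\pm e_1,\dots,\pm e_n$, a fact recorded only afterwards and not available to you here), and a priori the realizable subsets for $\bar\beta_n$ and for $\beta_n$ could differ. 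Your attempt to close this gap in the final paragraph does not succeed: to define a homomorphism into $\h^\geom_{\cC_{g,n}}$ on generators you must verify that the defining relations of $\h^\geom_{g,n}$ are sent to zero \emph{in $\h^\geom_{\cC_{g,n}}$ itself}; the fact that $\hat\sigma$ is a section of $\beta_n$ only tells you their images lie in $\ker\alpha$, and the observation that $\ker\alpha$ is $\Sp(H)$-trivial in weight $-2$ does not force those images to vanish. Indeed, a nonvanishing obstruction sitting in exactly such a trivial summand $\Q(1)_{ij}$ is what kills sections in Propositions \ref{sections for one puncture case} and \ref{no section for punctured family}, so this step cannot be waved through.

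The missing idea --- and the whole of the paper's proof --- is that the right-hand square of the diagram you drew is cartesian: the two rows have the same kernel $\Gr^W_\bullet(\p/W_{-5})$, mapped identically, so $\h^\geom_{\cC_{g,n}}\cong\h^\geom_{g,n}\times_{\widehat\h^\geom_{g,n}}\widehat\h^\geom_{g,n+1}$. Given a section $\hat\sigma$ of $\beta_n$, the pair $(\mathrm{id},\,\hat\sigma\circ\bar\alpha)$ is then automatically a well-defined $\Sp(H)$-equivariant graded Lie algebra section of $\bar\beta_n$, with no generators-and-relations check needed, and one verifies directly that this construction inverts the descent map. Separately, in the forward direction you cite Proposition \ref{section from pi to nth power}, which concerns sections of $\bar\beta_n$ induced by geometric sections of $\pi$; for an arbitrary equivariant section $\bar\sigma$ of $\bar\beta_n$ you still need to check that $\alpha\circ\bar\sigma$ annihilates $\ker\bar\alpha$. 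Your rigidity observation that $\bar\sigma$ fixes each $\Q(1)_{ij}$, combined with the fact that $\alpha$ kills these classes and that they generate $\ker\bar\alpha$ as an ideal, does supply this, so the forward direction is repairable; the citation should simply be replaced by that argument.
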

\begin{proof}
Consider the commutative diagram
$$
\xymatrix@R=1em@C=2em{
0\ar[r]&\Gr^W_\bullet(\p/W_{-5})\ar[r]\ar@{=}[d]&\h^\geom_{\cC_{g,n}}\ar[r]^{\bar\beta_n}\ar[d]&\h^\geom_{g,n}\ar[r]\ar[d]& 0\\
0\ar[r]& \Gr^W_\bullet (\p/W_{-5})\ar[r]& \widehat\h^\geom_{g, n+1}\ar[r]^{\beta_n}& \widehat\h^\geom_{g,n}\ar[r]& 0.
}
$$
The right square is a pullback square, and hence each section of $\beta_n$ induces a section of $\bar\beta_n$. 
This gives the inverse of the association given in Proposition \ref{section from pi to nth power}.
\end{proof}
By Proposition \ref{weight -1-2 parts}, the weight $-1$ part of $\Gr^W_{\bullet}\beta_n$ is given by
$$
0\to H_0\to H_0\oplus \bigoplus_{j=1}^nH_j\overset{\Gr^W_{-1}\beta_n}\to \bigoplus_{j=1}^nH_j\to 0.
$$
Therefore, by Schur's Lemma,  each section $\zeta$ of $\beta_n$ on the $\Gr^W_{-1}$ part is given by
$$
\Gr^W_{-1}\zeta: (u_1, \ldots, u_n)\mapsto (\sum_{j=1}^na_ju_j, u_1, \ldots, u_n).
$$
In fact, each section of $\beta_n$ is determined by its effect on  the $\Gr^W_{-1}$ part.
\begin{theorem}[{\cite[Thm.~8.4, Cor.~8.6]{wat_sec}}]\label{weight -1 action}
If $g \geq 3$ and $n\geq 0$, then there are exactly $2n$ $\Sp(H)$-equivariant graded Lie algebra sections of $\bar\beta_n$ and $\beta_n$  given by $\zeta_1^\pm,\ldots, \zeta_n^\pm$, where $\Gr^W_{-1}\zeta_j^\pm$ is given by
$$
\Gr^W_{-1}\zeta_j^\pm: (u_1, \ldots, u_n)\mapsto (\pm u_j, u_1, \ldots, u_n).
$$
In particular, $\bar \beta_{0}$ and $\beta_0$ have no sections. 
\end{theorem}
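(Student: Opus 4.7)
The plan is to determine all $\Sp(H)$-equivariant graded Lie algebra sections of $\beta_n$ by analyzing the $\Gr^W$-grading weight-by-weight and applying Schur's lemma, then using the preceding proposition's bijection to transfer the classification to $\bar\beta_n$. At weight $-1$ the exact sequence $0 \to H_0 \to H_0 \oplus \bigoplus_{j=1}^n H_j \to \bigoplus_{j=1}^n H_j \to 0$ from Proposition \ref{weight -1-2 parts}, together with absolute irreducibility of the fundamental $\Sp(H)$-representation $H$, forces every $\Sp(H)$-equivariant section $\zeta$ to take the form $(u_1, \ldots, u_n) \mapsto (\sum_j a_j u_j, u_1, \ldots, u_n)$ with $(a_1, \ldots, a_n) \in \Q^n$.

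Because the fiber of $\cC^{n+1}_{\cH_g} \to \cH_g$ is the product $C^{n+1}$, distinct copies $\p^{(i)}, \p^{(j)}$ commute inside $\widehat\h^\geom_{g,n+1}$. For $i \neq j$ this reduces the bracket compatibility $[\zeta(u^{(i)}), \zeta(v^{(j)})] = \zeta([u^{(i)}, v^{(j)}])$ to $a_i a_j [u,v]^{(0)}_\Lambda = 0$ in $\Lambda^2_0 H_0$, so $a_i a_j = 0$ whenever $i \neq j$; hence at most one coefficient $a_j$ is nonzero.

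To pin down $a_j \in \{\pm 1\}$, I would exploit the higher-weight structure together with Proposition \ref{tanaka's computation}, which rules out $\Sp(H)$-equivariant contributions between $H_1(\v^\geom_g)$ and $\Lambda^2_0 H$. Schur then writes $\zeta$ at $\Gr^W_{-2}$ as $(X_1,\ldots,X_n,y) \mapsto (\sum b_j X_j, X_1,\ldots,X_n, y)$ with $b_j \in \Q$, and the $[H_j, H_j]$-compatibility gives $b_j = a_j^2$. At $\Gr^W_{-3}$ the Schur constants $c_j$ measuring the component $\Gr^W_{-3}\p^{(j)} \to \Gr^W_{-3}\p^{(0)}$ of $\zeta$ admit two computations on a common isotypic piece: the internal bracket $[\Gr^W_{-1}, \Gr^W_{-2}]$ inside the $j$th copy of $\p$ yields $c_j = a_j^3$, while the nontrivial adjoint action $\v^\geom_g \to \Der \p$ produces an $\Sp(H)$-equivariant map $H_1(\v^\geom_g) \otimes H \to \Gr^W_{-3}\p$, and the compatibility with $[\zeta(y), \zeta(u^{(j)})]$ gives $c_j = a_j$ on the isotypic components where this derivation action is nonzero. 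Equating yields $a_j = a_j^3$, so $a_j \in \{0, \pm 1\}$.

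The final and most delicate step is to exclude the all-zero case. A section of $\beta_n$ with $a_1 = \cdots = a_n = 0$ would, after restricting to the $H_1(\v^\geom_g)$-subalgebra (which by Tanaka is forced to embed diagonally) and projecting along the map $\widehat\h^\geom_{g,n+1} \to \widehat\h^\geom_{g,1}$ that forgets the marked points $1, \ldots, n$, produce an $\Sp(H)$-equivariant graded Lie algebra section of the extension $0 \to \Gr^W_\bullet(\p/W_{-5}) \to \widehat\h^\geom_{g,1}/W_{-5} \to \widehat\h^\geom_{g,0}/W_{-5} \to 0$, reducing the problem to the $n = 0$ case. The main obstacle is to show this extension does not split: the obstruction lies in the $\Gr^W_{-4}\p$-component of brackets $[y_1, y_2]$ for $y_i \in H_1(\v^\geom_g)$ inside $\widehat\h^\geom_{g,1}$, and its nonvanishing as an $\Sp(H)$-equivariant map $\Lambda^2 H_1(\v^\geom_g) \to \Gr^W_{-4}\p$ must be checked using the structure of $\Gr^W_\bullet\p$ as $\L(H)/(\Theta)$ together with the derivation action $\v^\geom_g \to W_{-2}\Der \p$ already employed above. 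Once this non-splitting is established, the surviving possibilities are precisely the $2n$ sections $\zeta_j^\pm$ for $j = 1, \ldots, n$ described in the statement, and the $n = 0$ case yields the assertion that $\bar\beta_0$ and $\beta_0$ have no sections.
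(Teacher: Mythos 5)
First, note that the paper does not actually prove this theorem: it is imported verbatim from \cite[Thm.~8.4, Cor.~8.6]{wat_sec}, and the only "proof" content here is the preamble of \S\ref{proofs} (explaining why the weighted-completion argument of \cite{wat_sec} transfers to relative completion, using purity of $H_1(\v^\geom_g)$ and exactness of $\Gr^W_\bullet$) together with the Remark identifying the decisive input: the bracket of $\Gr^W_{-2}\v^\geom_g$ maps \emph{nontrivially and diagonally} into $\bigoplus_j\Gr^W_{-4}\p_j$. Your outline has the right overall shape and correctly locates that same weight~$-4$ obstruction, but the proposal has genuine gaps precisely at the points that carry the theorem. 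The most serious one is that the nonvanishing of $\Lambda^2 H_1(\v^\geom_g)\to\Gr^W_{-4}\p$ — which you yourself flag as "must be checked" — \emph{is} the theorem in the cases $n=0$ and "all $a_j=0$"; without it nothing rules out the trivial weight $-1$ behavior, so the claim "$\bar\beta_0$ and $\beta_0$ have no sections" is not established. Moreover, mere nonvanishing is not quite enough: one must also rule out that a section absorbs this bracket through a component $\Gr^W_{-4}\v^\geom_g\to\Gr^W_{-4}\p_0$, which is where the "diagonal" part of the cited fact and the four-step truncation really earn their keep.

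Two further steps are shaky. Your weight~$-3$ argument forcing $a_j=a_j^3$ requires the derivation action $H_1(\v^\geom_g)\otimes H\to\Gr^W_{-3}\p$ to be nonzero on an isotypic component also hit by the internal bracket $H\otimes\Lambda^2_0H\to\Gr^W_{-3}\p$; you assert this without verification, and the paper's Remark points to weight $-4$, not $-3$, as the locus of the non-formal input — a strong hint that the normalization $a_j\in\{0,\pm1\}$ is extracted from the weight $-4$ relation (where $d_j=a_j^4$ must equal $1$ on the support of the diagonal bracket) rather than from a weight $-3$ derivation that may well vanish. Separately, the reduction of the all-zero case to $n=0$ by "restricting to the $H_1(\v^\geom_g)$-subalgebra" is not well-formed: because of the very nonvanishing you are trying to exploit, the Lie subalgebra of $\widehat\h^\geom_{g,n}$ generated by $H_1(\v^\geom_g)$ is not a copy of $\widehat\h^\geom_{g,0}$ (its brackets spill into the $\Gr^W_{-4}\p_j$), so you do not obtain a section of $\beta_0$; the contradiction must instead be derived directly from the weight $-4$ bracket identity in $\widehat\h^\geom_{g,n+1}$. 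Finally, the statement asserts there are \emph{exactly} $2n$ sections: you classify the possible $\Gr^W_{-1}$ parts but address neither uniqueness of a graded section with given $\Gr^W_{-1}$ part nor existence (the latter comes from Proposition~\ref{geo sect induce lie sect}, via the tautological sections and their hyperelliptic conjugates), so the count is not actually pinned down.
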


\begin{remark}It is necessary to consider the first 4 steps of the Lie algebras of the completions. A key result used in determining the sections of $\beta_n$ in \cite{wat_sec} is the fact that the bracket of $\Gr^W_{-2}\v_g\subset \Gr^W_{-2}\h_{g,n}$  maps nontrivially and diagonally into $\bigoplus_{j=1}\Gr^W_{-4}\p_j$ in $\Gr^W_{-4}\h_{g,n}$. 
\end{remark}
Then we have the following correspondence for the sections of $\pi: \cC_{\cH_{g,n/\C}}\to \cH_{g,n/\C}$ and $\bar\beta_n$. 
 \begin{proposition}\label{geo sect induce lie sect}
Suppose that $g\geq 3$ and $n\geq 1$.  For $j=1, \ldots, n$,  the sections $s_j$ and $J\circ s_j$ induce $\zeta_j^+$ and $\zeta_j^{-}$, respectively. 
\end{proposition}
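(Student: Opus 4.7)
The plan is to invoke Theorem~\ref{weight -1 action}, which tells us that any $\Sp(H)$-equivariant graded Lie algebra section of $\bar\beta_n$ is determined by its restriction to $\Gr^W_{-1}$. So it suffices to compute the $\Gr^W_{-1}$ part of the section of $\bar\beta_n$ associated by Proposition~\ref{section from pi to nth power} to $s_j$ (respectively $J\circ s_j$), and verify that it coincides with the formula defining $\Gr^W_{-1}\zeta_j^+$ (respectively $\Gr^W_{-1}\zeta_j^-$).

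By Proposition~\ref{weight -1-2 parts} we have $\Sp(H)$-equivariant identifications $\Gr^W_{-1}\h^\geom_{\cC_{g,n}}\cong H_0\oplus\bigoplus_{k=1}^nH_k$ and $\Gr^W_{-1}\h^\geom_{g,n}\cong\bigoplus_{k=1}^nH_k$, where for $k\ge 1$ the summand $H_k$ captures loops that move the $k$-th marked point and $H_0=\Gr^W_{-1}\p=H_1(C,\Q)$ is the fiber contribution coming from the sequence $0\to\p\to\v^\geom_{\cC_{g,n}}\to\v^\geom_{g,n}\to 0$ of Proposition~\ref{commu diagram for lie alg}. Under these identifications, $\Gr^W_{-1}\bar\beta_n$ is the projection that forgets $H_0$, so any $\Sp(H)$-equivariant section $\zeta$ of $\bar\beta_n$ must satisfy
\begin{equation*}
\Gr^W_{-1}\zeta\colon (u_1,\ldots,u_n)\;\mapsto\;\bigl(\sigma(u_1,\ldots,u_n),\,u_1,\ldots,u_n\bigr)
\end{equation*}
for some $\Sp(H)$-equivariant map $\sigma\colon\bigoplus_{k=1}^nH_k\to H_0$, and the task reduces to computing $\sigma$ for the sections induced by $s_j$ and $J\circ s_j$.

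Next I would carry out the geometric computation of $\sigma$. Choose a loop $\gamma$ in $\cH_{g,n}$ whose image in the $k$-th summand $H_k$ is a vector $u_k$; concretely, $\gamma$ may be taken to move only the $k$-th marked point around a cycle of class $u_k$ in $C$. The composition $s_j\circ\gamma$ is a loop in $\cC_{\cH_{g,n}}$, and its $H_0$-component in $\Gr^W_{-1}\h^\geom_{\cC_{g,n}}$ is the homology class of the trajectory traced out by the added fiber coordinate, which under $s_j$ is exactly the trajectory of the $j$-th marked point along $\gamma$. This trajectory represents $u_k$ in $H_0$ if $k=j$ and is trivial otherwise. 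Consequently $\sigma(u_1,\ldots,u_n)=u_j$ for the section induced by $s_j$, matching $\Gr^W_{-1}\zeta_j^+$. For $J\circ s_j$ the same analysis applies with the added fiber coordinate replaced by the $J$-image of the $j$-th marked point; since the hyperelliptic involution acts as $-1$ on $H=H_1(C,\Q)$, the trace contributes $-u_j$ to $H_0$, yielding $\sigma(u_1,\ldots,u_n)=-u_j$ and thus $\Gr^W_{-1}\zeta_j^-$.

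The main subtlety is justifying rigorously that the $H_0$-component of the class of $s_j\circ\gamma$ in $\Gr^W_{-1}\h^\geom_{\cC_{g,n}}$ really equals the stated geometric trace. This is where naturality of the Hodge-theoretic weight filtration on the relative completion is used: the splitting of $0\to\Gr^W_{-1}\p\to\Gr^W_{-1}\v^\geom_{\cC_{g,n}}\to\Gr^W_{-1}\v^\geom_{g,n}\to 0$ induced by $s_j$ coincides, under the $\Sp(H)$-equivariant identifications of Proposition~\ref{ab of unip radical} and Proposition~\ref{weight -1-2 parts}, with the algebraic splitting one reads off from the short exact sequence of topological fundamental groups. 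Once this compatibility is in place the argument is a direct unwinding of the geometry, so I expect this identification of summands and the corresponding action of $J$ to be the principal technical step; the remaining verification is formal.
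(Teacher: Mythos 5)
Your proposal is correct in substance but takes a different route from the paper. The paper does not compute with point--pushing loops directly: it observes that $s_j$ is the pullback of the tautological section $\xi$ of $\cC_{g,1/\C}\to\M_{g,1/\C}$ along the forgetful morphism $\psi_j:[C;\bar x_1,\ldots,\bar x_n]\mapsto[C;\bar x_j]$, invokes naturality of relative completion to get a commutative square relating $\bar\beta_n$ to $\Gr^W_\bullet(\u^\geom_{\cC_{g,1}}/W_{-5})\to\Gr^W_\bullet(\u^\geom_{g,1}/W_{-5})$, and then cites \cite[Prop.~10.8]{hain_ration} for the formula $\Gr^W_{-1}d\xi:(v,u_1)\mapsto(v,u_1,u_1)$; a one-line diagram chase in weight $-1$ then forces $\zeta=\zeta_j^+$, and $J_*=-\id$ on $H$ handles $J\circ s_j$ exactly as you do. Your argument instead re-derives the content of Hain's Prop.~10.8 by hand via the trace of the marked points under point--pushing. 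What you gain is geometric transparency and self-containedness; what you lose is that the step you yourself flag as the ``principal technical step'' --- that the $H_0$-component of the image of $s_j\circ\gamma$ in $\Gr^W_{-1}$ of the relative completion equals the topological trace of the section --- is precisely the nontrivial content, and it is left as an asserted compatibility rather than proved. One point you should make precise if you carry this out: ``the $H_0$-component'' of an element of $\Gr^W_{-1}\h^\geom_{\cC_{g,n}}\cong H_0\oplus\bigoplus_{k=1}^nH_k$ is not defined by the abstract $\Sp(H)$-module decomposition (all summands are isomorphic, and a choice of complement to $H_0$ is exactly a choice of splitting of $\Gr^W_{-1}\bar\beta_n$, which would be circular). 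The clean fix is to descend to $\beta_n$ via the map $\h^\geom_{\cC_{g,n}}\to\widehat\h^\geom_{g,n+1}$ of Proposition~\ref{commu diagram for lie alg}, where $\Gr^W_{-1}\widehat\h^\geom_{g,n+1}=\bigoplus_{i=0}^n\Gr^W_{-1}\p_i$ is canonically indexed by the coordinate projections of $\cC^{n+1}_{\cH_g}$; your phrase ``the added fiber coordinate'' gestures at this, and with that identification your computation $(u_1,\ldots,u_n)\mapsto(\pm u_j,u_1,\ldots,u_n)$ is the right one.
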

\begin{proof}
Denote the tautological section of $\cC_{g,1/\C}\to \M_{g,1/\C}$ by $\xi$. Let $j\in \{1, \ldots, n\}$. There is a unique morphism $\psi_j: \cH_{g,n/\C}\to \M_{g,1/\C}$ given by taking the geometric point $[C; \bar x_1, \ldots, \bar x_n]\mapsto [C; \bar x_j]$. Then,  the section $s_j$ of $\pi: \cC_{\cH_{g,n/\C}}\to \cH_{g,n/\C}$ can be obtained as the pullback of $\xi$ along $\psi_j$. 
  By Theorem \ref{weight -1 action}, the section $s_j$ induces a section $\zeta$ of $\bar\beta_n$ that is equal to $\zeta_l^+$ or $\zeta_l^-$ for some $1\leq \ell \leq n$. By the naturality of relative completion, the morphism $\psi_j$ produces the commutative diagram
$$
\xymatrix@R=1em@C=2em{
0\ar[r]&\Gr^W_\bullet(\p/W_{-5})\ar[r]\ar@{=}[d]&\Gr^W_\bullet(\u^\geom_{\cC_{g,1}}/W_{-5})\ar[r]&\Gr^W_\bullet(\u^\geom_{g,1}/W_{-5})\ar@/_1pc/[l]_-{d\xi}\ar[r]&0\\
0\ar[r]&\Gr^W_\bullet(\p/W_{-5})\ar[r]&\h^\geom_{\cC_{g,n}}\ar[u]\ar[r]^{\bar\beta_n}&\h^\geom_{g,n}\ar@/^1pc/[l]^-{\zeta}\ar[u]\ar[r]&0,
}
$$
where $d\xi$ be the graded Lie algebra section of $\Gr^W_\bullet(\u^\geom_{\cC_{g,1}}/W_{-5})\to \Gr^W_\bullet(\u^\geom_{g,1}/W_{-5})$ induced by the tautological section $\xi$. Since $s_j$ is the pullback of $\xi$ along $\psi_j$ and $\zeta$ is induced by $s_j$, in weight $-1$, we have the commutative diagram
$$
\xymatrix@R=1em@C=2em{
0\ar[r]&H_0\ar[r]\ar@{=}[d]&\Lambda^3_0H\oplus H_0\oplus H_1\ar[r]&\Lambda^3_0H\oplus H_1\ar[r]\ar@/_1pc/[l]_-{\Gr^W_{-1}d\xi}&0\\
0\ar[r]&H_0\ar[r]&H_0\oplus \bigoplus_{j=1}^nH_j\ar[r]\ar[u]&\bigoplus_{j=1}^nH_j\ar@/^1pc/[l]^-{\Gr^W_{-1}\zeta}\ar[r]\ar[u]\ar[r]&0.
}
$$
   By \cite[Prop.~10.8]{hain_ration}, we have
$$
\Gr^W_{-1}d\xi: (v, u_1)\mapsto (v, u_1, u_1),
$$
where $v$ is in $\Lambda^3_0H$ and $u_1$ in $H_1$.  On the other hand, we have 
$$
\Gr^W_{-1}\zeta: (u_1, \ldots, u_n)\mapsto (\pm u_l, u_1,\ldots, u_n).
$$
Since the right-hand vertical map sends $(u_1, \ldots, u_n)$ to $(0, u_j)$, it follows from the commutativity of the diagram that $\pm u_l = u_j$ in $H_0$, and therefore, $\zeta = \zeta^+_j$.   The restriction of the hyperelliptic involution $J$ to the fiber $C$ induces $-\id$ on $H$, and hence $J\circ s_j$ yields $\zeta^-_j$. 
\end{proof}

\subsection{Proof of Theorem 1}
 Let $x$ be a section of $\cC_{\cH_{g,n/\C}}\to \cH_{g,n/\C}$. By pulling back $x$ over $\cH_{g,n/\C}[m]$ with $m\geq1$, we consider it as a section of $\cC_{\cH_{g,n/\C}}[m]\to \cH_{g,n/\C}[m]$. Let $\kappa_x$ be the corresponding class in $H^1(\cH_{g,n}[m], \H_\Z)$ defined by the  Abel-Jacobi map. The section $x$ induces an $\Sp(H)$-equivariant graded Lie algebra section of $\bar \beta_n$. By Theorem \ref{weight -1 action}, it is equal to one of $\zeta^\pm_1, \ldots, \zeta^\pm_n$. Without loss of generality, say it is $\zeta^+_1$. Then by Proposition \ref{geo sect induce lie sect}, we have $\kappa_x = \kappa_1^\hyp$ in $H^1(\cH_{g,n}[m], \H_\Q)$. Let $m\geq 3$.   Then $\cH_{g,n/\C}[m]$ is a smooth quasi-projective scheme over $\C$ and hence $x$ can be defined over a field $k$ that is finitely generated over $\Q$.   Let $\bar k$ be the algebraic closure of $k$ in $\C$ and $G_k$ the absolute Galois group of $k$. Let $\ell$ be a prime number so that the $\ell$-adic cyclotomic character $\chi_\ell:G_k\to \Zl^\times$ has infinite image. 
There is the exact sequence of algebraic fundamental groups
$$
1\to \pi_1^\alg(\cH_{g,n/\bar k}[m])\to \pi_1^\alg(\cH_{g,n/k}[m])\to G_k\to 1,
$$
which yields a spectral sequence
$$
E^{s, t}_2 = H^s(G_k, H^t(\pi_1^\alg(\cH_{g,n/\bar k}[m]), H_\Ql))\Rightarrow H^{s+t}(\pi_1^\alg(\cH_{g,n/k}[m]), H_\Ql).
$$
This gives an exact sequence
\begin{align*}
0\to H^1(G_k, H^0(\pi_1^\alg(\cH_{g,n/\bar k}[m]), H_\Ql))\to H^{1}(\pi_1^\alg(\cH_{g,n/k}[m]), H_\Ql)\hspace{1in}\\
\to H^0(G_k, H^1(\pi_1^\alg(\cH_{g,n/\bar k}[m]), H_\Ql)).
\end{align*}
Since $H_\Ql$ is an irreducible $\Sp(H_\Ql)$-module and $\rho^\hyp_\ell[m]: \pi_1^\alg(\cH_{g,n/\bar k}[m], \etabar_n)\to \Sp(H_\Ql)$ has Zariski-dense image, we have $H^0(\pi_1^\alg(\cH_{g,n/\bar k}[m]), H_\Ql)=0$. Therefore, it follows that the restriction maps in the commutative diagram 
$$\xymatrix@R=1em@C=2em{
H^{1}(\pi_1^\alg(\cH_{g,n/k}[m]), H_\Ql)\ar[r]\ar[d]^{\cong} &H^{1}(\pi_1^\alg(\cH_{g,n/\bar k}[m]), H_\Ql)^{G_k}\ar[r] \ar[d]^{\cong}&H^{1}(\pi_1^\alg(\cH_{g,n/\bar k}[m]), H_\Ql)\ar[d]^{\cong}\\
H^{1}_\et(\cH_{g,n/k}[m], \H_\Ql)\ar[r] &H^{1}_\et(\cH_{g,n/\bar k}[m], \H_\Ql)^{G_k}\ar[r]&H^{1}_\et(\cH_{g,n/\bar k}[m], \H_\Ql)
}
$$ are injective. The base change morphism $\cH_{g,n/\C}[m] \to\cH_{g,n/\bar k}[m]$ induces an isomorphism $H^{1}(\pi_1^\alg(\cH_{g,n/\bar k}[m]), H_\Ql) \cong H^{1}(\pi_1^\alg(\cH_{g,n/\C}[m]), H_\Ql)$.   Since $x$ and $s_1$ are defined over $k$, we may view the classes $\kappa_x$ and $\kappa^\hyp_1$ as elements in $H^1_\et(\cH_{g,n/k}[m], \H_\Ql)$ given by the $\ell$-adic Abel-Jacobi map.  Since $\kappa_x = \kappa_1^\hyp$ in $H^1(\cH_{g,n/\C}[m], \H_\Ql)$, it then follows that we have $\kappa_x = \kappa_1^\hyp$ in $H^1_\et(\cH_{g,n/k}[m], \H_\Ql)$. 
This allows us to use the proof of \cite[Thm.~1]{wat_sec} to conclude  that $x = s_1$. Here is a brief sketch of the argument. 
Let $J_{\cH_{g,n/k}}[m]\to \cH_{g,n/k}[m]$ be the relative Jacobian associated to $\cC_{\cH_{g,n/k}}[m]\to \cH_{g,n/k}[m]$. By \cite[Cor.~12.4]{hain_ration}, $t:=s_1 -x$ is torsion in $J_{\cH_{g,n/k}}[m](\cH_{g,n/k}[m])$. If $t =0$, then $x =s_1$. Otherwise, $x$ is disjoint from $s_1$, and then there is an induced $k$-morphism $\phi: \cH_{g,n/k}[m]\to \cH_{g,2/k}$ determined by $[C]\mapsto [C; s_1, x]$. The morphism $\phi$ induces a $\GSp(H_\Ql)$-module map $\Gr^{\mathcal{W}}_\bullet d\phi: \Gr^\mathcal{W}_\bullet\v_{g,n}[m]/\mathcal{W}_{-3}\to \Gr^\mathcal{W}_\bullet \v_{g,2}/\mathcal{W}_{-3}$ such that $\Gr^\mathcal{W}_{-1}d\phi: H_\Ql^{\oplus n}\to H_\Ql^{\oplus 2}$ is given by $(u_1, \ldots, u_n)\mapsto (u_1, u_1)$. 
By \cite[Prop.~9.5]{wat_sec}, this is impossible. 
 Therefore, the universal hyperelliptic curve $\cC_{\cH_{g,n/\C}}\to \cH_{g,n/\C}$ admits exactly $2n$ sections consisting of $s_1, \ldots, s_n$ and their hyperelliptic conjugates.

\subsection{Sections of $\beta'_n$} Here, we will show that removing the section $s_1$ kills the sections $\zeta_1^\pm$ and $\zeta^{-}_j$ for $j= 2, \ldots, n$. 
\begin{proposition}\label{sections for one puncture case}
When $g\geq 3$ and $n\geq 1$, the $\Sp(H)$-equivariant graded Lie algebra sections of $\bar\beta_n$ that lift to a section of $\beta'_n$ are exactly $\zeta_2^+, \ldots, \zeta_n^+$. \end{proposition}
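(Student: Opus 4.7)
The proposition has two halves: for $j\ge 2$, $\zeta_j^+$ lifts; and the remaining $n+1$ sections ($\zeta_1^\pm$ and $\zeta_j^-$ with $j\ge 2$) do not. I would treat them separately.

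For the existence half, I would use a direct geometric construction. For each $j\ge 2$, the tautological sections $s_j$ and $s_1$ correspond to distinct marked points and are therefore disjoint as subschemes of $\cC_{\cH_{g,n}}[m]$, so $s_j$ factors through the open immersion $\cC_{\cH_{g,n}}'[m]\hookrightarrow \cC_{\cH_{g,n}}[m]$ and defines a section $\widetilde s_j$ of $\pi'$. Applying relative completion to $\widetilde s_j$, truncating at $W_{-5}$, and passing to $\Gr^W_\bullet$ yields an $\Sp(H)$-equivariant graded Lie algebra section $\widetilde\zeta_j$ of $\beta'_n$. By naturality of the projection $f\colon\h^\geom_{\cC'_{g,n}}\to\h^\geom_{\cC_{g,n}}$ and Proposition \ref{geo sect induce lie sect}, the composition $f\circ\widetilde\zeta_j$ equals $\zeta_j^+$, so $\zeta_j^+$ lifts.

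For the non-existence half, I would work Lie-algebraically and argue by obstruction. The kernel $K=\ker f$ is the graded ideal of $\Gr^W_\bullet(\p'/W_{-5})$ generated by the $\Sp(H)$-trivial line $\Q(1)_{01}\subset \Gr^W_{-2}\h^\geom_{\cC'_{g,n}}$, the class of the loop around the missing section $s_1$. Suppose $\zeta\in\{\zeta_1^+,\zeta_1^-\}\cup\{\zeta_j^-:j\ge 2\}$ admits a lift $\zeta'$. On $\Gr^W_{-1}$, $\zeta'$ is forced to coincide with $\zeta$, so $\Gr^W_{-1}\zeta'$ is pinned down. The essential constraint is then a bracket computation: using Hain's relation $[u^{(0)},v^{(j)}]=\tfrac{\langle u,v\rangle}{g}\Theta_{0j}$ in $\Gr^W_\bullet\p_{g,n+1}$, together with the commutative diagram of Proposition \ref{commu diagram for lie alg} that embeds $\Gr^W_\bullet\p_{g,n+1}$ inside $\h^\geom_{g,n+1}$ and projects it onto $\h^\geom_{\cC'_{g,n}}$, one tracks how $\Gr^W_{-2}\zeta'$ evaluated on suitable brackets produces a nonzero $\Theta_{01}$-type term. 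The only summand of $\Gr^W_{-2}\h^\geom_{\cC'_{g,n}}$ that can absorb such a term is $\Q(1)_{01}$; but then bracket compatibility at $\Gr^W_{-3}$ and $\Gr^W_{-4}$, combined with the 4-step structure of the $\h^\geom$-Lie algebras, forces a contradiction for precisely the disallowed $(j,\pm)$ pairs, while for $\zeta_j^+$ with $j\ge 2$ the analogous term vanishes by $\Sp(H)$-equivariance and the geometric lift from the first half is unobstructed.

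The main obstacle will be the bracket bookkeeping in the non-existence half: showing that a putative $\zeta'$ lifting $\zeta_1^\pm$ or $\zeta_j^-$ ($j\ge 2$) is forced via the Hain relation to introduce a $\Q(1)_{01}$-contribution whose consequences in $\Gr^W_{-3}$ and $\Gr^W_{-4}$ cannot be reconciled with an $\Sp(H)$-equivariant graded Lie algebra splitting, while the corresponding computation collapses for the geometric lifts of $\zeta_j^+$ ($j\ge 2$). This is the direct analog for the 1-punctured fiber $C'$ of the core computation of \cite[\S 8]{wat_sec} that underlies Theorem \ref{weight -1 action}, with the extra summand $\Q(1)_{01}$ in $\Gr^W_{-2}\h^\geom_{\cC'_{g,n}}$ serving as the obstruction reservoir that distinguishes the punctured case from the complete case.
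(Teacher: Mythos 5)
Your existence half matches the paper exactly: $s_j$ ($j\ge 2$) is disjoint from $s_1$, hence gives a section of $\pi'$, hence a graded Lie algebra section of $\beta'_n$ lifting $\zeta_j^+$ by Proposition \ref{geo sect induce lie sect}. No issue there.

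The non-existence half, however, has a genuine gap: you have correctly named the ingredients (the line $\Q(1)_{01}$ coming from $\Gr^W_{-2}\p'$ as the obstruction reservoir, and the Hain relations as the mechanism), but the decisive computation is not carried out, and your stated expectation of where the contradiction arises is misdirected. You say the contradiction comes from ``bracket compatibility at $\Gr^W_{-3}$ and $\Gr^W_{-4}$, combined with the 4-step structure'' --- but the obstruction here lives entirely in weight $-2$; weights $-3$ and $-4$ play no role in this proposition (they are needed only for Theorem \ref{weight -1 action}, which you are allowed to quote). The actual argument is: a section $s$ of $\beta'_n$ descends to a section of $\bar\beta_n$, so $\Gr^W_{-1}s$ is one of the $\Gr^W_{-1}\zeta_j^{\pm}$; one then evaluates $\Gr^W_{-2}s(\Theta_1)$ in two ways. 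First, using the relation $\Theta_1=-\tfrac{1}{g}\sum_{j>1}\Theta_{1j}$, which holds in the source $\h^\geom_{g,n}$ (no index $0$ present), and expanding each $\Theta_{1j}$ as a bracket of weight $-1$ elements through $\Gr^W_{-1}s$; second, expanding $\Theta_1$ itself directly as $\sum_l[\Gr^W_{-1}s(a_l^{(1)}),\Gr^W_{-1}s(b_l^{(1)})]$ and then rewriting via the relation $\Theta_1=-\tfrac1g\Theta_{01}-\tfrac1g\sum_{j>1}\Theta_{1j}$ valid in the target $\h^\geom_{\cC'_{g,n}}$, where the index $0$ \emph{is} present. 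Comparing the $\Q(1)_{01}$-components gives $\tfrac{-2\pm 2g}{g}$ versus $0$ for $\zeta_1^{\pm}$ (this is the same computation as in Proposition \ref{no section for punctured family}) and $+\tfrac1g$ versus $-\tfrac1g$ for $\zeta_j^-$, $j\ge2$; either way $\Theta_{01}=0$ is forced, contradicting the fact that $\Q(1)_{01}$ injects into $\Gr^W_{-2}\h^\geom_{\cC'_{g,n}}$. This two-way evaluation of a single relator, exploiting that the quadratic relation satisfied by $\Theta_1$ changes when the extra puncture index $0$ is adjoined, is the entire content of the non-existence half, and it is the step your proposal defers rather than supplies.
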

\begin{proof}
Since each section $s_j$ of $\pi$ with  $2\leq j\leq  n$ is disjoint from $s_1$,  it yields a section of $\pi'$, and hence that of $\beta'_n$. Since $\zeta_j^+$ is induced by $s_j$ by Proposition \ref{geo sect induce lie sect}, the section of $\beta'_n$ induced 
by $s_j$ is a lift of $\zeta_j^+$ to $\h^\geom_{\cC'g,n}$. Now, let $s$ be an $\Sp(H)$-equivariant graded Lie algebra section of $\beta'_n$. Then composing with the natural projection $\h^\geom_{\cC'g,n}\to \h^\geom_{\cC g, n}$, we obtain a section of $\bar\beta_n$. By Theorem \ref{weight -1 action}, $\Gr^W_{-1}s$ is given by $(u_1, \ldots, u_n)\mapsto (\pm u_j, u_1, \ldots, u_n)$. Let $p_0: \Gr^W_{-1}\h^\geom_{\cC'g,n}\to H_0$ be the projection onto the $0$th copy of $H$ in  $\bigoplus_{j=0}^nH_j$. Note that $\Gr^W_{-2}\h^\geom_{\cC'g,n}$ contains a copy of $\Q(1)$,  which comes from $\Gr^W_{-2}\p'$. This is the image in $\Gr^W_{-2}\h^\geom_{\cC'g,n}$ of $\Q(1)_{01}$ in $\Gr^W_{-2}\h^\geom_{g,n+1}$. Then the proof of Theorem \ref{no section for punctured family} shows that $p_0\circ \Gr^W_{-1}s(u_1, \ldots, u_n)\not =\pm u_1$. Therefore,  $\zeta^\pm_1$ do not lift to  a section of $\beta'_n$. Next, we will show that $p_0\circ\Gr^W_{-1}s(u_1, \ldots, u_n) \not= -u_j$ for any $2\leq j\leq n$.  Without loss of generality, assume that  $p_0\circ\Gr^W_{-1}s(u_1, \ldots, u_n) = -u_2$. Then, since $\Theta_1 = -\frac{1}{g}\sum_{j > 1}\Theta_{1j}$ in $\Gr^W_{-2}\h^\geom_{g,n}$, we have
\begin{align*}
\Gr^W_{-2}s(\Theta_1) 	=& -\frac{1}{g}\sum_{j>1}\Gr^W_{-2}s(\Theta_{1j})\\
					=& -\frac{1}{g}\sum_{j> 1}\sum_{l=1}^g[\Gr^W_{-1}s(a_l^{(1)}), \Gr^W_{-1}s(b_l^{(j)})])\\
					=& -\frac{1}{g}\sum_{l=1}^g[a_l^{(1)}, -b_l^{(0)}+b_l^{(2)}]-\frac{1}{g}\sum_{j>2}\sum_{l=1}^g[a_l^{(1)}, b_l^{(j)}]\\
					=& \frac{1}{g}\Theta_{01}-\frac{1}{g}\Theta_{12}-\frac{1}{g}\sum_{j>2}\Theta_{1j}\\
					=&\frac{1}{g}\Theta_{01}-\frac{1}{g}\sum_{j>1}\Theta_{1j}.
\end{align*}

On the other hand, in $\Gr^W_{-2}\h^\geom_{\cC'g,n}$, we have 
\begin{align*}
\Gr^W_{-2}s(\Theta_1)      =& \Gr^W_{-2}s(\sum_{l=1}^g[a_l^{(1)}, b_l^{(1)}])\\
					=& \sum_{l=1}^g[\Gr^W_{-1}s(a_l^{(1)}), \Gr^W_{-1}s(b_l^{(1)})]\\
					=& \sum_{l=1}^g[a_l^{(1)},  b_l^{(1)}]\\
					=&\Theta_1\\
					=&-\frac{1}{g}\Theta_{01}-\frac{1}{g}\sum_{j>1}\Theta_{1j}.
\end{align*}
This implies that $\Theta_{01}=0$, which is a contradiction, since the component $\Q(1)_{01}$ in $\Gr^W_{-2}\h^\geom_{\cC'g,n}$ is generated by $\Theta_{01}$. Therefore, $\zeta_2^-$ does not lift to $\h^\geom_{\cC'g,n}$, nor does $\zeta_j^-$ for $3\leq j\leq n$. 
\end{proof}

\subsection{Proof of Theorem 2} It follows immediately from Proposition \ref{sections for one puncture case} that the hyperelliptic conjugates $J\circ s_1, \ldots, J\circ s_n$ of the sections $s_1, \ldots, s_n$ do not give rise to sections of 
the one-punctured universal hyperelliptic curve $\pi':\cC'_{\cH_{g,n/\C}}\to \cH_{g,n/\C}$. Therefore, by Theorem \ref{sections over C}, there are exactly $n-1$ sections of $\pi: \cC_{\cH_{g,n/\C}}\to \cH_{g,n/\C}$ that are disjoint from $s_1$, namely, $s_2, \ldots, s_n$.

\subsection{Nonexistence of sections of $\beta^o_{n}$ } We will show that there are no $\Sp(H)$-equivariant graded Lie algebra sections of $\beta^o_n$.  
\begin{proposition}\label{no section for punctured family}
If $g\geq 3$ and $n\geq 0$, then $\beta^o_n$ admits no $\Sp(H)$-equivariant graded Lie algebra sections. 
\end{proposition}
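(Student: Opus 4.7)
The plan is to extend the strategy used in the proof of Proposition~\ref{sections for one puncture case}, exploiting the fact that the kernel $\Gr^W_\bullet(\p^o/W_{-5})$ of $\beta^o_n$ contributes $n$ distinct $\Q(1)$-classes in weight $-2$ (one for each removed puncture), rather than the single $\Q(1)_{01}$ appearing in the one-puncture kernel $\Gr^W_\bullet(\p'/W_{-5})$. This extra room is what allows us to obstruct even the candidates $\zeta_j^+$ that survived the $\beta'_n$ test. The case $n=0$ is immediate from Theorem~\ref{weight -1 action}: since no tautological section is removed when $n=0$, the stack $\cH_{g,1}$ coincides with $\cC_{\cH_g}$ and $\beta^o_0=\bar\beta_0$, which has no $\Sp(H)$-equivariant section.

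Now assume $n\geq 1$ and let $s:\h^\geom_{g,n}\to \h^\geom_{g,n+1}$ be an $\Sp(H)$-equivariant graded Lie algebra section of $\beta^o_n$. The open immersion $\cH_{g,n+1}\hookrightarrow \cC_{\cH_{g,n}}$ induces a natural $\Sp(H)$-equivariant homomorphism $\h^\geom_{g,n+1}\to \h^\geom_{\cC_{g,n}}$ over $\h^\geom_{g,n}$, so composing $s$ with it yields a section of $\bar\beta_n$. By Theorem~\ref{weight -1 action}, this composite equals $\zeta_j^\epsilon$ for some $j\in\{1,\ldots,n\}$ and $\epsilon\in\{+,-\}$; equivalently, $\Gr^W_{-1}s$ is determined by $u^{(i)}\mapsto u^{(i)}+\epsilon\,\delta_{ij}\,u^{(0)}$.

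The heart of the argument is to compute $\Gr^W_{-2}s(\Theta_j)\in\Gr^W_{-2}\h^\geom_{g,n+1}$ in two ways. Applying the Hain pure braid relation $\Theta_j=-\frac{1}{g}\sum_{i\neq j,\,1\leq i\leq n}\Theta_{ji}$ valid in $\p_{g,n}\subset \h^\geom_{g,n}$ and then $s$ term-by-term produces one expression involving only $\Theta_{ji}$ and $\Theta_{0i}$ with $i\neq j$. Computing $s(\sum_l[a_l^{(j)},b_l^{(j)}])$ directly as a Lie algebra homomorphism and using the symmetry $[u^{(j)},v^{(0)}]=[u^{(0)},v^{(j)}]$ to combine the cross terms yields $\Theta_j+2\epsilon\,\Theta_{0j}+\Theta_0$, and substituting the analogous relations $\Theta_j=-\frac{1}{g}\Theta_{0j}-\frac{1}{g}\sum_{i\neq j,\,1\leq i\leq n}\Theta_{ji}$ and $\Theta_0=-\frac{1}{g}\sum_{i=1}^n\Theta_{0i}$ valid in $\p_{g,n+1}\subset \h^\geom_{g,n+1}$ produces the second expression.

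Equating the two expressions and reading off the coefficient of $\Theta_{0j}$ yields a scalar equation $c_\epsilon\,\Theta_{0j}=0$ with $c_+=2-\tfrac{2}{g}$ and $c_-=-\bigl(2+\tfrac{2}{g}\bigr)$, both nonzero for $g\geq 3$. Since by Proposition~\ref{weight -1-2 parts} the class $\Theta_{0j}$ generates the nontrivial direct summand $\Q(1)_{0j}$ of $\Gr^W_{-2}\h^\geom_{g,n+1}$, this is a contradiction. The main obstacle is the careful bookkeeping of pure braid relations, which run over $n-1$ indices in $\h^\geom_{g,n}$ but over $n$ indices (including the new index $0$) in $\h^\geom_{g,n+1}$; it is precisely this extra index that produces the nonzero $\Theta_{0j}$ contribution and hence the obstruction.
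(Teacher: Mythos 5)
Your proposal is correct and follows essentially the same argument as the paper: reduce to a section of $\bar\beta_n$ via Theorem \ref{weight -1 action}, then compute $\Gr^W_{-2}$ of the section applied to $\Theta_j$ in two ways (once via the pure braid relation in $\h^\geom_{g,n}$, once directly via the relations in $\h^\geom_{g,n+1}$) and compare the $\Theta_{0j}$-coefficients, obtaining $\tfrac{2g\mp 2}{g}\neq 0$ versus $0$. The only cosmetic differences are that you treat the cases $n=1$ and $n\geq 2$ uniformly and work with a general index $j$ rather than normalizing to $j=1$.
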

\begin{proof}
When $n=0$, $\beta^o_0 =\beta_{\cC 0}$, and so there is no section by Theorem \ref{weight -1 action}. Let $\zeta^o$ be an $\Sp(H)$-equivariant graded Lie algebra section of $\beta^o_n$.  Composing with the natural projection $\h^\geom_{g, n+1}\to \h^\geom_{\cC_{g,n}}$, we obtain an $\Sp(H)$-equivariant graded Lie algebra section of $\bar\beta_n$:
$$
\xymatrix@R=1em@C=2em{
\h^\geom_{g,n+1}\ar[r]^{\beta^o_{n}}\ar[d]&\h^\geom_{g,n}\ar@/^/[l]^{\zeta^o}\ar@{=}[d]\\
\h^\geom_{\cC g, n}\ar[r]_{\bar\beta_n}& \h^\geom_{g,n}.
}
$$
By Theorem \ref{weight -1 action}, $\Gr^W_{-1}\zeta^o$ is equal to one of the $\Gr^W_{-1}\zeta_j^\pm$. Without loss of generality, we may assume that $\Gr^W_{-1}\zeta^o =\Gr^W_{-1}\zeta_1^+$ or $\Gr^W_{-1}\zeta_1^-$:
$$
\Gr^W_{-1}\zeta^o: (u_1, \ldots, u_n)\mapsto(\pm u_1, u_1, \ldots, u_n). 
$$
When $n=1$, we have
$$
\Gr^W_{-2}\beta^o_1: \bigoplus_{j=0}^1\Lambda^2_0H_j\oplus\Q(1)_{01}\oplus H_1(\v^\geom_g)\to \Lambda^2_0H_1\oplus H_1(\v^\geom_g).
$$
Note that $\Theta_1 =\sum_{l=1}^g[a_l^{(1)}, b_l^{(1)}]$ is trivial in $\Gr^W_{-2}\h^\geom_{g,1}$, and so $\Gr^W_{-2}\zeta^o(\Theta_1)=0$. On the other hand, we have
\begin{align*} 
\Gr^W_{-2}\zeta^o(\Theta_1)      =& \Gr^W_{-2}\zeta^o(\sum_{l=1}^g[a_l^{(1)}, b_l^{(1)}])\\
					=& \sum_{l=1}^g[\Gr^W_{-1}\zeta^o(a_l^{(1)}), \Gr^W_{-1}\zeta^o(b_l^{(1)})]\\
					=& \sum_{l=1}^g[\pm a_l^{(0)} +a_l^{(1)}, \pm b_l^{(0)} + b_l^{(1)}]\\
					=&\Theta_0 +\Theta_1 \pm 2\Theta_{01}\\
					=&-\frac{1}{g}\Theta_{01} -\frac{1}{g}\Theta_{01}\pm 2\Theta_{01}\\
					=&\frac{-2 \pm2g}{g}\Theta_{01},
\end{align*}
which is nontrivial for $g\geq 3$. Therefore, we get a contradiction. 
Now assume that $n \geq 2$.  Then on $\Gr^W_{-2}$ we have
$$\xymatrix@R=1em@C=2em{
0\ar[r]& \Lambda^2_0H_0\oplus \bigoplus_{j=1}^n\Q(1)_{0j}\ar[r]& \bigoplus_{j=0}^n\Lambda^2_0H_j \oplus\bigoplus_{0\leq i<j\leq n}\Q(1)_{ij}\oplus H_1(\v^\geom_g)\ar[r]^-{\Gr^W_{-2}\beta^o_n}&&&\\
&&\bigoplus_{j=1}^n\Lambda^2_0H_j\oplus \bigoplus_{1\leq i<j\leq n}\Q(1)_{ij}\oplus H_1(\v^\geom_g)\ar[r]& 0.
}$$
For $0\leq i<j\leq n$, denote the projection onto the $ij$th copy of $\Q(1)$ by $q_{ij}$: $\Gr^W_{-2}\h^\geom_{g, n+1}\to \Q(1)_{ij}$. 
Since $\zeta^o$ is a graded Lie algebra section, we have
\begin{align*}
\Gr^W_{-2}\zeta^o(\Theta_1)      =&\sum_{l=1}^g[\Gr^W_{-1}\zeta^o(a_l^{(1)}), \Gr^W_{-1}\zeta^o(b_l^{(1)})]\\
					=&\sum_{l=1}^g[\pm a_l^{(0)}+a_l^{(1)}, \pm b_l^{(0)} + b_l^{(1)}]\\
				        =&\Theta_0 +\Theta_1 \pm 2\Theta_{01}\\
				        =&-\frac{1}{g}\sum_{j\not=0}\Theta_{0j}-\frac{1}{g}\sum_{j\not= 1}\Theta_{1j} \pm 2\Theta_{01}\\
				        =&\frac{-2\pm2g}{g}\Theta_{01} -\frac{1}{g}\sum_{j>1}(\Theta_{0j} +\Theta_{1j}).		
\end{align*}
Therefore, we have $(q_{01}\circ \Gr^W_{-2}\zeta^o)(\Theta_1) = \frac{-2\pm2g}{g}\Theta_{01}$. On the other hand, in $\Gr^W_{-2}\h^\geom_{g,n}$, $\Theta_1 = -\frac{1}{g}\sum_{j > 1}\Theta_{1j}$. Therefore, we have
\begin{align*}
\Gr^W_{-2}\zeta^o(\Theta_1) 	=& -\frac{1}{g}\sum_{j>1}\Gr^W_{-2}\zeta^o(\Theta_{1j})\\
					=& -\frac{1}{g}\sum_{j> 1}\sum_{l=1}^g[\Gr^W_{-1}\zeta^o(a_l^{(1)}), \Gr^W_{-1}\zeta^o(b_l^{(j)})])\\
					=& -\frac{1}{g}\sum_{j>1}\sum_{l=1}^g[\pm a_l^{(0)}+a_l^{(1)}, b_l^{(j)}]\\
					=& -\frac{1}{g}\sum_{j>1}(\pm \Theta_{0j} +\Theta_{1j}).
\end{align*}
Hence, $(q_{01}\circ \Gr^W_{-2}\zeta^o)(\Theta_1) =0$, which is a contradiction. Therefore, our claim follows. 
\end{proof}
\subsection{Proof of Theorem 3}
\begin{lemma}\label{weight fil on ab splits}
Suppose that $g\geq 2$ and $n\geq 1$. 
The exact sequence
$$
H_1(\p_{g,n})\to H_1(\v^\geom_{g,n})\to H_1(\v^\geom_g)\to 0.
$$
is left exact. Moreover, the weight filtration on $H_1(\v^\geom_{g,n})$ canonically splits over $\Q$ and hence there is a natural $\Sp(H)$-equivariant isomorphism of MHS
$$
H_1(\v^\geom_{g,n}) \cong H_1(\p_{g,n})\oplus H_1(\v^\geom_g).
$$
\end{lemma}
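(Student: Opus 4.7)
My plan is to deduce both statements from the short exact sequence of MHSs
\[
0 \to \p_{g,n} \to \v^\geom_{g,n} \to \v^\geom_g \to 0
\]
provided by Proposition \ref{commu diagram for lie alg}, using only weight considerations (and strictness of MHS morphisms) to bypass any heavy computation. The proof should hold word-for-word with $m \geq 1$ inserted throughout, but I will suppress $[m]$.

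For left exactness, I would apply the five-term exact sequence in continuous Lie algebra homology to the above extension:
\[
H_2(\v^\geom_g) \to \bigl(H_1(\p_{g,n})\bigr)_{\v^\geom_g} \to H_1(\v^\geom_{g,n}) \to H_1(\v^\geom_g) \to 0.
\]
All arrows are morphisms of MHS, so I would verify the two inputs purely by weights. First, Corollary \ref{abelian weight -2} gives $\v^\geom_g = W_{-2}\v^\geom_g$; hence the adjoint action $\v^\geom_g \otimes H_1(\p_{g,n}) \to H_1(\p_{g,n})$ has source of weights $\leq -3$, whereas by Proposition \ref{ab of pgn} the target $H_1(\p_{g,n})$ is pure of weight $-1$. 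A morphism of MHS between objects whose weights are disjoint is zero, so the action is trivial and the coinvariants equal $H_1(\p_{g,n})$ itself. Second, since $\v^\geom_g$ lives in weights $\leq -2$, the Koszul differential forces $H_2(\v^\geom_g)$ to have weights $\leq -4$, so the map $H_2(\v^\geom_g) \to H_1(\p_{g,n})$ is again zero for the same reason. This proves the injectivity of $H_1(\p_{g,n}) \hookrightarrow H_1(\v^\geom_{g,n})$.

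For the splitting, I would use strictness of morphisms of MHS applied to the surjection $f\colon H_1(\v^\geom_{g,n}) \twoheadrightarrow H_1(\v^\geom_g)$. Since $H_1(\v^\geom_g)$ is pure of weight $-2$ by Proposition \ref{pure weight -2}, strictness gives $f\bigl(W_{-2}H_1(\v^\geom_{g,n})\bigr) = H_1(\v^\geom_g)$. On the other hand, the kernel of $f|_{W_{-2}}$ equals $W_{-2}H_1(\v^\geom_{g,n}) \cap H_1(\p_{g,n}) \subseteq W_{-2}H_1(\p_{g,n})$, which is zero because $H_1(\p_{g,n})$ is pure of weight $-1$. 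Therefore $W_{-2}H_1(\v^\geom_{g,n})$ is a sub-MHS mapping isomorphically onto $H_1(\v^\geom_g)$, and its inverse gives a canonical section over $\Q$. Combined with the injection from the first part, this produces a natural $\Sp(H)$-equivariant isomorphism of MHS
\[
H_1(\v^\geom_{g,n}) \cong H_1(\p_{g,n}) \oplus W_{-2}H_1(\v^\geom_{g,n}) \cong H_1(\p_{g,n}) \oplus H_1(\v^\geom_g).
\]

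I do not foresee a serious obstacle: the whole argument is a weight-bookkeeping exercise enabled by the fact that the extension $0 \to \p_{g,n} \to \v^\geom_{g,n} \to \v^\geom_g \to 0$ separates cleanly into a weight $-1$ piece (the surface/configuration part) and a weight $\leq -2$ piece (the hyperelliptic mapping class part). The only point requiring mild care is that the adjoint action $\v^\geom_g \to \Der(\p_{g,n})$ is a morphism of MHS; this is part of the naturality of the MHS on relative completions \cite[Thm.~13.12]{hain_infini} already invoked in the paper.
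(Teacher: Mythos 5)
Your proposal is correct, and the splitting argument is essentially the paper's: both of you exploit the fact that the kernel is pure of weight $-1$ while the quotient is pure of weight $-2$, so that $W_{-2}H_1(\v^\geom_{g,n})$ is a canonical complement (the paper phrases this as the retraction $H_1(\v^\geom_{g,n})\to\Gr^W_{-1}H_1(\v^\geom_{g,n})\cong H_1(\p_{g,n})$, you phrase it as the section $H_1(\v^\geom_g)\cong W_{-2}H_1(\v^\geom_{g,n})\hookrightarrow H_1(\v^\geom_{g,n})$; these give the same decomposition). Where you genuinely diverge is the left-exactness. The paper uses the center-freeness of $\p_{g,n}$ (from \cite{ntu}): the adjoint map $\p_{g,n}\to W_{-1}\Der\p_{g,n}$ is injective, hence so is its $\Gr^W_{-1}$ part, and since $H_1(\p_{g,n})=\Gr^W_{-1}\p_{g,n}$ this injection factors through $H_1(\v^\geom_{g,n})$. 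You instead run the five-term exact sequence of the Hochschild--Serre spectral sequence for $0\to\p_{g,n}\to\v^\geom_{g,n}\to\v^\geom_g\to 0$ and kill both the coinvariants correction and the transgression from $H_2(\v^\geom_g)$ by weight disjointness, using $\v^\geom_g=W_{-2}\v^\geom_g$ from Corollary \ref{abelian weight -2}. Your route is arguably more systematic and avoids invoking faithfulness of the outer action on $\p_{g,n}$, but it does require knowing that the low-degree exact sequence for continuous homology of pronilpotent Lie algebras lives in the category of (pro-)MHS, i.e.\ that the differentials (in particular the transgression $H_2(\v^\geom_g)\to(H_1(\p_{g,n}))_{\v^\geom_g}$) are morphisms of MHS; this is standard in Hain's framework but should be stated, since without it the ``disjoint weights implies zero'' step has nothing to apply to. The paper's route trades that for an input it already uses elsewhere (center-freeness, which also underlies the exactness of the Lie algebra sequence itself). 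Both are sound.
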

\begin{proof}
Consider the exact sequence
$$
H_1(\p_{g,n})\to H_1(\v^\geom_{g,n})\to H_1(\v^\geom_g)\to 0
$$
induced from the exact sequence $0\to\p_{g,n}\to \v^\geom_{g,n}\to \v^\geom_g\to 0$. 
The center-freeness of $\p_{g,n}$ implies that the inner adjoint map $\p_{g,n}\to\v^\geom_{g,n}\to W_{-1}\Der \p_{g,n}$ is injective. Since $\Gr^W_\bullet$ is an exact functor and $H_1(\p_{g,n})$ is pure of weight $-1$ by Proposition \ref{ab of pgn}, the composition homomorphism 
$$
H_1(\p_{g,n})=\Gr^W_{-1}\p_{g,n}\to \Gr^W_{-1}\v^\geom_{g,n}\to \Gr^W_{-1}\Der \p_{g,n}
$$
is an injection. This composition agrees with 
$$
H_1(\p_{g,n})\to H_1(\v^\geom_{g,n})\to H_1(W_{-1}\Der \p_{g,n})\to \Gr^W_{-1}H_1(W_{-1}\Der \p_{g,n})\cong \Gr^W_{-1}\Der\p_{g,n}.
$$
Therefore, the map $H_1(\p_{g,n})\to H_1(\v^\geom_{g,n})$ is injective. 
On the other hand, we have the surjection $q: \v^\geom_{g,n}\to \Gr^W_{-1}\v^\geom_{g,n} = \Gr^W_{-1}H_1(\v^\geom_{g,n})$. Since $H_1(\v^\geom_g)$ is pure of weight $-2$ by Proposition \ref{pure weight -2}, it follows that there is a canonical isomorphism
$ H_1(\p_{g,n})=\Gr^W_{-1}H_1(\p_{g,n})\cong\Gr^W_{-1}H_1(\v^\geom_{g,n})$ of MHSs. Hence $q$ induces the  projection $H_1(\v^\geom_{g,n})\to H_1(\p_{g,n})$, which gives the canonical splitting of the weight filtration on $H_1(\v^\geom_{g,n})$ over $\Q$:
$$
H_1(\v^\geom_{g,n})\cong\Gr^W_\bullet H_1(\v^\geom_{g,n}) = H_1(\p_{g,n})\oplus H_1(\v^\geom_g).
$$
\end{proof}

\indent Now, consider the homotopy exact sequence 
 \begin{equation}\label{alg homotopy seq}
 1\to \pi_1^\alg(C^o, \bar x_0)\to \pi_1^\alg(\cH_{g,n+1/\C}, \etabar_{n+1})\to \pi^\alg_1(\cH_{g,n/\C}, \etabar_n)\to 1,
 \end{equation}
 which can be identified with the profinite Birman exact sequence (\ref{pro hyp birman seq}) for the hyperelliptic mapping class groups. Suppose that the natural projection $\widehat{\pi^o_\ast}: \pi_1^\alg(\cH_{g,n+1/\C}, \etabar_{n+1})\to \pi^\alg_1(\cH_{g,n/\C}, \etabar_n)$ admits a section $\gamma$. The continuous relative completion of $\pi_1^\alg(\cH_{g,n/\C}, \etabar_n)$ with respect to the homomorphism $\rho^\hyp_\ell$ defined in \S \ref{cont hyp rep} is given by $\cD^\geom_{g,n/\Ql}:=\cD^\geom_{g,n}\otimes \Ql$ and hence its Lie algebra is given by $\d^\geom_{g,n/\Ql}:=\d^\geom_{g,n}\otimes \Ql$. Similarly its prounipotent radical is given by $\cV^\geom_{g,n/\Ql}:=\cV^\geom_{g,n}\otimes \Ql$ and its Lie algebra is given by $\v^\geom_{g,n/\Ql}:=\v^\geom_{g,n}\otimes\Ql$.  Since the weight filtration on $\d^\geom_{g,n}$ is defined over $\Q$, it lifts to $\d^\geom_{g,n/\Ql}$. Applying continuous relative completion to $\widehat{\pi^o_\ast}$, we obtain the induced Lie algebra surjection $d\pi^o_{\ast\Ql}:= d\pi^o_\ast\otimes\Ql: \v^\geom_{g,n+1/\Ql}\to \v^\geom_{g,n/\Ql}$. Fix a splitting $\cD^\geom_{g,n+1}\cong\cV^\geom_{g,n+1}\rtimes\Sp(H)$. Then it defines a splitting $\cD^\geom_{g,n}\cong \cV^\geom_{g,n}\rtimes \Sp(H)$ that is compatible with the projection $\cD^\geom_{g,n+1}\to \cD^\geom_{g,n}$. This defines an $\Sp(H_\Ql)$-module structure on $\v^\geom_{g,n/\Ql}$ and  $d\pi^o_{\ast\Ql}$ is considered as an $\Sp(H_\Ql)$-equivariant Lie algebra surjection.  
 Furthermore, the section $\gamma$ induces a Lie algebra section $d\gamma_\Ql$ of $d\pi^o_{\ast\Ql}$. We see that $d\gamma_\Ql$ induces an $\Sp(H_\Ql)$-equivariant section $d\gamma^\ab_\Ql$ of $d\pi^{o,\ab}_{\ast\Ql}: H_1(\v^\geom_{g,n+1/\Ql})\to H_1(\v^\geom_{g,n/\Ql})$ that is compatible with the projection onto $H_1(\v^\geom_{g/\Ql})$. Denote $\p_{g,n}\otimes \Ql$ by $\p_{g,n/\Ql}$. 
By Lemma \ref{weight fil on ab splits},  there is a natural $\Sp(H_\Ql)$-equivariant isomorphism $H_1(\v^\geom_{g,n/\Ql})\cong H_1(\p_{g,n/\Ql})\oplus H_1(\v^\geom_{g/\Ql})$, and by Proposition \ref{ab of pgn}, we have $H_1(\p_{g,n/\Ql})\cong H_\Ql^{\oplus n}$.  Then $d\pi^o_\ast$ can be described as
$$
H_\Ql^{\oplus n+1}\oplus H_1(\v^\geom_{g/\Ql})\to H_\Ql^{\oplus n}\oplus H_1(\v^\geom_{g/\Ql}): (u_0, u_1, \ldots, u_n, v)\mapsto (u_1, \ldots, u_n, v).
$$
On the other hand, we may consider $d\gamma^\ab_\Ql$ as an $\Sp(H_\Ql)$-module map
$$
H_\Ql^{\oplus n}\oplus H_1(\v^\geom_{g/\Ql})\to H_\Ql^{\oplus n+1}\oplus H_1(\v^\geom_{g/\Ql}).
$$
 Since $d\gamma^\ab_\Ql$ is an $\Sp(H_\Ql)$-equivariant section of $d\pi^{o,\ab}_{\ast\Ql}$, the image of the restriction of $d\gamma^\ab_\Ql$ to $H_\Ql^{\oplus n}$ is contained in  $H_\Ql^{\oplus n+1}$.
 Furthermore, it follows from Theorem \ref{rel comp iso for sp} and Proposition \ref{tanaka's computation} that  the restriction of $d\gamma^\ab_\Ql$ to $H_1(\v^\geom_{g/\Ql})$ is the identity map. Hence $d\gamma^\ab_\Ql$ is graded with respect to the weight filtration:
 $$
 d\gamma^\ab_\Ql = \Gr^W_\bullet d\gamma^\ab_\Ql: \Gr^W_\bullet H_1(\v^\geom_{g,n/\Ql})\to \Gr^W_\bullet H_1(\v^\geom_{g, n+1/\Ql}).
 $$
\indent Denote the lower central series of $\v^\geom_{g,n}$ by $L^\bullet\v^\geom_{g,n}$. Since it is defined over $\Q$, it lifts to the lower central series of $\v^\geom_{g,n/\Ql}$. Since $d\gamma_\Ql$ preserves the lower central series, it induces a graded Lie algebra homomorphism $\Gr_L^\bullet d\gamma_\Ql: \Gr_L^\bullet\v^\geom_{g,n/\Ql}\to \Gr_L^\bullet\v^\geom_{g,n+1/\Ql}$. Moreover, the map $d\gamma^\ab_\Ql$ induces a Lie algebra homomorphism of free Lie algebras
$$
\L(d\gamma^\ab_\Ql): \L(H_1(\v^\geom_{g,n/\Ql}))\to \L(H_1(\v^\geom_{g,n+1/\Ql})).
$$
For $n\geq 0$, let $\epsilon_n:H_1(\v^\geom_{g,n/\Ql})=\Gr_L^{1}\v^\geom_{g,n/\Ql}\hookrightarrow \Gr_L^\bullet\v^\geom_{g,n/\Ql}$ be the canonical inclusion. Then $\epsilon_n$ induces an $\Sp(H_\Ql)$-equivariant graded Lie algebra surjection 
$$
\L(\epsilon_n): \L(H_1(\v^\geom_{g,n/\Ql}))\to \Gr_L^\bullet\v^\geom_{g,n/\Ql}
$$
that makes the diagram
$$
\xymatrix@R=2em@C=4em{
\L(H_1(\v^\geom_{g,n+1/\Ql}))/\ker\L(\epsilon_{n+1}) \ar[r]_-\cong^-{\overline{\L(\epsilon_{n+1})}}& \Gr_L^\bullet\v^\geom_{g,n+1/\Ql}\\
\L(H_1(\v^\geom_{g,n/\Ql}))/\ker\L(\epsilon_{n})\ar[u]^{\overline{\L(d\gamma^\ab_\Ql)}}\ar[r]_-\cong^-{\overline{\L(\epsilon_{n})}}& \Gr_L^\bullet\v^\geom_{g,n/\Ql}\ar[u]_{\Gr_L^\bullet d\gamma_\Ql},
}
$$ 
 commute, where  $\overline{\L(d\gamma^\ab_\Ql)}$, $\overline{\L(\epsilon_{n})}$, and $\overline{\L(\epsilon_{n+1})}$ are induced by $\L(d\gamma^\ab_\Ql)$, $\L(\epsilon_{n})$, and $\L(\epsilon_{n+1})$, respectively.
 Note that the free Lie algebra $\L(H_1(\v^\geom_{g,n/\Ql}))$ admits a weight filtration induced by $W_\bullet H_1(\v^\geom_{g,n/\Ql})$. Since $H_1(\v^\geom_{g,n/\Ql})= \Gr^W_\bullet H_1(\v^\geom_{g,n/\Ql})$, the Lie algebra $\Gr_L^\bullet \v^\geom_{g,n/\Ql}$ is also graded by weights: $\Gr_L^\bullet\v^\geom_{g,n/\Ql} =\Gr^W_\bullet\v^\geom_{g,n/\Ql}$.  Since the bracket of $\v^\geom_{g,n}$ is a morphism of MHS, the $j$th component map $\L_j(\epsilon_n): \L_j(H_1(\v^\geom_{g,n/\Ql}))\to \Gr_L^j\v^\geom_{g,n/\Ql}$ is a weight filtration-preserving map, and so is $\overline{\L(\epsilon_{n})}$. Since $d\gamma^\ab_\Ql = \Gr^W_\bullet d\gamma^\ab_\Ql$, the induced map $\L(d\gamma^\ab_\Ql)$ preserves the weight filtration induced on the free Lie algebras, and so does $\overline{\L(d\gamma^\ab_\Ql)}$.  Therefore, $\Gr_L^\bullet d\gamma_\Ql$ preserves the weight filtration, and thus it induces an $\Sp(H_\Ql)$-equivariant graded Lie algebra homomorphism $\widetilde{d\gamma_\Ql}:=\Gr^W_\bullet\Gr^\bullet_L d\gamma_\Ql: \Gr^W_\bullet\v^\geom_{g,n/\Ql}\to \Gr^W_\bullet\v^\geom_{g,n+1/\Ql}$. It is a graded Lie algebra section of $\Gr^W_\bullet d\pi^o_{\ast\Ql}$.   
 It follows from Proposition \ref{no section for punctured family} that there is no such section of $\Gr^W_\bullet d\pi^o_{\ast\Ql}$. Therefore, the sequence (\ref{alg homotopy seq}) does not split. \\
 \subsection{Proof of Corollary \ref{hyp birman seq not split}}
 The profinite completion of the hyperelliptic Birman exact sequence 
 $$
 1\to \pi_1(S_{g,n})\to \Delta_{g,n+1}\to \Delta_{g,n}\to 1
 $$
 can be identified with the sequence (\ref{alg homotopy seq}).  Since a section of $\Delta_{g, n+1}\to \Delta_{g,n}$ induces a section of $\widehat{\Delta_{g,n+1}}\to \widehat{\Delta_{g,n}}$ by profinite completion, the nonsplitting of the sequence (\ref{alg homotopy seq}) implies that the hyperelliptic Birman exact sequence does not split, either. 
 

\end{document}